\numberwithin{equation}{section}
\newtheorem{thm}{Theorem}[section]
\newtheorem{pr}[thm]{Proposition}
\newtheorem{lm}[thm]{Lemma}
\newtheorem{re}[thm]{Remark}
\newtheorem{df}[thm]{Definition}
\newtheorem{ex}[thm]{Example}
\newtheorem{cor}[thm]{Corollary}
\newtheorem{con}[thm]{Conjecture}
\newtheoremstyle{case}{}{}{}{}{}{:}{ }{}
\theoremstyle{case}
\newtheorem{case}{Case}
\newtheoremstyle{caso}{}{}{}{}{}{:}{ }{}
\theoremstyle{caso}
\newtheorem{caso}{Case}
\DeclareRobustCommand{\svdots}{
  \vbox{%
    \baselineskip=0.33333\normalbaselineskip
    \lineskiplimit=0pt
    \hbox{.}\hbox{.}\hbox{.}%
    \kern-0.2\baselineskip
  }%
}
\theoremstyle{remark}
\newcommand{\lcm}{\text{lcm}}
\newcommand{\R}{\mathbb{R}}
\newcommand{\stirlingone}[2]{\genfrac{[}{]}{0pt}{}{#1}{#2}}
\newcommand{\floor}[1]{\left\lfloor #1 \right\rfloor}
\theoremstyle{remark}
\newcommand\numberthis{\addtocounter{equation}{1}\tag{\theequation}}
\newenvironment{proof*}[1]
  {%
   \begin{proof}}
  {\end{proof}}
\let\@@pmod\pmod
\DeclareRobustCommand{\pmod}{\@ifstar\@pmods\@@pmod}
\def\@pmods#1{\mkern4mu({\operator@font mod}\mkern 6mu#1)}
\title[Beyond the log-concavity of the restricted partition function]{Restricted partition functions and the $r$-log-concavity of quasi-polynomial-like functions}
\author{Krystian Gajdzica}
\address{Institute of Mathematics \\
	Faculty of Mathematics and Computer Science \\
	Jagiellonian University in Cracow
}
\email{krystian.gajdzica@doctoral.uj.edu.pl}
\keywords{integer partition; restricted partition function; strong log-concavity; $r$-log-concavity; quasi-polynomial.}
\subjclass[2020]{Primary 11P82, 11P84; Secondary 05A17.}
\begin{document}

\setlength{\parindent}{10mm}
\maketitle

\begin{abstract}
Let $\mathcal{A}=\left(a_i\right)_{i=1}^\infty$ be a weakly increasing sequence of positive integers and let $k$ be a fixed positive integer. For an arbitrary integer $n$, the restricted partition $p_\mathcal{A}(n,k)$ enumerates all the partitions of $n$ whose parts belong to the multiset $\{a_1,a_2,\ldots,a_k\}$. In this paper we investigate some generalizations of the log-concavity of $p_\mathcal{A}(n,k)$. We deal with both some basic extensions like, for instance, the strong log-concavity and a more intriguing challenge that is the $r$-log-concavity of both quasi-polynomial-like functions in general, and the restricted partition function in particular. For each of the problems, we present an efficient solution.
    
\end{abstract}

\section{Introduction}

Let $k$ be a fixed positive integer and let $\mathcal{A}=\left(a_i\right)_{i=1}^\infty$ be a non-decreasing sequence of positive integers (in fact, it is enough to assume that $\mathcal{A}$ is a $k$-tuple).\linebreak A restricted partition $\lambda$ of a non-negative integer $n$ is a sequence of positive integers $\lambda_1,\lambda_2,\ldots,\lambda_j$ such that 
$$n=\lambda_1+\lambda_2+\cdots+\lambda_j,$$
and all of these numbers $\lambda_i$ belong to the multiset $\{a_1,a_2,\ldots,a_k\}$. These elements $\lambda_i$ are further called parts of the partition $\lambda$. Moreover, we also assume that two restricted partitions are considered the same if there is only a difference in the order of their parts. The restricted partition function $p_\mathcal{A}(n,k)$ counts all restricted partitions of $n$. For example, let $\mathcal{A}=(2,3,\textcolor{blue}{3},5,\textcolor{blue}{5},\textcolor{red}{5},7,\textcolor{blue}{7}.\ldots)$ be a sequence of consecutive prime numbers such that the $i$-th prime number appears in $i$ distinct colors. If $k=6$ and $n=7$, then we only take into account those restricted partitions of $7$ whose parts belong to $\{2,3,\textcolor{blue}{3},5,\textcolor{blue}{5},\textcolor{red}{5}\}$, that are: $(\textcolor{red}{5},2)$, $(\textcolor{blue}{5},2)$, $(5,2)$, $(\textcolor{blue}{3},2,2)$, and $(3,2,2)$. Thus, $p_\mathcal{A}(7,6)=5$. We may extend the definition of $p_\mathcal{A}(n,k)$ to all integers simply by setting $p_\mathcal{A}(n,k)=0$ if $n$ is negative. It is also worth noting that $p_\mathcal{A}(0,k)=1$ for every $k\in\mathbb{N}_+$, because of the empty sequence $\lambda=()$. Moreover, if we consider the sequence of consecutive positive integers $\mathcal{A}_1=(1,2,3,4,\ldots)$ and allow `$k=\infty$', then we obtain the well-known partition function $p(n)=p_{\mathcal{A}_1}(n,\infty)$. In $1748$ Euler discovered the generating function for $p(n)$, that is
\begin{align*}
\sum_{n=0}^\infty p(n)x^n=\prod_{i=1}^\infty\frac{1}{1-x^i}.
\end{align*}
For the function $p_\mathcal{A}(n,k)$, the corresponding generating function takes the form
\begin{equation}\label{eq1.1}
    \sum_{n=0}^\infty p_\mathcal{A}(n,k)x^n=\prod_{i=1}^k\frac{1}{1-x^{a_i}}.
\end{equation}

There is an abundance of literature devoted to the restricted partition function. For arithmetic properties of $p_\mathcal{A}(n,k)$, we refer the reader to \cite{KG1, RS, MU}. On the other hand, for some information about the asymptotic behavior we encourage to see \cite{GA, CN, DV}.

The main goal of this manuscript is to generalize some results recently obtained by the author in \cite{KG2}. There is an efficient criterion for the log-concavity of the restricted partition function. Let us recall that a sequence $(w_i)_{i=0}^\infty$ of real numbers is log-concave, if the inequality
\begin{align*}
    w_n^2>w_{n+1}w_{n-1} 
\end{align*}
holds for all $n\geqslant 1$. The main result of the aforementioned paper is the following.

\begin{thm}\label{log3}
Let $\mathcal{A}=\left(a_i\right)_{i=1}^\infty$ be a weakly increasing sequence of positive integers, and let $k\in\mathbb{N}_{\geqslant4}$ be fixed. Suppose further that $\gcd A=1$ for all $(k-2)$-multisubsets $A$ of $\{a_1,a_2,\ldots,a_k\}$. If we have that $k>4$, then both
\begin{align}\label{logi2}
    p_\mathcal{A}^2(n,k)>\left(1+\frac{1}{n^2}\right)p_\mathcal{A}(n+1,k)p_\mathcal{A}(n-1,k)
\end{align}
and
\begin{align}\label{logi1}
    p_\mathcal{A}^2(n,k)>p_\mathcal{A}(n+1,k)p_\mathcal{A}(n-1,k)
\end{align}
hold for all $n\geqslant\frac{2}{k}\prod_{i=1}^k(1+iDk)$, where $D=\lcm{(a_1,a_2,\ldots,a_k)}$. For $k=4$, (\ref{logi1}) remains valid for all such $n$ while (\ref{logi2}) is true for each $n\geqslant\frac{3}{k}\prod_{i=1}^k(1+iDk)$. Additionally, for the constant sequence $\mathcal{A}=(1,\textcolor{blue}{1},\textcolor{red}{1},\ldots)$, we have that (\ref{logi1}) is satisfied for all positive integers $n$ and $k\geqslant2$; and (\ref{logi2}) is true for any integers $k\geqslant3$ and $n\geqslant\frac{k}{k-2}$. 
\end{thm}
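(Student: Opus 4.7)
The plan is to exploit the quasi-polynomial structure of $p_\mathcal{A}(n,k)$ afforded by \eqref{eq1.1}, together with the hypothesis on $(k-2)$-multisubsets, in order to reduce both \eqref{logi2} and \eqref{logi1} to a polynomial estimate plus a small periodic error. The generating function $\prod_{i=1}^k (1-x^{a_i})^{-1}$ has a pole of order $k$ at $x=1$ and, at every primitive $d$-th root of unity with $d\mid D$ and $d>1$, a pole of order $\#\{i : d\mid a_i\}$. The hypothesis forces $\#\{i : p\mid a_i\}\leqslant k-3$ for every prime $p$, so every non-trivial pole has order at most $k-3$. A partial-fraction expansion then yields a splitting
\[
p_\mathcal{A}(n,k) \;=\; P(n) + E(n), \qquad P(n) = c_{k-1}n^{k-1} + c_{k-2}n^{k-2} + c_{k-3}n^{k-3},
\]
where $c_{k-1} = 1/\bigl((k-1)!\,a_1\cdots a_k\bigr)>0$ and $E(n)$ is a quasi-polynomial of degree at most $k-4$ whose periodic coefficients are explicitly controlled by the residues at the non-trivial poles, hence ultimately by $D$ and the $a_i$.

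Next, a direct expansion gives
\[
P(n)^2 - \bigl(1+n^{-2}\bigr)P(n+1)P(n-1) \;=\; (k-2)c_{k-1}^2\,n^{2k-4} + O\bigl(n^{2k-5}\bigr),
\]
in which the main coefficient $(k-2)c_{k-1}^2$ is strictly positive for $k\geqslant 3$; the analogous computation without the $n^{-2}$ correction yields $(k-1)c_{k-1}^2 n^{2k-4}$, which is only more favourable. Thus both \eqref{logi2} and \eqref{logi1} reduce to dominating the cross-contributions from $E(n)$, which are of size $O(n^{2k-5})$, by the main positive polynomial term.

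Bounding those cross-contributions via the residue estimates and solving the resulting inequality explicitly then gives the threshold $n\geqslant \tfrac{2}{k}\prod_{i=1}^k (1+iDk)$ when $k\geqslant 5$. For $k=4$ the polynomial $P$ has degree only $3$, and the $n^{-2}$ correction in \eqref{logi2} competes with the $c_{k-3}$-term, forcing the larger constant $\tfrac{3}{k}$; the ordinary log-concavity \eqref{logi1} avoids that correction and keeps the smaller bound. For the constant sequence $\mathcal{A}=(1,1,1,\ldots)$ one has $p_\mathcal{A}(n,k)=\binom{n+k-1}{k-1}$, so $E\equiv 0$ and both inequalities collapse to elementary rational inequalities in $n$ and $k$ that can be verified by hand.

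The main obstacle I anticipate is the last threshold step: producing the explicit bound $\tfrac{2}{k}\prod_{i=1}^k(1+iDk)$, rather than an unquantified asymptotic statement, requires tracking the partial-fraction residues uniformly in $d\mid D$ and matching the resulting estimates to the combinatorial shape of that product; the delicate case $k=4$ and the discrepancy between the two inequalities then have to be teased out of the same machinery.
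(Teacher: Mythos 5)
Your overall strategy is the right one and is essentially the method underlying this theorem (which, note, is quoted here from \cite{KG2} without proof; the closest argument actually on display in this paper is the proof of Theorem \ref{thm1}). The gcd hypothesis on $(k-2)$-multisubsets does force every pole of the generating function away from $x=1$ to have order at most $k-3$, so $p_\mathcal{A}(n,k)=c_{k-1}n^{k-1}+c_{k-2}n^{k-2}+c_{k-3}n^{k-3}+E(n)$ with $E$ a quasi-polynomial of degree at most $k-4$, and your leading-coefficient computations check out: the expression for (\ref{logi2}) has top term $(k-2)c_{k-1}^2n^{2k-4}$ and that for (\ref{logi1}) has top term $(k-1)c_{k-1}^2n^{2k-4}$, both positive, with all remaining contributions of order $n^{2k-5}$. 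This much proves that both inequalities hold for all sufficiently large $n$.

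The gap is exactly where you flag it, and it is not a minor technicality: the content of the theorem is the explicit threshold $\frac{2}{k}\prod_{i=1}^k(1+iDk)$ (and $\frac{3}{k}\prod_{i=1}^k(1+iDk)$ for (\ref{logi2}) when $k=4$), and the step ``bounding those cross-contributions via the residue estimates'' is left entirely unexecuted. Tracking the residues at all primitive $d$-th roots of unity uniformly in $d\mid D$ is genuinely delicate, and the way to avoid redoing it is to invoke Proposition \ref{Proposition4} with $j=k-2$, which packages that bookkeeping into the single uniform bound $|E(n)|<Fn^{k-4}$ for all $n>0$, where $F=\prod_{i=1}^k(1+iDk)/\bigl(k!\prod_{i=1}^ka_i\bigr)$. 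Since $F/c_{k-1}=\frac{1}{k}\prod_{i=1}^k(1+iDk)$ is exactly half the stated threshold, one then sandwiches $p_\mathcal{A}(n,k)$ between the explicit polynomials $c_{k-1}n^{k-1}+c_{k-2}n^{k-2}+c_{k-3}n^{k-3}\mp Fn^{k-4}$, divides the resulting inequality by $c_{k-1}^2n^{2k-8}$, and is left with an explicit quartic in $n$ whose positivity beyond the threshold is verified by elementary calculus (locate the larger root of the second derivative, then check positivity of the first derivative and of the quartic at the threshold), exactly as in the proof of Theorem \ref{thm1}; the $k=4$ discrepancy and the binomial-coefficient case $\mathcal{A}=(1,1,1,\ldots)$ fall out of the same explicit computation. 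As written, your argument establishes only the qualitative conclusion of Corollary \ref{log3cor}, not Theorem \ref{log3} itself.
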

The above criterion is optimal in a sense that it can not be extended for any other sequence $\mathcal{A}$ and parameter $k$. That is a consequence of \cite[Corollary 5.8]{KG2}:

\begin{cor}\label{log3cor}
Let $\mathcal{A}=\left(a_i\right)_{i=1}^\infty$ be an arbitrary non-decreasing sequence of positive integers. The sequence $\left(p_\mathcal{A}(n,k)\right)_{n=1}^\infty$ is eventually log-concave if and only if we have $k\geqslant2$ and $a_1=\cdots=a_k=1$ or $k\geqslant4$ and $\gcd A=1$ for all $(k-2)$-multisubsets $A$ of $\{a_1,a_2,\ldots,a_k\}$.
\end{cor}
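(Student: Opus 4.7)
My plan is to treat the two directions separately. The forward (``if'') direction follows immediately from Theorem \ref{log3}: both alternatives of the hypothesis are exactly the scope of that theorem, and inequality (\ref{logi1}) is precisely the strict log-concavity condition for all sufficiently large $n$. The content of the corollary therefore lies in the ``only if'' direction, which I would prove by contrapositive.

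The parameter settings outside the listed hypotheses fall into three groups: (a) $k = 1$; (b) $k \in \{2, 3\}$ with some $a_i > 1$; and (c) $k \geq 4$ with some $(k-2)$-multisubset $A \subseteq \{a_1, \ldots, a_k\}$ satisfying $\gcd A > 1$. Cases (a) and (b) I would handle by direct computation of $p_\mathcal{A}(n, k)$ from the partial-fraction expansion of (\ref{eq1.1}). For $k = 1$ with $a_1 = 1$, the sequence is constantly $1$ and fails strict log-concavity outright; for $k = 1$ with $a_1 > 1$, the sequence vanishes on every $n$ not divisible by $a_1$, trivially violating (\ref{logi1}) at those indices. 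For $k \in \{2, 3\}$ with not all $a_i$ equal to $1$, the constituent polynomials of the quasi-polynomial $p_\mathcal{A}(n, k)$ on distinct residues modulo $D = \lcm(a_1, \ldots, a_k)$ have differing lower-order coefficients, and an explicit computation produces a residue class along which (\ref{logi1}) fails for infinitely many $n$.

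Case (c) is the main obstacle. My strategy is to exploit the quasi-polynomial representation $p_\mathcal{A}(n, k) = P_{n \bmod D}(n)$, where each $P_j$ is a polynomial of degree $k-1$ sharing the leading coefficient $\alpha = 1/((k-1)!\, a_1 \cdots a_k)$. The lower-order coefficients of $P_j$ are governed by the residues of (\ref{eq1.1}) at roots of unity other than $1$: the pole order at a primitive $d$-th root of unity is $\#\{i : d \mid a_i\}$, so the failure of the $(k-2)$-gcd condition produces poles of order at least $k-2$ at some nontrivial root of unity, forcing the sub-leading coefficients of $P_j$ to be genuinely $j$-dependent. An asymptotic expansion of the log-concavity defect
\[
\delta(n) = 2 \log P_j(n) - \log P_{j-1}(n-1) - \log P_{j+1}(n+1)
\]
then shows that $\delta(n) \leq 0$ for all sufficiently large $n$ in at least one residue class modulo $D$ (specifically, a class corresponding to a local extremum of the oscillating constituent coefficient), so (\ref{logi1}) is violated along that arithmetic progression.

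The hard part of this program is case (c): tracking the contributions of each relevant pole of (\ref{eq1.1}) with enough precision to identify which coefficient of $P_j$ first begins to depend on $j$, and then verifying that the induced oscillation of $\delta(n)$ is in fact sign-reversing on some residue class. This computation is the content of \cite[Corollary 5.8]{KG2}; together with Theorem \ref{log3} for the forward direction, it yields the stated corollary.
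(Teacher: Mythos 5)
Your proposal is correct and, at bottom, coincides with the paper's treatment: the paper gives no proof of this corollary at all, presenting it as a direct quotation of \cite[Corollary 5.8]{KG2} together with Theorem \ref{log3}, which is exactly where your own argument (both the ``if'' direction and the hard case (c) of the converse) terminates. Your additional sketch of the converse --- selecting a residue class on which the first oscillating constituent coefficient $c_t$, $t\in\{k-3,k-2,k-1\}$, is minimal so that the second difference $2c_t(n)-c_t(n+1)-c_t(n-1)$ turns negative --- faithfully mirrors the technique the paper itself deploys later in the proofs of Theorem \ref{thm2} and Theorem \ref{r-log-3}, the only point left to \cite{KG2} being the verification that when $t=k-3$ this oscillation actually overwhelms the competing positive term $(k-1)\alpha^2 n^{2k-4}$ of the same order.
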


Now, we investigate some generalizations of Theorem \ref{log3}. Similar to DeSalvo and Pak \cite{DSP} (in the case of $p(n)$), we discuss the so-called \textit{strong log-concavity} of $p_\mathcal{A}(n,k)$, that are inequalities of the form
\begin{align*}
    p_\mathcal{A}^2(n,k)>p_\mathcal{A}(n+m,k)p_\mathcal{A}(n-m,k),
\end{align*}
for all fixed positive integers $m$. The authors prove the following.
\begin{thm}\label{DSP1}
For all $n>m>1$, we have
\begin{align*}
    p^2(n)>p(n+m)p(n-m).
\end{align*}
\end{thm}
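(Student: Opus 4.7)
The plan is to reduce the strong log-concavity inequality $p(n)^2 > p(n+m)\,p(n-m)$ to the ordinary log-concavity of $p(n)$ by a telescoping argument on consecutive ratios, and then verify the residual small-$n$ range by direct computation. Throughout, write $\rho_n := p(n+1)/p(n)$, so that ordinary log-concavity of $p$ at $n$ is equivalent to $\rho_{n-1} > \rho_n$.

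The first ingredient is the classical theorem of Nicolas, which gives $p(n)^2 > p(n-1)\,p(n+1)$ for every $n \geq 26$; consequently $(\rho_n)_{n \geq 25}$ is strictly decreasing. For any pair $(n,m)$ with $n - m \geq 25$ and $m \geq 1$, I would telescope
$$\frac{p(n)^2}{p(n+m)\,p(n-m)} \;=\; \prod_{j=1}^{m} \frac{\rho_{n-j}}{\rho_{n+j-1}}.$$
For each $j \in \{1,\ldots,m\}$ the indices satisfy $25 \leq n - j < n + j - 1$, so $\rho_{n-j} > \rho_{n+j-1}$ by monotonicity, and the whole product exceeds $1$. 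This settles the range $n \geq m + 25$ at once.

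The remaining pairs are those with $n - m \leq 24$ (and $n > m > 1$). Setting $\ell := n - m \in \{1,2,\ldots,24\}$, the inequality reads $p(m+\ell)^2 > p(2m+\ell)\,p(\ell)$ with $m \geq 2$. The Hardy--Ramanujan asymptotic $p(N) \sim e^{\pi\sqrt{2N/3}}/(4N\sqrt{3})$ yields
$$\log\frac{p(m+\ell)^2}{p(2m+\ell)\,p(\ell)} \;=\; \pi\bigl(2 - \sqrt{2}\bigr)\sqrt{\tfrac{2m}{3}} + O(\log m) \;\longrightarrow\; +\infty,$$
so, equipped with effective Lehmer-type upper and lower bounds for $p(N)$, one shows explicitly that the ratio exceeds $1$ once $m$ is larger than some threshold $m_\ell$. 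The finitely many residual cases, one bounded list for each of the $24$ values of $\ell$, are then verified by direct computation from a table of partition values.

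The main obstacle is this last step: making the asymptotic effective enough to leave only a tractable finite check, and then executing that check uniformly in $\ell$. By contrast, the telescoping argument is immediate once Nicolas' log-concavity is in hand. A clean alternative that sidesteps the asymptotics is to prove a strengthened log-concavity of the form $p(n)^2 \geq (1 + c_n)\,p(n-1)\,p(n+1)$ with $c_n$ large enough that the induced monotonicity of $\rho_n$ can be pushed down to small $n$; however, this shifts rather than eliminates the combinatorial difficulty.
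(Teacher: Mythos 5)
First, a point of orientation: the paper does not prove this statement at all. Theorem \ref{DSP1} is imported verbatim from DeSalvo and Pak \cite{DSP} as motivation for the author's own Theorem \ref{thm1}, so there is no in-paper proof to compare against; the nearest relative, Theorem \ref{thm1}, treats $p_\mathcal{A}(n,k)$ by sandwiching the quasi-polynomial between two explicit polynomials (Proposition \ref{Proposition4}) and comparing leading terms, a method that has no direct analogue for $p(n)$ itself. Your telescoping reduction is correct: the identity $p(n)^2/\bigl(p(n+m)p(n-m)\bigr)=\prod_{j=1}^{m}\rho_{n-j}/\rho_{n+j-1}$ checks out, each index pair satisfies $n-j<n+j-1$ with $n-j\geqslant n-m\geqslant 25$, and Nicolas' log-concavity for $n\geqslant 26$ makes every factor exceed $1$. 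This is in fact essentially how DeSalvo and Pak themselves derive the strong log-concavity from the ordinary one, so the first half of your argument is both correct and the standard route.

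The gap is the second half, and you have correctly identified it yourself: for $\ell=n-m\leqslant 24$ you give a plan, not a proof. The Hardy--Ramanujan heuristic $\pi(2-\sqrt{2})\sqrt{2m/3}+O(\log m)\to+\infty$ is right in spirit, but nothing in your write-up produces the thresholds $m_\ell$ or executes the finite check, and that is where all the residual labour of the theorem lives (effective two-sided Lehmer-type bounds on $p(N)$, applied uniformly over $24$ values of $\ell$, followed by a table verification). Until that is carried out, the inequality is only established for $n\geqslant m+25$. One further caution: the smallest cases (e.g.\ $n=3$, $m=2$, where $p(3)^2=9>p(5)p(1)=7$) sit right at the edge --- recall that for $m=1$ the analogous inequality actually fails at $n=3$ --- so the boundary verification cannot be waved away as obviously safe; it genuinely needs to be done.
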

It is worth noting that the above statement was firstly conjectured by Chen \cite{Chen} as well as the next one.

\begin{con}\label{DSP2}
For all positive integers $a$ and $b$ such that $a>b$, we have
\begin{align*}
    p^2(an)>p(an+bn)p(an-bn).
\end{align*}
\end{con}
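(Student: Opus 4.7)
The plan is to adapt the DeSalvo--Pak strategy, which establishes Theorem~\ref{DSP1} in the case $a=1$, to arbitrary $a>b$. The starting point is a sharp form of the Hardy--Ramanujan--Rademacher asymptotic, for instance Lehmer's
\begin{align*}
p(m)=\frac{1}{4m\sqrt{3}}\,e^{\mu(m)}\bigl(1+R(m)\bigr),\qquad \mu(m)=\pi\sqrt{\tfrac{2m}{3}},
\end{align*}
together with an explicit, monotonically decreasing bound on $|R(m)|$. Taking logarithms, the desired inequality
\begin{align*}
2\log p(an)-\log p(an+bn)-\log p(an-bn)>0
\end{align*}
splits into three contributions: the exponent, the polynomial prefactor, and the error term, and I would analyse them in turn.

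The exponent contributes
\begin{align*}
2\mu(an)-\mu(an+bn)-\mu(an-bn)=\pi\sqrt{\tfrac{2n}{3}}\,\bigl(2\sqrt a-\sqrt{a+b}-\sqrt{a-b}\bigr),
\end{align*}
which is positive by strict concavity of $\sqrt{\cdot}$. To make this quantitative I would use the identity $(2\sqrt a)^2-(\sqrt{a+b}+\sqrt{a-b})^2=2(a-\sqrt{a^2-b^2})$ together with $\sqrt{a+b}+\sqrt{a-b}\leqslant 2\sqrt a$, yielding the explicit lower bound
\begin{align*}
2\sqrt a-\sqrt{a+b}-\sqrt{a-b}\;\geqslant\;\frac{b^2}{4a^{3/2}}.
\end{align*}
The prefactor contributes the bounded negative constant $\tfrac12\log(1-b^2/a^2)$, independent of $n$, and the remainder piece is $O(1/\sqrt n)$ once $(a-b)n$ is sizeable. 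Combining these estimates gives an explicit threshold $n_0(a,b)$ beyond which the inequality holds, and the finite range $n<n_0(a,b)$ would be handled by direct computation or by non-asymptotic partition inequalities of Bessenrodt--Ono or Chen--Wang--Xie type.

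The main obstacle, in my view, is that Conjecture~\ref{DSP2} is \emph{uniform} in $a$ and $b$: when $b\sim a$ the concavity defect $b^2/(4a^{3/2})$ behaves like $\sqrt a/4$, while $(a-b)n$ can be as small as $n$, so the error term is not automatically negligible and $n_0(a,b)$ need not be controllable uniformly. Moreover, the inequality as stated fails in small instances (for example $p(3)^2=9<10=p(4)p(2)$), so the statement must really be read as holding for $n$ large in terms of $a,b$. The delicate boundary regime $b=a-1$ would presumably require either an induction on $a$ or a monotonicity statement for the ratio $p((a+b)n)p((a-b)n)/p(an)^2$ in $b$, mirroring the DeSalvo--Pak argument for the fixed $a=1$ case.
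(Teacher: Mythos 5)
The first thing to note is that the paper does not prove this statement at all: it is quoted as an open conjecture of Chen and serves purely as motivation. The only thing the paper actually establishes in this direction is the analogue for the restricted partition function, Corollary \ref{cor3}, and that is proved (or rather, left to the reader) by entirely elementary means --- the polynomial upper and lower bounds of Proposition \ref{Proposition4} --- which work because $p_\mathcal{A}(n,k)$ is a quasi-polynomial rather than a function of exponential growth. So there is no proof in the paper to compare yours against; what follows is an assessment of your sketch on its own terms.

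Your asymptotic skeleton is sound for fixed $a,b$ and large $n$: the exponent computation is correct, the bound $2\sqrt a-\sqrt{a+b}-\sqrt{a-b}\geqslant b^2/(4a^{3/2})$ checks out, and your observation that the inequality fails literally at $a=3$, $b=1$, $n=1$ is accurate (the prefactor contribution is $\log(1-b^2/a^2)$ rather than half of it, but that is immaterial). However, as you concede, this is a plan and not a proof: the threshold $n_0(a,b)$, the finite verification below it, and the uniformity problem in the regime $b=a-1$ are all left open, so there is a genuine gap. More importantly, you have overlooked a far shorter route that renders the entire Rademacher analysis unnecessary: substituting $N=an$ and $M=bn$ into Theorem \ref{DSP1} yields $p^2(an)>p(an+bn)p(an-bn)$ whenever $an>bn>1$, i.e.\ in every case except $b=n=1$. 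That single remaining case is the ordinary log-concavity of $p$, which DeSalvo and Pak prove for arguments at least $26$ and which is known to fail at small odd arguments (your $a=3$ example, and likewise $p(5)^2=49<55=p(4)p(6)$). Thus, granting Theorem \ref{DSP1} --- which is stated immediately beside the conjecture --- the whole statement collapses to a two-line substitution together with the observation that it is false as literally written when $b=n=1$ and $a$ is a small odd number; no exponential asymptotics, error-term bookkeeping, or induction on $a$ is needed.
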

 \noindent Therefore, we also examine under what conditions on a sequence $\mathcal{A}=\left(a_i\right)_{i=1}^\infty$ and $k>0$, the inequality
 \begin{align*}
     p_\mathcal{A}^2(an,k)&>p_\mathcal{A}(an+bn,k)p_\mathcal{A}(an-bn,k)
 \end{align*}
is valid for all sufficiently large values of $n$, where $a$ and $b$ are fixed positive integers such that $a>b$.

Furthermore, DeSalvo and Pak \cite{DSP} resolve Sun's conjecture \cite{Sun}, namely, they prove that the sequence $p(n)/n$ is log-concave for $n\geqslant31.$ On the other hand, we deal with the log-concavity of $p_\mathcal{A}(n,k)/n^\alpha$, where $\alpha$ is an arbitrary positive real number:
\begin{align*}
    \left(\frac{p_\mathcal{A}(n,k)}{n^\alpha}\right)^2&>\left(\frac{p_\mathcal{A}(n+1,k)}{(n+1)^\alpha}\right)\cdot\left(\frac{p_\mathcal{A}(n-1,k)}{(n-1)^\alpha}\right).
\end{align*}

It should be pointed out that research related to log-behavior is not only art for art's sake, but it plays a crucial role in many subjects, and there is a wealth of literature devoted to its applications, see \cite{Asai, Brenti, Doslic2, Doslic3, Huh, Klauder, Milenkovic, Prelec, Shapiro, Stanley2, Warde}. For example, log-concave sequences appear in mathematical biology, where they enumerate secondary structures of some biopolymers \cite{Doslic3, Shapiro}.
Also, the log-concavity plays an important role in signal processing in the so-called white noise theory, see \cite{Asai}. Moreover, Huh \cite{Huh} discovered an interesting interplay between log-concavity and purely chromatic graph theory. On the other hand, log-convex sequences ($w_n^2<w_{n-1}w_{n+1}$) are used in quantum physics for constructing generalized coherent states in models with discrete non-linear spectra, see \cite{Klauder}. In probability theory, Warde and Katti \cite{Warde} showed that there is also a simple sufficient condition for a discrete random variable $X$ to be infinitely divisible which is directly connected with the log-convexity of the sequence $\left(P\{X=n\}\right)_{n=1}^\infty$. 

We also discuss a bit more involved challenge related to the restricted partition function --- that is the $r$-log-concavity of $p_\mathcal{A}(n,k)$. A sequence $\omega=(w_i)_{i=0}^\infty$ is said to be (asymptotically) $r$-log-concave, if there is a positive integer $N$ such that 
\begin{align*}
    \left(\widehat{\mathcal{L}}\omega\right)_i, 
    \left(\widehat{\mathcal{L}}^2\omega\right)_i,\ldots,
    \left(\widehat{\mathcal{L}}^r\omega\right)_i
\end{align*}
are positive for each $i\geqslant N$, where
\begin{align*}
    \widehat{\mathcal{L}}\omega=\left(w_{i+1}^2-w_{i}w_{i+2}\right)_{i=0}^\infty \text{ and } \widehat{\mathcal{L}}^k\omega=\widehat{\mathcal{L}}\left(\widehat{\mathcal{L}}^{k-1}\omega\right)
\end{align*}
for $k\in\{2,3,\ldots,r\}$.
It is clear that $1$-log-concavity of $(w_i)_{i=0}^\infty$ corresponds to $w_{n+1}^2-w_{n}w_{n+2}>0$; the $2$-log-concavity additionally requires that the inequality
$$w_{n+2}^4-2w_{n+1}w_{n+2}^2w_{n+3}+w_{n+1}^2w_{n+2}w_{n+4}+w_{n}w_{n+2}w_{n+3}^2-w_{n}w_{n+2}^2w_{n+4}>0$$
is satisfied for all $n\geqslant N$. Clearly, as $r$ grows, the required conditions become more and more intricate. 

Despite the fact that the $r$-log-concavity problem for a given sequence is\linebreak a highly non-trivial task in general, Hou and Zhang \cite{Hou} discovered an effective criterion for its solution (see, Theorem \ref{HouZ} below). Moreover, these authors used that result and proved that the partition function $p(n)$ is (asymptotically) $r$-log-concave for each $r\in\mathbb{N}_+$, for more details see \cite{Hou2}. Afterwards, Mukherjee, Zhang and Zhong \cite{Mu} applied the aforementioned criterion and showed that the overpartition function is (asymptotically) $r$-log-concave for any $r\in\mathbb{N}_+$. Recall that an overpartition \cite{Corteel} of an integer $n$ is a partition of $n$ where the first occurrence of every distinct part may be overlined. The overpartition function $\overline{p}(n)$ denotes the number of all overpartitions of $n$. There is a rich and vast literature dealing with the analogous problems for other sequences, we encourage the reader to see, for instance, \cite{Butler, Chen1, Dawsey, Guo1, Heim1, Heim2, Krattenthaler, Leroux, Ono, Sagan, Sagan1, MU2, Wagner}.

We present a similar result to those mentioned above in the case of the restricted partition function $p_\mathcal{A}(n,k)$. In fact, we show a bit more general criterion for $r$-log-concavity of quasi-polynomial-like functions. By a quasi-polynomial-like function we mean any expression of the form
\begin{align*}
    f(n)=a_l(n)n^l+a_{l-1}(n)n^{l-1}+\cdots+a_{l-s}(n)n^{l-s}+o\left(n^{l-s}\right),
\end{align*}
where $l$ and $s$ are fixed positive integers such that $l\geqslant s$, $a_{l-s}(n),\ldots,a_l(n)$ are real coefficients depending on the residue class of $n\pmod*{M}$ for some $M\geqslant2$, and $o$ denotes the standard little-$o$ notation. The main result of this paper presents, in some sense, optimal requirements on the function $f(n)$ to prove that it is $r$-log-concave for any $r\geqslant1$.

This manuscript is organized as follows. Section 2 contains necessary notations, properties and tools that are used throughout the article. Section 3 is devoted to some basic generalizations of the log-concavity criterion for the restricted partition function. We investigate the analogues of both Theorem \ref{DSP1} and Conjecture \ref{DSP2}, as well as the log-concavity of the sequence $\left(p_\mathcal{A}(n,k)/n^\alpha\right)_{n=1}^\infty$. Finally, Section 4 studies the $r$-log-concavity of both quasi-polynomial-like functions and the restricted partition function $p_\mathcal{A}(n,k)$. To deal with these problems, we use methods from Hou and Zhang's paper \cite{Hou}. Actually, we also present an alternative more direct approach which is effective in the case of quasi-polynomial-like functions.

\section{Preliminaries}

At the beginning, let us fix some notations. The set of non-negative integers is denoted by $\mathbb{N}$, while $\mathbb{N}_+$ and $\mathbb{R}_+$ correspond to the sets of positive integers and positive real numbers, respectively. Additionally, we put $\mathbb{N}_{\geqslant k}=\mathbb{N}\setminus \{0,1,\ldots,k-1\}$.

Henceforth, $\mathcal{A}=(a_i)_{i=1}^\infty$ denotes a weakly increasing sequence of positive integers. Let $k$ and $n$ be fixed positive integers. The restricted partition function $p_\mathcal{A}(n,k)$ enumerates all partitions of $n$ with parts in the multiset $\{a_1,a_2,\ldots,a_k\}$. We additionally set $p_\mathcal{A}(0,k)=1$ and $p_\mathcal{A}(n,k)=0$ if $n<0$. From the equality (\ref{eq1.1}), one can easily derive a recurrence relation for the restricted partition function of the form
\begin{equation*}
p_\mathcal{A}(n,k)=p_\mathcal{A}(n-a_k,k)+p_\mathcal{A}(n,k-1),
\end{equation*}
where $k\geqslant2$. For $k=1$, we have
\begin{align*}
	        p_\mathcal{A}(n,1)=\begin{cases}
        0, & \text{if } a_1\nmid n,\\
        1, & \text{if } a_1\mid n.
        \end{cases}
	    \end{align*}
It turns out that restricted partition function $p_\mathcal{A}(n,k)$ behaves `almost like' a polynomial of degree $k-1$. More precisely, it is a $quasi$-polynomial of the form
\begin{align*}
    p_\mathcal{A}(n,k)=c_{k-1}(r)n^{k-1}+c_{k-2}(r)n^{k-2}+\cdots+c_0(r),
\end{align*}
where every $c_j(r)$ depends on the residue class $r$ of $n\pmod*{\lcm(a_1,a_2,\ldots,a_k)}$ for $0\leqslant j \leqslant k-1$ (for additional information about quasi-polynomials, we refer the reader to Stanley's book \cite[Section~4.4]{Stanley}). This result goes back to Bell \cite{Bell}, who showed it via partial fraction decomposition of the related rational generating function. However, there are more concrete estimates for the restricted partition function. For instance, Almkvist \cite{GA} obtained a very elegant result related to the asymptotic behavior of $p_\mathcal{A}(n,k)$. Let us define symmetric polynomials $\sigma_i(x_1,x_2,\ldots,x_k)$ by the power series expansion
\begin{align*}
\prod_{i=1}^k\frac{x_it/2}{\sinh(x_it/2)}=\sum_{m=0}^\infty\sigma_m(x_1,x_2,\ldots,x_k)t^m.
\end{align*} 
Then Almkvist's characterization of $p_\mathcal{A}(n,k)$ is the following.
\begin{thm}\label{2.4}
Let $\mathcal{A}=\left(a_i\right)_{i=1}^\infty$ and $k\in\mathbb{N}_{\geqslant2}$ be fixed. For a given integer $j\in\{1,2,\ldots,k\}$, if $\gcd A=1$ for every $j$-element multisubset ($j$-multisubset) $A$ of $\{a_1,a_2,\ldots,a_k\}$ and $\sigma=a_1+a_2+\cdots+a_k$, then
\begin{equation*}
p_\mathcal{A}(n,k)=\frac{1}{\prod_{i=1}^ka_i}\sum_{i=0}^{k-j}\sigma_i(a_1,a_2,\ldots,a_k)\frac{(n+\sigma/2)^{k-1-i}}{(k-1-i)!}+O(n^{j-2})
\end{equation*}
as $n\to\infty$.
\end{thm}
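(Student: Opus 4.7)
The plan is to derive the asymptotic expansion via a residue analysis of the rational generating function in (\ref{eq1.1}). Starting from the Cauchy integral representation
\[
p_\mathcal{A}(n,k) = \frac{1}{2\pi i}\oint_{|x|=\epsilon}\frac{1}{x^{n+1}}\prod_{i=1}^k\frac{1}{1-x^{a_i}}\,dx,
\]
I would expand the contour to $|x|=R$ and let $R\to\infty$. Since the integrand decays like $|x|^{-(n+1+\sigma)}$ at infinity, this equates $p_\mathcal{A}(n,k)$ with $-1$ times the sum of residues at the poles of the generating function, namely the $D$-th roots of unity, where $D=\lcm(a_1,\ldots,a_k)$. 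At a primitive $d$-th root of unity $\rho$, exactly the factors $1/(1-x^{a_i})$ with $d\mid a_i$ are singular, so the pole has order $\#\{i:d\mid a_i\}$.

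The hypothesis on $j$-multisubsets immediately controls these orders at nontrivial poles: if $d\mid a_{i_1},\ldots,a_{i_j}$ for some $d>1$, then $\{a_{i_1},\ldots,a_{i_j}\}$ would be a $j$-multisubset whose gcd is at least $d$, contradicting the assumption. Hence for every $d>1$ and every primitive $d$-th root of unity $\rho$, the pole at $\rho$ has order at most $j-1$, and its residue contributes a bounded phase $\rho^{-n}$ times a polynomial in $n$ of degree at most $j-2$. Summing over the finitely many such roots yields a total error of magnitude $O(n^{j-2})$.

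For the main term, the order-$k$ pole at $x=1$ is handled by the substitution $x=e^{-t}$, which gives $dx/x^{n+1}=-e^{nt}\,dt$ and, using the defining power series of the $\sigma_m$,
\[
\prod_{i=1}^k\frac{1}{1-e^{-a_i t}} \;=\; \frac{e^{\sigma t/2}}{t^k\prod_{i=1}^k a_i}\prod_{i=1}^k\frac{a_i t/2}{\sinh(a_i t/2)} \;=\; \frac{e^{\sigma t/2}}{t^k\prod_{i=1}^k a_i}\sum_{m=0}^\infty\sigma_m(a_1,\ldots,a_k)\,t^m.
\]
The contribution of $x=1$ to $p_\mathcal{A}(n,k)$ is the residue at $t=0$ of $e^{nt}$ times the above, i.e., the coefficient of $t^{-1}$ in $e^{(n+\sigma/2)t}\cdot t^{-k}\bigl(\prod_{i=1}^k a_i\bigr)^{-1}\sum_m\sigma_m\,t^m$, which evaluates to
\[
\frac{1}{\prod_{i=1}^k a_i}\sum_{m=0}^{k-1}\sigma_m\,\frac{(n+\sigma/2)^{k-1-m}}{(k-1-m)!}.
\]
Truncating this sum at $m=k-j$ discards only terms of degree at most $j-2$ in $n$, which merge with the previous $O(n^{j-2})$ error to yield the stated expansion.

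The main obstacle is a clean error analysis at the nontrivial roots of unity. While the pole-order inequality is an immediate consequence of the multisubset hypothesis, establishing the uniform $O(n^{j-2})$ bound requires expanding each factor $1/(1-x^{a_i})$ locally at $x=\rho$ as a power series in $(x-\rho)$ and controlling the resulting partial fractions; the computation of the leading term is then essentially mechanical, once the substitution $x=e^{-t}$ exposes the elegant shift $n\mapsto n+\sigma/2$ via the factor $e^{\sigma t/2}$.
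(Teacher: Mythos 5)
Your argument is correct. Note that the paper itself gives no proof of this statement: Theorem \ref{2.4} is quoted from Almkvist \cite{GA}, and your contour-shift/residue derivation --- reading off the order-$k$ pole at $x=1$ via the substitution $x=e^{-t}$ to produce the $\sigma_m$-expansion in $(n+\sigma/2)$, and using the $j$-multisubset hypothesis to cap the pole order at every nontrivial root of unity by $j-1$ --- is essentially the classical Sylvester-wave computation underlying Almkvist's result. The one point you flag as a remaining ``obstacle'' is in fact automatic: there are only finitely many poles, and the partial-fraction coefficients at each are independent of $n$, so each nontrivial pole contributes $\rho^{-n}$ times a fixed polynomial of degree at most $j-2$ and the uniform $O(n^{j-2})$ bound follows with no further estimation.
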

One can verify that $\sigma_0=1$, and $\sigma_i=0$ if $i$ is odd. Moreover, if we put $s_i=a_1^i+a_2^i+\cdots+a_k^i$, then
\begin{align*}
\sigma_2=-\frac{s_2}{24}\text{,}\hspace{0.2cm}\sigma_4=\frac{5s_2^2+2s_4}{5760}\text{,}\hspace{0.2cm}\sigma_6=-\frac{35s_2^3+42s_2s_4+16s_6}{2903040}.
\end{align*}
Further, it turns out that converse implication in Theorem \ref{2.4} is also true, because of the following. 

\begin{pr}\label{pr5.7}
Let $\mathcal{A}=\left(a_i\right)_{i=1}^\infty$ and $k\in\mathbb{N}_{+}$ be fixed. If
\begin{equation*}
p_\mathcal{A}(n,k)=c_{k-1}n^{k-1}+c_{k-2}n^{k-2}+\cdots+c_{j-1}n^{j-1}+c_{j-2}(n)n^{j-2}+\cdots+c_0(n),
\end{equation*}
where $c_{k-1},\ldots,c_{j-1}$ are independent of a residue class $n\pmod*{\lcm(a_1,\ldots,a_k)}$, then $\gcd A=1$ for all $j$-multisubsets $A$ of $\{a_1,a_2,\ldots,a_k\}$.
\end{pr}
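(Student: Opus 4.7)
The plan is to argue the contrapositive using a partial fraction decomposition of the generating function (\ref{eq1.1}). Assume that some $j$-multisubset $A$ of $\{a_1,a_2,\ldots,a_k\}$ satisfies $d:=\gcd A>1$, and let $m_d:=\#\{i\in\{1,\ldots,k\}:d\mid a_i\}$, so that $m_d\geqslant j$. The goal is to show that the coefficient of $n^{m_d-1}$ in the quasi-polynomial expansion of $p_\mathcal{A}(n,k)$ genuinely depends on the residue class of $n$ modulo $D:=\lcm(a_1,\ldots,a_k)$. Since $j-1\leqslant m_d-1\leqslant k-1$, this will contradict the hypothesis that $c_{k-1},c_{k-2},\ldots,c_{j-1}$ are all independent of that residue class.

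First I would invoke the cyclotomic factorization $1-x^{a_i}=\prod_{e\mid a_i}\Phi_e(x)$ to rewrite the denominator of (\ref{eq1.1}) as $\prod_{e\mid D}\Phi_e(x)^{m_e}$, where $m_e:=\#\{i:e\mid a_i\}$. At every primitive $d$-th root of unity $\zeta$, the rational function $\prod_{i=1}^k(1-x^{a_i})^{-1}$ therefore has a pole of order exactly $m_d$, and the coefficient of $(x-\zeta)^{-m_d}$ in its Laurent expansion is non-zero. Extracting principal parts at every pole, expanding each summand as a power series about $x=0$, and reading off the $x^n$-coefficient leads to a representation
\begin{equation*}
p_\mathcal{A}(n,k)=\sum_{e\mid D}\;\sum_{\zeta\in\mu_e^*} P_\zeta(n)\,\zeta^{-n},
\end{equation*}
in which $\mu_e^*$ denotes the set of primitive $e$-th roots of unity and every $P_\zeta$ is a polynomial of degree exactly $m_e-1$ with non-zero leading coefficient.

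Regrouping by powers of $n$ and focusing on the coefficient of $n^{m_d-1}$, I would obtain a constant term (coming from the pole at $x=1$, if $m_1\geqslant m_d$) plus a character sum $\sum_{e\geqslant 2,\,m_e\geqslant m_d}\sum_{\zeta\in\mu_e^*}\beta_{e,\zeta}\,\zeta^{-n}$, where crucially $\beta_{d,\zeta}\neq 0$ for every $\zeta\in\mu_d^*$. The main technical obstacle is ruling out accidental cancellation among the contributions attached to different divisors $e$; I would resolve this via the linear independence of the functions $n\mapsto\zeta^{-n}$ as $\zeta$ ranges over the $D$-th roots of unity. Since $\mu_d^*$ consists of roots distinct from $1$ and the corresponding coefficients $\beta_{d,\zeta}$ are non-zero, no combination of these characters can collapse to a constant function of $n$. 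Hence $c_{m_d-1}(n)$ depends nontrivially on $n\pmod*{D}$, delivering the required contradiction.
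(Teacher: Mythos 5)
The paper states Proposition~\ref{pr5.7} without proof, deferring entirely to \cite{KG2}, so there is no in-paper argument to compare against; judged on its own, your contrapositive via partial fractions is correct. The key points all check out: the pole of $\prod_{i=1}^k(1-x^{a_i})^{-1}$ at a primitive $d$-th root of unity $\zeta$ has order exactly $m_d\geqslant j$, so the associated polynomial $P_\zeta$ has degree $m_d-1$ with non-zero leading coefficient, and the linear independence of the characters $n\mapsto\zeta^{-n}$ on $\mathbb{Z}/D\mathbb{Z}$ forces $c_{m_d-1}(n)$ (which lies in the range $j-1\leqslant m_d-1\leqslant k-1$ covered by the hypothesis) to depend genuinely on the residue of $n$ modulo $D$. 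This is the standard route --- essentially the Bell-style partial-fraction analysis the paper itself invokes for the quasi-polynomial structure of $p_\mathcal{A}(n,k)$ --- and is in all likelihood the same argument as in the cited source.
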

For more details about the aforementioned result, we refer the reader to \cite{KG2}. It is also worth noting that we have an explicit formula for $p_\mathcal{A}(n,k)$ --- that is a result due to Cimpoeaş and Nicolae \cite{CN} with a small modification. Before we present their theorem, let us recall that the unsigned Stirling number of the first kind $\stirlingone{n}{m}$ is the number of permutations of $n$ elements with exactly $m$ cycles. It might be defined equivalently as the coefficient of the rising factorial (Pochhammer function), namely
\begin{align*}
    x^{\overline{n}}:=x(x+1)\cdots(x+n-1)=\sum_{i=0}^{n}\stirlingone{n}{i}x^i.
\end{align*}
We encourage the reader to see \cite[Chapter~6]{GKP} for more information about Stirling numbers.

\begin{thm}\label{CNT}
For any $\mathcal{A}=\left(a_i\right)_{i=1}^\infty$ and $k\geqslant1$, we have
\begin{align*}
    p_\mathcal{A}(n,k)=\frac{1}{(k-1)!}\sum_{m=0}^{k-1}\sum_{\substack{ 0\leqslant j_1\leqslant\frac{D}{a_1}-1\\ \svdots \\ 0\leqslant j_k\leqslant\frac{D}{a_k}-1\\
    S\equiv n\pmod*{D}}}\sum_{i=m}^{k-1}\stirlingone{k}{i+1}(-1)^{i-m}\binom{i}{m}D^{-i}S^{i-m}n^m,
\end{align*}
where $D=\lcm(a_1,a_2,\ldots,a_k)$ and $S=a_1j_1+\cdots+a_kj_k$.
\end{thm}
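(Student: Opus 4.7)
The plan is to work directly from the generating function (\ref{eq1.1}). Since $a_i \mid D$ for each $i$, every factor can be put over the common denominator $1-x^D$ via
$$\frac{1}{1-x^{a_i}}=\frac{1+x^{a_i}+x^{2a_i}+\cdots+x^{(D/a_i-1)a_i}}{1-x^D}=\frac{1}{1-x^D}\sum_{j_i=0}^{D/a_i-1}x^{a_ij_i}.$$
Multiplying the $k$ factors,
$$\prod_{i=1}^k\frac{1}{1-x^{a_i}}=\frac{1}{(1-x^D)^k}\sum_{j_1=0}^{D/a_1-1}\cdots\sum_{j_k=0}^{D/a_k-1}x^S,$$
where $S=a_1j_1+\cdots+a_kj_k$. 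Using the standard expansion $(1-x^D)^{-k}=\sum_{m\geqslant0}\binom{m+k-1}{k-1}x^{mD}$ and extracting the coefficient of $x^n$ on both sides, I obtain
$$p_\mathcal{A}(n,k)=\sum_{\substack{0\leqslant j_i\leqslant D/a_i-1\\S\equiv n\pmod{D}}}\binom{(n-S)/D+k-1}{k-1}.$$

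The second step is to rewrite the binomial coefficient as an explicit polynomial in $z=(n-S)/D$. Starting from $z^{\overline{k}}=\sum_{i=0}^{k}\stirlingone{k}{i}z^i$ and using $\stirlingone{k}{0}=0$ for $k\geqslant1$, dividing by $z$ gives
$$(z+1)(z+2)\cdots(z+k-1)=\sum_{i=0}^{k-1}\stirlingone{k}{i+1}z^i,$$
hence $\binom{z+k-1}{k-1}=\frac{1}{(k-1)!}\sum_{i=0}^{k-1}\stirlingone{k}{i+1}z^i$. Substituting $z=(n-S)/D$ and expanding $(n-S)^i=\sum_{m=0}^{i}\binom{i}{m}(-1)^{i-m}S^{i-m}n^m$ via the binomial theorem, then swapping the two inner sums, yields exactly the form stated in the theorem.

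A subtle point that needs checking is that summing the polynomial expression over \emph{all} tuples satisfying $S\equiv n\pmod{D}$ (rather than only those with $S\leqslant n$) produces the correct value. This works because the constraints $0\leqslant j_i\leqslant D/a_i-1$ force $0\leqslant S\leqslant kD-\sum_ia_i<kD$, so whenever $S>n$ and $S\equiv n\pmod{D}$ we have $z=(n-S)/D\in\{-1,-2,\ldots,-(k-1)\}$, which makes one of the linear factors $z+j$ vanish; thus the spurious terms evaluate to zero automatically.

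The main obstacle is bookkeeping rather than ideas: one has to keep the Stirling-number indexing $\stirlingone{k}{i+1}$ straight (as opposed to $\stirlingone{k-1}{i}$, which would appear from the alternate expansion of $(z+1)^{\overline{k-1}}$ by rising factorials starting at $z+1$) and correctly swap the $i$- and $m$-summations. Once that is done, the proof is a clean partial-fraction-style manipulation followed by a binomial expansion.
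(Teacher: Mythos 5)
Your proof is correct, and it is worth noting that the paper itself does not actually prove Theorem \ref{CNT}: it imports the formula from Cimpoea\c{s} and Nicolae \cite[Theorem 2.8]{CN} and merely explains, in Remark \ref{remark1}, which two indexing errors of the original must be repaired (the upper limit $k-1$ in place of $n-1$, and $\stirlingone{k}{i+1}$ in place of $\stirlingone{k}{i}$, the latter coming from the identity $\binom{n+k-1}{k-1}=\frac{1}{(k-1)!}\sum_{i=0}^{k-1}\stirlingone{k}{i+1}n^i$). Your argument supplies a complete, self-contained derivation: the reduction of the generating function to $\frac{1}{(1-x^D)^k}\sum x^S$, the extraction of $\binom{(n-S)/D+k-1}{k-1}$, and the expansion via $(z+1)\cdots(z+k-1)=\sum_{i=0}^{k-1}\stirlingone{k}{i+1}z^i$ are all sound, and in particular your computation independently confirms exactly the corrections the paper asserts in the remark. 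The one point that genuinely requires care --- that dropping the constraint $S\leqslant n$ and summing the polynomial over all residue-compatible tuples introduces no error --- you handle correctly: the bound $S\leqslant kD-\sum_i a_i<kD$ forces the spurious arguments $z=(n-S)/D$ into $\{-1,\ldots,-(k-1)\}$, where the rising-factorial product vanishes (and for $k=1$ no spurious terms occur at all, consistent with the convention $0^0=1$ noted in Remark \ref{remark1}). So your write-up is not merely a valid alternative; it is a proof where the paper offers only a citation plus a correction, which is arguably the more useful document to have.
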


\begin{re}\label{remark1}
{\rm One can notice that there is a difference between the statement in Theorem \ref{CNT} and the original formula from Cimpoeaş and Nicolae's paper \cite[Theorem 2.8]{CN} which states that
\begin{align*}
    p_\mathcal{A}(n,k)=\frac{1}{(k-1)!}\sum_{m=0}^{n-1}\sum_{\substack{ 0\leqslant j_1\leqslant\frac{D}{a_1}-1\\ \svdots \\ 0\leqslant j_k\leqslant\frac{D}{a_k}-1\\
    S\equiv n\pmod*{D}}}\sum_{i=m}^{k-1}\stirlingone{k}{i}(-1)^{i-m}\binom{i}{m}D^{-i}S^{i-m}n^m
\end{align*}
with the same notation as before. At first, there is a typographical error over the first sum, namely, there has to be $k-1$ instead of $n-1$. Furthermore, we need to replace $\stirlingone{k}{i}$ by $\stirlingone{k}{i+1}$ --- that is a consequence of the equality 
\begin{align*}
    \binom{n+k-1}{k-1}=\frac{1}{(k-1)!}\sum_{i=0}^{k-1}\stirlingone{k}{i+1}n^i,
\end{align*}
which is accidentally misspelled at the beginning of Section 2 in \cite{CN}.

For the sake of clarity, it should be also pointed out that in Theorem \ref{CNT}, we may have that $S=0$. If this is the case, we assume that $0^0=1$. 
}
\end{re}
From the above-mentioned properties, the author \cite{KG2} deduced the following lower and upper bounds for the restricted partition function. 

\begin{pr}\label{Proposition4}
Let $\mathcal{A}=\left(a_i\right)_{i=1}^\infty$ and $k\in\mathbb{N}_{\geqslant2}$ be fixed. For a given integer $j\in\{2,3,\ldots,k\}$, if $\gcd A=1$ for all $j$-multisubsets $A$ of $\{a_1,a_2,\ldots,a_k\}$, then
\begin{equation*}
c_{k-1}n^{k-1}+\cdots+c_{j-1}n^{j-1}-Fn^{j-2}<p_\mathcal{A}(n,k)<c_{k-1}n^{k-1}+\cdots+c_{j-1}n^{j-1}+Fn^{j-2}
\end{equation*}
holds for every $n>0$, where all the coefficients $c_i$ are uniquely determined by Theorem \ref{2.4}, and $F=\frac{\prod_{i=1}^k(1+iDk)}{k!\prod_{i=1}^ka_i}$ with $D=\lcm(a_1,a_2,\ldots,a_k)$.
\end{pr}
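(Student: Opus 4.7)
The plan is to combine Theorem \ref{CNT}, which gives an explicit quasi-polynomial expression for $p_\mathcal{A}(n,k)$, with Almkvist's Theorem \ref{2.4} and Proposition \ref{pr5.7}, which under the gcd hypothesis pin down the top $k-j+1$ coefficients of that quasi-polynomial as genuine constants. Concretely, I would first regroup the formula of Theorem \ref{CNT} by powers of $n$ and write
\[
p_\mathcal{A}(n,k) = \sum_{m=0}^{k-1} c_m(n)\, n^m,
\]
where each $c_m(n)$ depends only on $n \pmod*{D}$. Under the assumption that $\gcd A = 1$ for every $j$-multisubset, Proposition \ref{pr5.7} forces $c_{k-1}(n),\ldots,c_{j-1}(n)$ to be independent of the residue class, identifying them with the constants $c_{k-1},\ldots,c_{j-1}$ produced by Theorem \ref{2.4}. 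Hence the statement reduces to proving a uniform bound
\[
\left|E(n)\right| < F\, n^{j-2}, \qquad E(n) := \sum_{m=0}^{j-2} c_m(n)\, n^m,
\]
valid for every $n>0$.

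For the remainder estimate I would work directly from Theorem \ref{CNT}. Two elementary observations drive everything. First, the inner quantity $S = a_1 j_1 + \cdots + a_k j_k$ satisfies $0 \le S \le kD$, because each summand $a_i j_i \le a_i(D/a_i - 1) < D$. Second, the number of admissible tuples $(j_1,\ldots,j_k)$ with $S\equiv n\pmod*{D}$ is at most $\prod_{i=1}^{k}(D/a_i) = D^k/\prod a_i$. Substituting these two estimates, together with $|(-1)^{i-m}|=1$ and $D^{-i}S^{i-m}\le k^{i-m}D^{-m}$, into the expression for $c_m(n)$ yields an explicit upper bound for each $|c_m(n)|$ in terms of Stirling numbers of the first kind and binomial coefficients.

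The main obstacle is consolidating the resulting Stirling/binomial sum into the clean closed form $F = \prod_{i=1}^{k}(1+iDk)\big/\bigl(k!\prod_{i=1}^{k} a_i\bigr)$. I expect this to follow from the defining identity $x^{\overline{k}} = \sum_{i=0}^{k}\stirlingone{k}{i} x^i$ recalled just before Theorem \ref{CNT}: majorizing the binomial factors $\binom{i}{m}$ by $\binom{k-1}{m}$ (absorbed by a subsequent sum in $m$) allows the inner Stirling sum to telescope into a rising factorial evaluated at $x = 1 + Dk$, producing precisely $\prod_{i=1}^k (1+iDk)$ after the $1/(k-1)!$ normalisation in Theorem \ref{CNT} is upgraded to $1/k!$. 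Summing the resulting bounds for $|c_m(n)|\, n^m$ over $m = 0, \ldots, j-2$ and using $n \ge 1$ to dominate the lower powers by $n^{j-2}$ gives the two-sided inequality claimed in the proposition. A final bookkeeping check, verifying that each inequality above is strict for every $n > 0$ (not merely asymptotically), completes the argument.
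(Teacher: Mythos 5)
Your overall strategy is the right one and matches how the paper treats this result: Proposition \ref{Proposition4} is imported from \cite{KG2} and is described as being ``deduced from the above-mentioned properties'', i.e.\ precisely from Theorem \ref{CNT} (the exact quasi-polynomial coefficients) combined with Theorem \ref{2.4} (which, under the $\gcd$ hypothesis, makes $c_{k-1},\ldots,c_{j-1}$ genuine constants --- note it is Theorem \ref{2.4}, not Proposition \ref{pr5.7}, that supplies the direction you need; Proposition \ref{pr5.7} is the converse). The reduction to $|E(n)|<Fn^{j-2}$ and your two elementary bounds $S<kD$ and $\#\{(j_1,\ldots,j_k):S\equiv n\}\leqslant D^k/\prod_{i=1}^k a_i$ are exactly the right ingredients. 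The one step you leave as ``expected'' does close, but not quite as you describe it: keeping $\binom{i}{m}$ (rather than majorizing it by $\binom{k-1}{m}$) and summing first over $m$ gives $\sum_{m=0}^{i}\binom{i}{m}k^{i-m}D^{-m}=(k+1/D)^i$, after which the identity $\sum_{i=0}^{k-1}\stirlingone{k}{i+1}x^i=(x+1)(x+2)\cdots(x+k-1)$ evaluated at $x=k+1/D$ yields
$$\sum_{m=0}^{j-2}|c_m(n)|\;\leqslant\;\frac{D}{(k-1)!\prod_{i=1}^k a_i}\prod_{\ell=1}^{k-1}\bigl(1+D(k+\ell)\bigr).$$
This is \emph{not} ``precisely'' $\prod_{i=1}^k(1+iDk)$ divided by $k!\prod a_i$; instead one checks the strict domination $kD\prod_{\ell=1}^{k-1}(1+D(k+\ell))<\prod_{i=1}^{k}(1+iDk)$ (compare the factor $1+D(k+\ell)$ with $1+(\ell+1)Dk$ and use $1+Dk>kD$ for the remaining factor), which combined with $n^m\leqslant n^{j-2}$ for $n\geqslant1$ gives the required strict inequality $|E(n)|<Fn^{j-2}$ for all $n>0$. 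So: correct skeleton, correct auxiliary estimates, and the final bookkeeping genuinely works, but the claim that the telescoped rising factorial lands exactly on $F$ should be replaced by this domination argument.
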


It is worth mentioning that the constant $F$ appearing in Proposition \ref{Proposition4} is not optimal, nevertheless we will soon observe the usefulness of the result.

\section{Some basic extensions of the log-concavity criterion for \texorpdfstring{$p_\mathcal{A}(n,k)$}{TEXT}}

Our first goal is to resolve a few of the problems presented in Introduction, that are 
\begin{align}
    p_\mathcal{A}^2(n,k)&>p_\mathcal{A}(n+m,k)p_\mathcal{A}(n-m,k),\label{problem1}\\
    p_\mathcal{A}^2(an,k)&>p_\mathcal{A}(an+bn,k)p_\mathcal{A}(an-bn,k),\label{problem2}\\
    \left(\frac{p_\mathcal{A}(n,k)}{n^\alpha}\right)^2&>\left(\frac{p_\mathcal{A}(n+1,k)}{(n+1)^\alpha}\right)\cdot\left(\frac{p_\mathcal{A}(n-1,k)}{(n-1)^\alpha}\right)\label{problem3},
\end{align}
where $a,b,m$ are fixed positive integers such that $a>b$ while $\alpha$ is an arbitrary positive real number. Essentially, all of them are very similar to Theorem \ref{log3} in the proof. Let us present a heuristic reasoning for the inequality (\ref{problem1}). The idea is quite easy, we apply Proposition \ref{Proposition4} in order to bound both $p_\mathcal{A}^2(n,k)$ from below and $p_\mathcal{A}(n+m,k)p_\mathcal{A}(n-m,k)$ from above. Subsequently, we just compare these bounds and examine when the relevant inequality between them holds.

\begin{thm}\label{thm1}
    Let $\mathcal{A}=\left(a_i\right)_{i=1}^\infty$ and $m\in\mathbb{N}_{\geqslant2}$ be fixed. Suppose farther that $k\geqslant4$ and $\gcd A=1$ for every $(k-2)$-multisubset $A\subset\{a_1,a_2,\ldots,a_k\}$. If $2\hat{d}\geqslant3m^3$, then the inequality
    \begin{align*}
        p_\mathcal{A}^2(n,k)>p_\mathcal{A}(n+m,k)p_\mathcal{A}(n-m,k)
    \end{align*}
    holds for all $n\geqslant\hat{d}$, where $\hat{d}=k^{-1}\prod_{i=1}^k(1+iDk)$ with $D=\lcm(a_1,a_2,\ldots,a_k)$; otherwise it is valid for any $n\geqslant3m$. Additionally, for the constant sequence $\mathcal{A}=(1,\textcolor{blue}{1},\textcolor{red}{1},\ldots)$, the above inequality is satisfied for all positive integers $n$ and $k\geqslant2$. 
\end{thm}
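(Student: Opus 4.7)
The strategy mirrors that of Theorem \ref{log3}: apply Proposition \ref{Proposition4} with $j=k-2$ to sandwich $p_\mathcal{A}(n,k)$ between polynomial expressions, then convert the desired strong log-concavity into a polynomial inequality whose dominant term is tractable.

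Set $P(n):=c_{k-1}n^{k-1}+c_{k-2}n^{k-2}+c_{k-3}n^{k-3}$ with the $c_i$ determined by Theorem \ref{2.4}. Proposition \ref{Proposition4} gives
\[
P(n)-F\,n^{k-4} < p_\mathcal{A}(n,k) < P(n)+F\,n^{k-4}
\qquad (n\geqslant 1),
\]
so
\[
p_\mathcal{A}^2(n,k) > \bigl(P(n)-F\,n^{k-4}\bigr)^2,
\]
\[
p_\mathcal{A}(n+m,k)\,p_\mathcal{A}(n-m,k) < \bigl(P(n+m)+F(n+m)^{k-4}\bigr)\bigl(P(n-m)+F(n-m)^{k-4}\bigr).
\]
It is therefore enough to show the purely polynomial inequality obtained by comparing these two bounds for $n$ beyond the stated thresholds.

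The main idea is that $\Delta(n):=P(n)^2-P(n+m)P(n-m)$ is a polynomial in $n$ whose leading term, by a direct Taylor expansion of $P$ around $n$, equals $(k-1)\,c_{k-1}^{\,2}\,m^2\,n^{2k-4}$, positive and of order $n^{2k-4}$. The $F$-error contributions to the inequality are each at most of order $F\,n^{2k-5}$, because $P$ has degree $k-1$ and the $F$-terms contribute degree $k-4$. Using $c_{k-1}=1/\bigl((k-1)!\prod_{i=1}^k a_i\bigr)$ from Theorem \ref{2.4}, one checks the identity $F/c_{k-1}=\hat d$, so the natural balance point between the main term and the $F$-error is $n\sim 2\hat d/((k-1)m^2)$. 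Once every subleading term is tracked, the required inequality reduces to the positivity of a polynomial in $n$ with leading coefficient $(k-1)c_{k-1}^{\,2}m^2>0$.

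Extracting the precise threshold is what produces the case split in the statement. The sub-leading contributions to $\Delta(n)$ carry $m^3$-factors (coming from the interaction of $c_{k-1}$ with the next Taylor term of $P$), while the $F$-error carries $\hat d$-factors. When $\hat d$ dominates (that is, $2\hat d \geqslant 3m^3$), the bound $n\geqslant \hat d$ already absorbs every sub-leading contribution and yields positivity; in the opposite regime, where $3m^3$ is the binding quantity, the $m$-driven sub-leading contributions take over and the weaker bound $n\geqslant 3m$ suffices (this is also the minimum required to keep $n-m$ within the positive range). For the constant sequence $\mathcal{A}=(1,1,1,\ldots)$, we have $p_\mathcal{A}(n,k)=\binom{n+k-1}{k-1}$, and the inequality reduces to the classical strong log-concavity of binomial coefficients, easily verified by writing $\binom{n+m+k-1}{k-1}\binom{n-m+k-1}{k-1}/\binom{n+k-1}{k-1}^{2}$ as a product of ratios each strictly less than $1$.

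The principal obstacle is the bookkeeping of the subleading terms of $\Delta(n)$ and of the $F$-error: one must carefully track how $c_{k-2}$, $c_{k-3}$, and the powers of $m$ interact in order to pin down the exact constants $\hat d$ and $3m$ appearing in the statement and, in particular, to justify the dichotomy through the inequality $2\hat d \geqslant 3m^3$. Everything else is routine algebra once the polynomial sandwich is in place.
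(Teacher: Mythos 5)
Your setup coincides with the paper's: both arguments invoke Proposition \ref{Proposition4} with $j=k-2$ to sandwich $p_\mathcal{A}(n,k)$ between $P(n)\mp Fn^{k-4}$, compare the square of the lower bound at $n$ with the product of the upper bounds at $n\pm m$, and your two structural observations --- that the main term of $P(n)^2-P(n+m)P(n-m)$ is $(k-1)c_{k-1}^2m^2n^{2k-4}$ and that $F/c_{k-1}=\hat d$ --- are correct and are exactly the quantities that drive the computation. The constant-sequence case via the product of ratios of binomial coefficients is also fine and matches the paper's (equally terse) treatment.

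There is, however, a genuine gap at the decisive step. The content of the theorem is not the eventual positivity of the polynomial difference (immediate from the positive leading coefficient) but the explicit thresholds $n\geqslant\hat d$ and $n\geqslant 3m$ together with the dichotomy $2\hat d\geqslant 3m^3$, and your proposal does not establish these: it relegates them to ``bookkeeping'' and offers only a heuristic (``when $\hat d$ dominates \dots the bound $n\geqslant\hat d$ already absorbs every sub-leading contribution'') that is not an argument and does not reflect the actual mechanism. In the paper, after normalizing by $c_{k-1}^2n^{2k-8}$ the inequality is reduced to an explicit quartic $f(n)>0$ with leading term $3m^2n^4$ and coefficients in $\hat b,\hat c,\hat d,m$; the dichotomy arises because the larger root
$n_2=\bigl(2\hat d-2\hat bm^2+\sqrt{2}\sqrt{2\hat d^2+3m^6}\,\bigr)/(6m^2)$
of $f''$ satisfies $n_2\leqslant\hat d$ when $2\hat d^2\geqslant 3m^6$ and $n_2\leqslant m$ otherwise, after which one checks that $f$ and $f'$ are positive at $\hat d$ (resp.\ at $3m$) and concludes by monotonicity of $f'$. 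Nothing in your outline produces the constants $\hat d$ and $3m$ or the comparison of $2\hat d$ with $3m^3$; in particular your parenthetical that $n\geqslant 3m$ is ``the minimum required to keep $n-m$ within the positive range'' misattributes the origin of that threshold, since $n\geqslant m$ already suffices for positivity of $n-m$. To complete the proof you would need to write out the normalized polynomial explicitly and carry out the calculus step (or an equivalent quantitative estimate) that the paper performs.
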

\begin{proof}
First, let $\mathcal{A}=(1,\textcolor{blue}{1},\textcolor{red}{1},\ldots)$ be a constant sequence. Observe that Theorem \ref{CNT} maintains that
\begin{align}\label{formula11}
            p_\mathcal{A}(n,k)=\binom{n+k-1}{k-1}.
\end{align}
Thus, the second part of the statement can be directly verified. For the first one, let $k$, $m$ and $\{a_1,a_2,\ldots,a_k\}$ satisfy the assumptions in the theorem. It follows from Proposition \ref{Proposition4} that the inequalities
\begin{align*}
    an^{k-1}+bn^{k-2}+cn^{k-3}-dn^{k-4}<p_\mathcal{A}(n,k)<an^{k-1}+bn^{k-2}+cn^{k-3}+dn^{k-4}
\end{align*}
hold for all $n$, where the coefficients $a, b, c$ and $d$ are given by
\begin{align*}
    a&=\frac{1}{(k-1)!\prod_{i=1}^ka_i},\\
    b&=\frac{\sigma}{2(k-2)!\prod_{i=1}^ka_i},\\
    c&=\frac{3\sigma^2-s_2}{24(k-3)!\prod_{i=1}^ka_i},\\
    d&=\frac{\prod_{i=1}^k(1+iDk)}{k!\prod_{i=1}^ka_i}
\end{align*}
with $\sigma=a_1+a_2+\cdots+a_k$, $s_2=a_1^2+a_2^2+\cdots+a_k^2$ and $D=\lcm(a_1,a_2,\ldots,a_k)$. Now, it is enough to check when the following inequality
\begin{align*}
    &\left[an^{k-1}+bn^{k-2}+cn^{k-3}-dn^{k-4}\right]^2\\
    >&\left[a(n+m)^{k-1}+b(n+m)^{k-2}+c(n+m)^{k-3}+d(n+m)^{k-4}\right]\\
    &\times\left[a(n-m)^{k-1}+b(n-m)^{k-2}+c(n-m)^{k-3}+d(n-m)^{k-4}\right]
\end{align*}
is true. The above might be simplified to
\begin{align*}  
 3m^2n^4+4(\hat{b}m^2-\hat{d})n^3&+(2\hat{b}^2m^2-3m^4-4\hat{b}\hat{d})n^2+2(\hat{b}\hat{c}m^2-\hat{b}m^4-3\hat{d}m^2-2\hat{c}\hat{d})n\\
 &+\hat{c}^2m^2-2\hat{b}\hat{d}m^2-\hat{b}^2m^4+2\hat{c}m^4+m^6>0,
\end{align*}
where $\hat{b}=b/a$, $\hat{c}=c/a$ and $\hat{d}=d/a$. Next, let $f(n)$ denote the polynomial on the left hand side. Both real roots of the second derivative of $f(n)$ are given by
\begin{align*}
    n_1&=\frac{2\hat{d}-2\hat{b}m^2-\sqrt{2}\sqrt{2\hat{d}^2+3m^6}}{6m^2},\\
     n_2&=\frac{2\hat{d}-2\hat{b}m^2+\sqrt{2}\sqrt{2\hat{d}^2+3m^6}}{6m^2}.
\end{align*}
Now, it is convenient to investigate two separately cases. Either we have that $2\hat{d}^2\geqslant3m^6$ or $2\hat{d}^2<3m^6$.
\begin{case}
    If the inequality $2\hat{d}^2\geqslant3m^6$ holds, then $n_2\leqslant\hat{d}$. But, this implies that $f'(n)$ is increasing for all $n\geqslant\hat{d}$. One can also easy verify that $f'(\hat{d})>0$ and $f(\hat{d})>0$, so we get that both $f'(n)$ and $f(n)$ are positive for every $n\geqslant\hat{d}$.
\end{case}
\begin{case}
    Since we assume that $2\hat{d}^2<3m^6$, it is straightforward to see that $n_2\leqslant m$. Thus $f'(n)$ grows for $n\geqslant m$. Furthermore, one may check that $f'(3m)>0$ as well as $f(3m)>0$. Hence, $f(n)$ is positive for all $n\geqslant 3m$, which finishes the proof.
\end{case}
\end{proof}
Let us note that the case of $m=1$ is omitted in the above, because it is contained in Theorem \ref{log3}. It is also worth pointing out that the demands for parameter $n$ in Theorem \ref{thm1} are not optimal. Nevertheless, the result may be applied for a wide class of integer sequences. Furthermore, by virtue of Corollary \ref{log3cor} and Theorem \ref{thm1}, we obtain the following characterization.

\begin{cor}\label{cor1}
The sequence $\left(p_\mathcal{A}(n,k)\right)_{n=1}^\infty$ is eventually strong log-concave (the inequality (\ref{problem1}) is satisfied for any positive integer $m$ and sufficiently large values of $n$) if and only if we have $k\geqslant2$ and $a_1=\cdots=a_k=1$ or $k\geqslant4$ and $\gcd A=1$ for all $(k-2)$-multisubsets $A$ of $\{a_1,a_2,\ldots,a_k\}$.
\end{cor}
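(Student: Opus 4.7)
The plan is to obtain the characterization as a direct consequence of the already established results, treating the two implications separately.

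For the forward direction, I would observe that eventual strong log-concavity is strictly stronger than eventual log-concavity: specializing the defining inequality $p_\mathcal{A}^2(n,k) > p_\mathcal{A}(n+m,k) p_\mathcal{A}(n-m,k)$ to the case $m=1$ yields precisely that $(p_\mathcal{A}(n,k))_{n=1}^\infty$ is eventually log-concave. Hence Corollary \ref{log3cor} applies and forces one of the two arithmetic conditions on $\mathcal{A}$ and $k$ listed in the statement.

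For the converse, I would split according to which of the two conditions is assumed. In the constant case $a_1=\cdots=a_k=1$ (with $k\geqslant 2$), Theorem \ref{thm1} asserts that the strong log-concavity inequality holds \emph{for all} positive integers $n$ and every $m\geqslant 2$, while the case $m=1$ is covered by Theorem \ref{log3}. In the second case ($k\geqslant 4$ with $\gcd A=1$ for every $(k-2)$-multisubset $A$), Theorem \ref{log3} yields the $m=1$ inequality for all $n\geqslant\frac{2}{k}\prod_{i=1}^k(1+iDk)$ (or the slightly larger threshold when $k=4$), and Theorem \ref{thm1} yields, for each fixed $m\geqslant 2$, the $m$-th inequality for all $n\geqslant\max\{\hat{d},3m\}$. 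In either scenario, for every fixed $m\in\mathbb{N}_+$ there is a threshold $N_m$ beyond which the strong log-concavity inequality at level $m$ holds, which is exactly the definition of eventual strong log-concavity.

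The whole argument is therefore essentially bookkeeping: matching the definition against the thresholds already produced by Theorems \ref{log3} and \ref{thm1}, and invoking Corollary \ref{log3cor} for the necessity. The only subtle point is to emphasize that in the definition of eventual strong log-concavity the threshold on $n$ is allowed to depend on $m$, so no uniformity in $m$ needs to be extracted from Theorem \ref{thm1}; otherwise there is no genuine obstacle.
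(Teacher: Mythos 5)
Your argument is correct and is essentially the paper's own proof, which simply cites Corollary \ref{log3cor} and Theorem \ref{thm1}; you have merely spelled out the bookkeeping (splitting necessity via the $m=1$ case and sufficiency via the $m$-dependent thresholds, with Theorem \ref{log3} covering $m=1$) that the paper leaves implicit. No gap.
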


\begin{proof}
    It follows directly from Corollary \ref{log3cor} and Theorem \ref{thm1}.
\end{proof}

Now, it is time to deal with the next task of this section. Surprisingly, the following is true.

\begin{cor}\label{cor3}
    Let $\mathcal{A}=\left(a_i\right)_{i=1}^\infty$ and $u,v\in\mathbb{R}_+$ be fixed such that $u>v$. If $k\geqslant2$ and $\gcd(a_1,a_2,\ldots,a_k)=1$, then the inequality
    \begin{align*}
        p_\mathcal{A}^2(un,k)>p_\mathcal{A}(un+vn,k)p_\mathcal{A}(un-vn,k)
    \end{align*}
    is true for every $n>\frac{4u}{kv^2}\prod_{i=1}^k(1+iDk)$ with $D=\lcm(a_1,a_2,\ldots,a_k)$ whenever $un, un+vn$ and $un-vn$ are integers.
\end{cor}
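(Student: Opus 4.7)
The plan is to follow the strategy of Theorem \ref{thm1}: use Proposition \ref{Proposition4} to bound $p_\mathcal{A}^2(un,k)$ from below and $p_\mathcal{A}(un+vn,k)\,p_\mathcal{A}(un-vn,k)$ from above, and then compare the two bounds. Because the assumption here is only $\gcd(a_1,\ldots,a_k)=1$, the version of Proposition \ref{Proposition4} available is the weakest one, namely $j=k$, giving the two-term estimate
$$c_{k-1}N^{k-1}-FN^{k-2}<p_\mathcal{A}(N,k)<c_{k-1}N^{k-1}+FN^{k-2}$$
for every positive integer $N$, where $c_{k-1}=1/\bigl((k-1)!\prod_{i=1}^{k}a_i\bigr)$ and $F=\prod_{i=1}^k(1+iDk)/\bigl(k!\prod_{i=1}^k a_i\bigr)$.

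Applying these bounds at $N=un$ and $N=un\pm vn$, dividing through by $c_{k-1}^{2}$, and setting $\hat{d}=F/c_{k-1}=\frac{1}{k}\prod_{i=1}^{k}(1+iDk)$, the claim reduces to verifying
$$\bigl((un)^{k-1}-\hat{d}(un)^{k-2}\bigr)^{2}>\bigl((un+vn)^{k-1}+\hat{d}(un+vn)^{k-2}\bigr)\bigl((un-vn)^{k-1}+\hat{d}(un-vn)^{k-2}\bigr).$$
Factoring $N^{k-2}$ out of each of the three parentheses, applying the identity $(A+\hat{d}-B)(A+\hat{d}+B)=(A+\hat{d})^{2}-B^{2}$ on the right-hand side, and using $(un)^2-(vn)^2=(un)^2(1-v^2/u^2)$, with the abbreviations $x=un$ and $\alpha=v^2/u^2\in(0,1)$, this is equivalent to
$$(x-\hat{d})^{2}>(1-\alpha)^{k-2}\bigl[(x+\hat{d})^{2}-\alpha x^{2}\bigr]=(1-\alpha)^{k-1}x^{2}+(1-\alpha)^{k-2}\bigl(2x\hat{d}+\hat{d}^{2}\bigr).$$

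For $k\geqslant 2$ the elementary inequalities $(1-\alpha)^{k-1}\leqslant 1-\alpha$ and $(1-\alpha)^{k-2}\leqslant 1$ hold, with equality exactly at $k=2$. Hence it is enough to prove
$$(x-\hat{d})^{2}>(1-\alpha)x^{2}+2x\hat{d}+\hat{d}^{2},$$
and expansion reduces this to the linear condition $\alpha x>4\hat{d}$. Substituting back, $(v^{2}/u^{2})\cdot un>4\hat{d}$ is exactly $n>4u\hat{d}/v^{2}=\frac{4u}{kv^{2}}\prod_{i=1}^{k}(1+iDk)$, which matches the threshold in the statement.

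The main difficulty I anticipate is not any single computation, but choosing the right algebraic grouping so that the resulting inequality is linear in $x$: the decisive step is the factorization on the right-hand side producing $(x+\hat{d})^{2}-\alpha x^{2}$, which cancels the $x^{2}$ coefficient on the left up to the small correction $\alpha x^{2}$. The proof is uniform in $k$ because $k=2$ is the extremal instance in which both of the crude bounds $(1-\alpha)^{k-1}\leqslant 1-\alpha$ and $(1-\alpha)^{k-2}\leqslant 1$ are simultaneously tight; for larger $k$ one could in principle sharpen the threshold by retaining the genuine powers of $1-\alpha$, but the present formulation is adequate for Corollary \ref{cor3}.
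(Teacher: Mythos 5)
Your proof is correct and is exactly the argument the paper intends: the paper omits the proof of Corollary \ref{cor3}, stating only that it is analogous to the proof of Theorem \ref{thm1}, and your application of Proposition \ref{Proposition4} with $j=k$ (which is where the hypothesis $\gcd(a_1,\ldots,a_k)=1$ enters), followed by the factorization $(un+vn+\hat d)(un-vn+\hat d)=(un+\hat d)^2-(vn)^2$ and the reduction to the linear condition $\alpha x>4\hat d$, recovers the stated threshold exactly. The only step worth making explicit is that the lower bound $c_{k-1}(un)^{k-1}-F(un)^{k-2}$ must be nonnegative before it is squared, which is automatic here because your threshold forces $un>4\hat d/\alpha>\hat d$.
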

Since the proof of Corollary \ref{cor3} is very easy and analogous to that one of Theorem \ref{thm1}, we leave it as an exercise for the reader. It is worth pointing out that even though the inequalities (\ref{problem1}) and (\ref{problem2}) look very similar, the criteria for their solutions are significantly different. In addition, even if we assume that $\gcd(a_1,a_2,\ldots,a_k)>1$, then (\ref{problem2}) might be still valid for all sufficiently large values of $n$, as it is shown in the following instance.

\begin{ex}
{\rm Let $\mathcal{A}=(2,\textcolor{blue}{2},\textcolor{red}{2},\ldots)$ be a constant sequence and let $k\in\mathbb{N}_+$ be fixed. Suppose further that $u=4$ and $v=2$. Thus we deal with the inequality of the form
\begin{align}\label{ex1}
p_\mathcal{A}^2(4n,k)>p_\mathcal{A}(6n,k)p_\mathcal{A}(2n,k).
\end{align}
Now, observe that the equality $p_\mathcal{A}(2n,k)=p_\mathcal{B}(n,k)$ holds for every integer $n$, where $\mathcal{B}=(1,\textcolor{blue}{1},\textcolor{red}{1},\ldots)$. Therefore, as a consequence of the equality (\ref{formula11}), we may reduce the problem to
\begin{align*}
    \binom{2n+k-1}{k-1}^2>\binom{3n+k-1}{k-1}\binom{n+k-1}{k-1}.
\end{align*}
Finally, it might be easily checked that the leading coefficient on the left hand side is $4^{k-1}/\left((k-1)!\right)^2$, while on the right hand side it is $3^{k-1}/\left((k-1)!\right)^2$, and both of them stand next to $n^{2k-2}$. Hence, for each $k>1$, we get that the inequality (\ref{ex1}) is satisfied for all but finitely many positive integers $n$.}
\end{ex}

The above example shows that the inequality (\ref{problem2}) is not a really interesting problem for the restricted partition function, despite the fact that its analogue for $p(n)$ is (see, \cite{Chen, DSP}).

In the last part of this section, we present a complete solution of the inequality (\ref{problem3}). It turns out to be a more fascinating challenge than the previous ones, as the following result suggests.

\begin{thm}\label{thm2}
    Let $\mathcal{A}=\left(a_i\right)_{i=1}^\infty$ and $\alpha\in\mathbb{R}_+$ be fixed. The sequence $\left(\frac{p_\mathcal{A}(n,k)}{n^\alpha}\right)_{n=1}^\infty$ is eventually log-concave if and only if we have $k>\alpha+1$ and either $2\leqslant k\leqslant3$ and $a_1=a_2=a_3=1$ or $k\geqslant4$ and $\gcd A=1$ for all $(k-2)$-multisubsets $A$ of $\{a_1,a_2,\ldots,a_k\}$.
    
\end{thm}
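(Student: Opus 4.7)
My plan is to rewrite log-concavity of $\bigl(p_\mathcal{A}(n,k)/n^{\alpha}\bigr)_n$ as
$$\frac{p_\mathcal{A}^{2}(n,k)}{p_\mathcal{A}(n+1,k)\,p_\mathcal{A}(n-1,k)} > \left(\frac{n^{2}}{n^{2}-1}\right)^{\!\alpha}$$
and to compare both sides asymptotically. Since the right-hand side equals $1+\alpha/n^{2}+O(1/n^{4})$, the question reduces to extracting the leading behavior of the intrinsic log-concavity of $p_\mathcal{A}(n,k)$, which is controlled by the coefficient of $1/n^{2}$ in $-\Delta^{2}\log p_\mathcal{A}(n,k)$ and, in the borderline case, by the coefficient of $1/n^{3}$.

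For \textbf{sufficiency} I would split into the same two cases that arise in Theorem \ref{thm1}. When $k\in\{2,3\}$ and $a_1=\cdots=a_k=1$, the identity $p_\mathcal{A}(n,k)=\binom{n+k-1}{k-1}$ reduces the inequality to an explicit rational expression (for instance, when $k=3$ the ratio on the left equals $(n+1)(n+2)/(n(n+3))$), which I would compare directly with $(n^{2}/(n^{2}-1))^{\alpha}$ and show dominates whenever $\alpha<k-1$. When $k\geqslant 4$ and every $(k-2)$-multisubset has $\gcd$ equal to one, I would apply Proposition \ref{Proposition4} with $j=k-2$ to sandwich $p_\mathcal{A}(n,k)$ between $P(n)\pm F n^{k-4}$, where $P(n)=c_{k-1}n^{k-1}+c_{k-2}n^{k-2}+c_{k-3}n^{k-3}$, and verify
$$\bigl(P(n)-Fn^{k-4}\bigr)^{2}(n-1)^{\alpha}(n+1)^{\alpha} > \bigl(P(n+1)+F(n+1)^{k-4}\bigr)\bigl(P(n-1)+F(n-1)^{k-4}\bigr)\,n^{2\alpha}$$
past an explicit threshold in $n$. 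A routine expansion shows the difference of the two sides is dominated, to leading order, by a positive multiple of $(k-1-\alpha)\,c_{k-1}^{2}\,n^{2k-4}$, which is strictly positive once $k-1>\alpha$, and the inequality then follows in the spirit of Theorem \ref{log3}.

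For \textbf{necessity} I argue contrapositively in two sub-cases. If the gcd hypothesis fails (so we lie outside either branch of the theorem's classification), Corollary \ref{log3cor} furnishes infinitely many $n$ with $p_\mathcal{A}^{2}(n,k)\leqslant p_\mathcal{A}(n+1,k)\,p_\mathcal{A}(n-1,k)$. Because $(n^{\alpha})_n$ is itself log-concave, namely $n^{2\alpha}>(n^{2}-1)^{\alpha}=(n-1)^{\alpha}(n+1)^{\alpha}$ for $\alpha>0$, one obtains
$$\frac{p_\mathcal{A}^{2}(n,k)}{n^{2\alpha}} \leqslant \frac{p_\mathcal{A}(n+1,k)\,p_\mathcal{A}(n-1,k)}{n^{2\alpha}} < \frac{p_\mathcal{A}(n+1,k)\,p_\mathcal{A}(n-1,k)}{(n-1)^{\alpha}(n+1)^{\alpha}}$$
at those same indices, so the weighted sequence also fails log-concavity. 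If instead the gcd hypothesis holds but $k\leqslant\alpha+1$, I use Almkvist's Theorem \ref{2.4} to write $\log p_\mathcal{A}(n,k)=\mathrm{const}+(k-1)\log(n+\sigma/2)+O(1/n^{2})$ with $\sigma=a_{1}+\cdots+a_{k}>0$; its second difference equals $-(k-1)/n^{2}+(k-1)\sigma/n^{3}+O(1/n^{4})$, and subtracting $\alpha\,\Delta^{2}\log n=-\alpha/n^{2}+O(1/n^{4})$ gives the net value $(\alpha-(k-1))/n^{2}+(k-1)\sigma/n^{3}+O(1/n^{4})$. This is eventually positive whenever $\alpha>k-1$, and remains positive at $\alpha=k-1$ thanks to $\sigma>0$, so log-concavity fails for all sufficiently large $n$.

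The \textbf{main obstacle} is precisely this borderline case $\alpha=k-1$: the principal $1/n^{2}$ contributions cancel, and the verdict hinges on the next coefficient $(k-1)\sigma/n^{3}$. Extracting it rigorously requires keeping the shift $n+\sigma/2$ from Almkvist's expansion intact rather than letting it vanish into the $O(n^{k-4})$ remainder of Proposition \ref{Proposition4}; otherwise the decisive term would be lost in the error. Away from the border everything is bookkeeping-heavy but routine, and, as in Theorems \ref{log3} and \ref{thm1}, the cleanest route is to reduce the final comparison to a single polynomial inequality in $n$ and certify the explicit threshold from which it becomes valid.
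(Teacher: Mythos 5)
Your overall strategy mirrors the paper's (compare leading asymptotics, with the decisive coefficient $(k-1-\alpha)a^2$ and a separate analysis of the borderline case), and your necessity argument when the gcd/all-ones hypothesis fails is actually a nice shortcut: deducing failure of the weighted inequality from Corollary \ref{log3cor} together with $n^{2\alpha}>(n^2-1)^\alpha$ is cleaner than the paper's re-derivation via leading coefficients of generalized quasi-polynomials. The sufficiency sketch is also sound (modulo the harmless slip that the dominant term should be $(k-1-\alpha)c_{k-1}^2 n^{2k+2\alpha-4}$, not $n^{2k-4}$).

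However, there is a genuine gap in the borderline case $\alpha=k-1$, and you have located the difficulty in the wrong place. Under the hypothesis that $\gcd A=1$ only for $(k-2)$-multisubsets, Theorem \ref{2.4} (with $j=k-2$) leaves a remainder $c_{k-4}(n)n^{k-4}+\cdots$ whose coefficients genuinely depend on the residue class of $n\pmod*{\lcm(a_1,\ldots,a_k)}$. Writing $p_\mathcal{A}(n,k)=P(n)+E(n)$ with $P$ the smooth part, one has $\log p_\mathcal{A}(n,k)=\log P(n)+E(n)/P(n)+\cdots$ with $E(n)/P(n)=c_{k-4}(n)/(an^3)+O(1/n^4)$; since $c_{k-4}(\cdot)$ is merely periodic and not smooth, its second difference contributes $\bigl(c_{k-4}(n+1)+c_{k-4}(n-1)-2c_{k-4}(n)\bigr)/(an^3)$, which is of order exactly $1/n^3$ --- the same order as the term $(k-1)\sigma/n^3$ your argument hinges on. So your claimed expansion $\Delta^2\log p_\mathcal{A}(n,k)=-(k-1)/n^2+(k-1)\sigma/n^3+O(1/n^4)$ is false in general, and the conclusion ``log-concavity fails for all sufficiently large $n$'' at $\alpha=k-1$ does not follow. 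The paper repairs this by sub-splitting: if $c_{k-4}(n)$ is residue-independent (equivalently, by Proposition \ref{pr5.7}, $\gcd A=1$ for all $(k-3)$-multisubsets), the smooth computation goes through and yields the negative leading coefficient $-2ab$; otherwise one selects residue classes making the relevant coefficient of $\widehat{\mathcal{L}}$ negative. Your argument can be salvaged without that split by noting that $c_{k-4}(n+1)+c_{k-4}(n-1)-2c_{k-4}(n)$ telescopes to zero over a full period, so it is $\geqslant0$ for at least one residue class, whence the net second difference is $\geqslant(k-1)\sigma/n^3>0$ for infinitely many $n$ --- but some such additional argument is indispensable, and the conclusion must be weakened to ``fails infinitely often.''
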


\begin{proof} 
    Let $\mathcal{A}=\left(a_i\right)_{i=1}^\infty$ and $\alpha$ be as in the statement. At the beginning, we prove the implication to the left hand side. We just need to solve the inequality 
    \begin{align*}
        (n^2-1)^\alpha p_\mathcal{A}^2(n,k)-n^{2\alpha}p_\mathcal{A}(n+1,k)p_\mathcal{A}(n-1,k)>0.
    \end{align*}
    Our first goal is to bound the left hand side from below. In order to do that we apply the generalized binomial theorem which maintains that
    \begin{align*}
        (n^2-1)^\alpha=\sum_{j=0}^\infty(-1)^j\binom{\alpha}{j}n^{2(\alpha-j)},
    \end{align*}
    where 
    \begin{align*}
        \binom{\alpha}{j}=\frac{\alpha(\alpha-1)\cdots(\alpha-j+1)}{j!}.
    \end{align*}
    Since we must have $n\geqslant2$, it is clear that the following inequalities
    \begin{align*}
        (n^2-1)^\alpha&>\sum_{j=0}^{\floor{\alpha}+1}(-1)^j\binom{\alpha}{j}n^{2(\alpha-j)}-\binom{\alpha}{\floor{\alpha}}n^{2(\alpha-\floor{\alpha}-2)}\sum_{j=0}^\infty n^{-2j}\\
        &>\sum_{j=0}^{\floor{\alpha}+1}(-1)^j\binom{\alpha}{j}n^{2(\alpha-j)}-2\binom{\alpha}{\floor{\alpha}}n^{2(\alpha-\floor{\alpha}-2)}=f(n)
    \end{align*}
    are valid. We consider two cases depending on both the value of $k>\alpha+1$ and the multiset $\{a_1,a_2,\ldots,a_k\}$.
    \begin{case}\label{thm2case1}
        First, we assume that $k\geqslant4$ and $\gcd A=1$ for all $(k-2)$-multisubsets $A$ of $\{a_1,a_2,\ldots,a_k\}$. It is enough to deal with 
    \begin{align*}
 f(n)p_\mathcal{A}^2(n,k)-n^{2\alpha}p_\mathcal{A}(n+1,k)p_\mathcal{A}(n-1,k)>0.
    \end{align*}
    From the assumptions on the multiset $\{a_1,\ldots,a_k\}$ and Theorem \ref{2.4}, we get that 
    \begin{align*}
f(n)p_\mathcal{A}^2(n,k)-n^{2\alpha}p_\mathcal{A}(n+1,k)p_\mathcal{A}(n-1,k)=(k-1-\alpha)a^2n^{2k+2\alpha-4}+\widetilde{f}(n), 
    \end{align*}
   where $$a=\frac{1}{(k-1)!\prod_{i=1}^k{a_i}}$$
    and $\widetilde{f}(n)$ is a generalized quasi-polynomial of degree $2k+2\alpha-5$, that is an expression of the form 
    $$\widetilde{f}(n)=n^{2\left(\alpha-\floor{\alpha}-2\right)}\sum_{j=0}^{2k+2\floor{\alpha}-1}t_j(n)n^j,$$
    where each $t_m(n)$ is some real number depending on the residue class of \linebreak $n\pmod*{\lcm(a_1,a_2,\ldots,a_k)}$. Since $k>\alpha+1$, the leading coefficient $(k-1-\alpha)a^2$ is positive. Therefore, we obtain that the sequence $p_\mathcal{A}(n,k)/n^\alpha$ is log-concave for all but finitely many positive integers $n$.
    \end{case}
    \begin{case}
        Now, let us assume that $2\leqslant k\leqslant3$ and $a_1=a_2=a_3=1$. These two alternatives might be investigated separately by repeating the reasoning from Case \ref{thm2case1} and employing the formula (\ref{formula11}). We encourage the reader to check all the details on their own.
    \end{case}

    To prove the implication to the right hand side let us fix $k\in\mathbb{N}_+$. It is clear that the sequence $\left(p_\mathcal{A}(n,1)/n^\alpha\right)_{n=1}^\infty$ can not be log-concave. Therefore, let $k\geqslant2$ and suppose, for contradiction, that the assumptions on the numbers $a_1,a_2,\ldots,a_k$ and $k$ do not hold. Before we go into the main part of the proof, let us observe that we may also determine an upper bound for $(n^2-1)^\alpha$. Analogously to the computations from the beginning, one can easily get that
    \begin{align*}
        (n^2-1)^\alpha<\sum_{j=0}^{\floor{\alpha}+1}(-1)^j\binom{\alpha}{j}n^{2(\alpha-j)}+2\binom{\alpha}{\floor{\alpha}}n^{2(\alpha-\floor{\alpha}-2)}=g(n).
    \end{align*}
    Once again we have a few possibilities to examine. In order to make the text more transparent, we label each of the cases by its general assumptions.
    \begin{caso}[$4\leqslant k<\alpha+1$]\label{caso1}
    At first, we also assume that $\gcd A=1$ for all $(k-2)$-multisubsets $A$ of $\{a_1,a_2,\ldots,a_k\}$. In order to show that $p_\mathcal{A}(n,k)/n^\alpha$ can not be log-concave, it suffices to note that the leading coefficient of
    \begin{align*}
        g(n)p_\mathcal{A}^2(n,k)-n^{2\alpha}p_\mathcal{A}(n+1,k)p_\mathcal{A}(n-1,k)=(k-1-\alpha)a^2n^{2k+2\alpha-4}+\widetilde{g}(n),
    \end{align*}
    where $a$ is as before and $\widetilde{g}(n)$ is some generalized quasi-polynomial of degree $2k-2\alpha-5$, is negative.

    On the other hand, if the assumptions on the multiset $\{a_1,a_2,\ldots,a_k\}$ do not hold, then Proposition \ref{pr5.7} asserts that 
\begin{align*}
    p_\mathcal{A}(n,k)=c_{k-1}(n)n^{k-1}+c_{k-2}(n)n^{k-2}+c_{k-3}(n)n^{k-3}+\cdots+c_0(n),
\end{align*}
where at least one of $c_{k-1}(n)$, $c_{k-2}(n)$, or $c_{k-3}(n)$ depends on a residue class of $n\pmod*{\lcm(a_1,\ldots,a_k)}$. Let $t\in\{k-3,k-2,k-1\}$ be the largest index with this property. Now, it is enough to take any $n\pmod*{\lcm(a_1,\ldots,a_k)}$ such that $c_t(n)$ is the smallest, and at least one of $c_t(n+1)$ or $c_t(n-1)$ is strictly larger than $c_t(n)$. If we do so, then the leading coefficient of the generalized quasi-polynomial
\begin{align*}
    g(n)p_\mathcal{A}^2(n,k)-n^{2\alpha}p_\mathcal{A}(n+1,k)p_\mathcal{A}(n-1,k)
\end{align*}
is negative, and, in particular, the sequence $\left(p_\mathcal{A}(n,k)/n^\alpha\right)_{n=1}^\infty$ can not be log-concave for infinitely many values of $n$.
    \end{caso}
    \begin{caso}[$4\leqslant k=\alpha+1$]
        In contrast to Case \ref{caso1}, if we assume that $\gcd A=1$ for all $(k-2)$-multisubsets $A$ of $\{a_1,a_2,\ldots,a_k\}$, then we obtain that
        \begin{align*}
        g(n)p_\mathcal{A}^2(n,k)-n^{2\alpha}p_\mathcal{A}(n+1,k)p_\mathcal{A}(n-1,k)=\widetilde{g}(n),
    \end{align*}
    where $\widetilde{g}(n)$ is as before. Hence, we need to say something about the leading coefficient of $\widetilde{g}(n)$. There are two possibilities: either the coefficient $c_{k-4}(n)$ of 
    \begin{align*}
    p_\mathcal{A}(n,k)=c_{k-1}(n)n^{k-1}+c_{k-2}(n)n^{k-2}+c_{k-3}(n)n^{k-3}+\cdots+c_0(n)
\end{align*}
is independent of the residue class of $n\pmod*{\lcm(a_1,\ldots,a_k)}$ or not. If $c_{k-4}(n)$ does not depend on the residue class of $n\pmod*{\lcm (a_1,\ldots,a_k)}$, then Proposition \ref{pr5.7} points out that $\gcd A=1$ for any $(k-3)$-multisubset $A$ of $\{a_1,a_2,\ldots,a_k\}$, and we get that
\begin{align*}
    \widetilde{g}(n)=-2abn^{2k+2\alpha-5}+O(n^{2k+2\alpha-6}),
\end{align*}
where $a$ is as before and 
$$b=\frac{a_1+a_2+\cdots+a_k}{2(k-2)!\prod_{i=1}^ka_i}.$$
Thus, the leading coefficient of $\widetilde{g}(n)$ is negative, as required. Let us note here that the case of $k=4$ is also included above. In that situation we must have $a_1=a_2=a_3=a_4=1$ ($\gcd(m)=m$ for any $m\in\mathbb{N}_+$), and $p_\mathcal{A}(n,k)$ is given by (\ref{formula11}).

On the other hand, if at least one of the coefficients $c_{i}(n)$ for $i\in\{k-4,k-3,k-2,k-1\}$ depends on the residue class of $n\pmod*{\lcm(a_1,\ldots,a_k)}$, then we present analogous reasoning to that one from the second part of Case \ref{caso1}, and obtain that the sequence $\left(p_\mathcal{A}(n,k)/n^\alpha\right)_{n=1}^\infty$ can not be log-concave for infinitely many values of $n$. 
    \end{caso}
    \begin{caso}[$2\leqslant k\leqslant3 \text{ }\&\text{ } k\leqslant\alpha+1$]
    Applying preceding methods, it is not difficult to deduce the required result. We leave this case as an exercise for the reader.
    \end{caso}
\end{proof}

At the end of this section, we illustrate how the above criterion works in practice. But, before we do so let us introduce an additional notation which allows us to make the text more clear.
\begin{df}
For given parameters $\mathcal{A}=\left(a_i\right)_{i=1}^\infty$, $k\geqslant1$ and $\alpha\in\mathbb{R}_+$, we set
\begin{align*}
    \Delta_{\mathcal{A},k}^\alpha(n)=(n^2-1)^{\alpha}p_\mathcal{A}^2(n,k)-n^{2\alpha}p_\mathcal{A}(n+1,k)p_\mathcal{A}(n-1,k).
\end{align*}
\end{df}

It is clear that the sequence $\left(p_\mathcal{A}(n,k)/n^\alpha\right)_{n=1}^\infty$ is log-concave if and only if the corresponding function $\Delta_{\mathcal{A},k}^\alpha(n)$ is positive for every $n\geqslant1$. 

\begin{ex}\label{Example2}
{\rm Let $\mathcal{A}_1=(1,2,3,4,5,6,\ldots)$ be a sequence of consecutive positive integers, and let $\alpha_1=4.99$. Computations carried out in Wolfram Mathematica \cite{WM} exhibit us the behavior of $\Delta_{\mathcal{A}_1,k}^{\alpha_1}(n)$ for $4\leqslant k\leqslant7$.
\begin{center}
\begin{figure}[!htb]
   \begin{minipage}{0.47\textwidth}
     \centering
     \includegraphics[width=.9\linewidth]{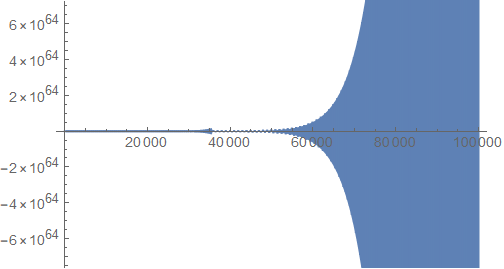}
     \caption{Values of $\Delta_{\mathcal{A}_1,4}^{\alpha_1}(n)$ for $2\leqslant n \leqslant10^5$}
   \end{minipage}\hfill
   \begin{minipage}{0.47\textwidth}
     \centering
     \includegraphics[width=.9\linewidth]{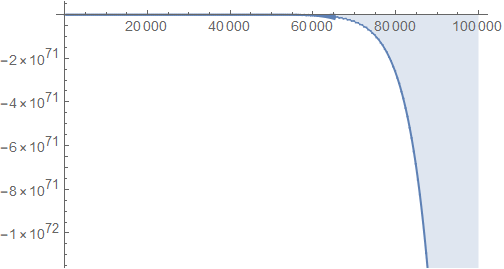}
     \caption{Values of $\Delta_{\mathcal{A}_1,5}^{\alpha_1}(n)$ for $2\leqslant n \leqslant10^5$}
   \end{minipage}
\end{figure}
\begin{figure}[!htb]
   \begin{minipage}{0.47\textwidth}
     \centering
     \includegraphics[width=.9\linewidth]{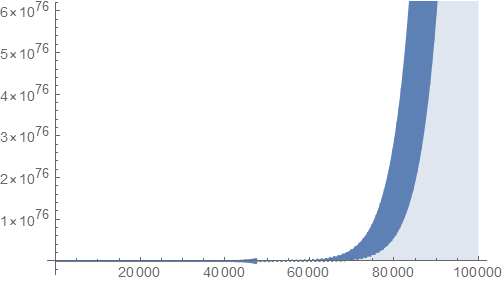}
     \caption{Values of $\Delta_{\mathcal{A}_1,6}^{\alpha_1}(n)$ for $2\leqslant n \leqslant10^5$}
   \end{minipage}\hfill
   \begin{minipage}{0.47\textwidth}
     \centering
     \includegraphics[width=.9\linewidth]{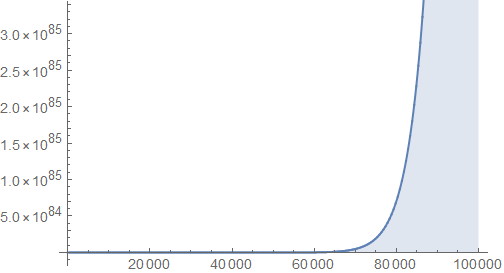}
     \caption{Values of $\Delta_{\mathcal{A}_1,7}^{\alpha_1}(n)$ for $2\leqslant n \leqslant10^5$}
   \end{minipage}
\end{figure}
\end{center}
These figures agree with Theorem \ref{thm2}. Moreover, for every $k\geqslant6$ one can explicitly determine when the inequality $\Delta_{\mathcal{A}_1,k}^{\alpha_1}(n)>0$ is true. Now, it is also interesting to illustrate the behavior of $\Delta_{\mathcal{A}_1,k}^{\alpha_2}(n)$, where $\alpha_2=5$ for some small numbers $k$.
\begin{center}
\begin{figure}[!htb]
   \begin{minipage}{0.47\textwidth}
     \centering
     \includegraphics[width=.9\linewidth]{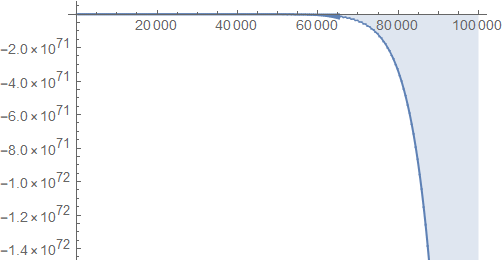}
     \caption{Values of $\Delta_{\mathcal{A}_1,5}^{\alpha_2}(n)$ for $2\leqslant n \leqslant10^5$}
   \end{minipage}\hfill
   \begin{minipage}{0.47\textwidth}
     \centering
     \includegraphics[width=.9\linewidth]{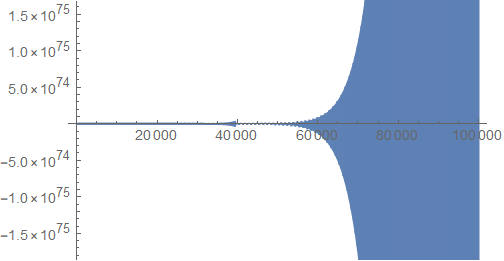}
     \caption{Values of $\Delta_{\mathcal{A}_1,6}^{\alpha_2}(n)$ for $2\leqslant n \leqslant10^5$}
   \end{minipage}
\end{figure}
\begin{figure}[!htb]
   \begin{minipage}{0.47\textwidth}
     \centering
     \includegraphics[width=.9\linewidth]{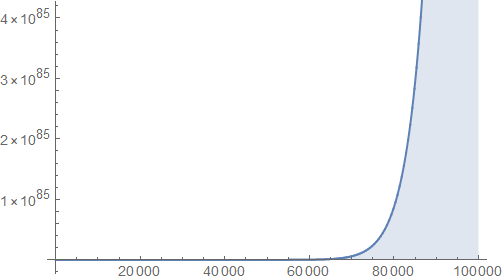}
     \caption{Values of $\Delta_{\mathcal{A}_1,7}^{\alpha_2}(n)$ for $2\leqslant n \leqslant10^5$}
   \end{minipage}\hfill
   \begin{minipage}{0.47\textwidth}
     \centering
     \includegraphics[width=.9\linewidth]{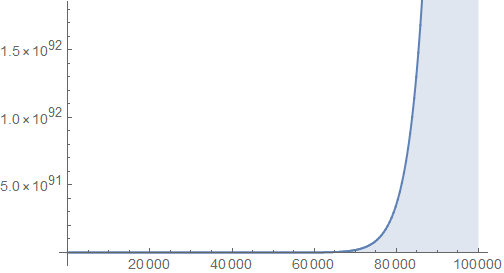}
     \caption{Values of $\Delta_{\mathcal{A}_1,8}^{\alpha_2}(n)$ for $2\leqslant n \leqslant10^5$}
   \end{minipage}
\end{figure}
\end{center}
Once again we observe that all the figures concur with our criterion. Furthermore, it follows from the proof of Theorem \ref{thm2} that there are infinitely many positive integers $n$ such that $\Delta_{\mathcal{A}_1,6}^{\alpha_2}(n)\leqslant0$.}
\end{ex}

\section{The \texorpdfstring{$r$}{TEXT}-log-concavity of quasi-polynomials like functions}

In this section, we explore a bit more complex problem --- the so-called $r$-log-concavity of $p_\mathcal{A}(n,k)$. Let us recall that a sequence of real numbers $\omega=\left(w_i\right)_{i=1}^\infty$ is said to be (asymptotically) $r$-log-concave for $r\in\mathbb{N}_+$, if for some $N$ all terms of the sequences
\begin{align*}
    \left(\widehat{\mathcal{L}}\omega\right)_i,\left(\widehat{\mathcal{L}}^2\omega\right)_i,\ldots,\left(\widehat{\mathcal{L}}^r\omega\right)_i
\end{align*}
are positive for every $i\geqslant N$, where
\begin{align*}
    \widehat{\mathcal{L}}\omega=\left(w_{i+1}^2-w_{i}w_{i+2}\right)_{i=0}^\infty \text{ and } \widehat{\mathcal{L}}^k\omega=\widehat{\mathcal{L}}\left(\widehat{\mathcal{L}}^{k-1}\omega\right)
\end{align*}
for $k\in\{2,3,\ldots,r\}$.
As it was mentioned in Introduction, the $1$-log-concavity of $(w_i)_{i=0}^\infty$ corresponds to the inequality $w_{n+1}^2-w_{n}w_{n+2}>0$; the $2$-log-concavity in addition demands that the following inequality
$$w_{n+2}^4-2w_{n+1}w_{n+2}^2w_{n+3}+w_{n+1}^2w_{n+2}w_{n+4}+w_{n}w_{n+2}w_{n+3}^2-w_{n}w_{n+2}^2w_{n+4}>0$$
is satisfied for all $n\geqslant N$. We resign from demonstrating other inequalities for $r\geqslant3$. 

The question arises whether we can successfully apply the methods from Section 2 to find out a convenient criterion for the $r$-log-concavity of the restricted partition function $p_\mathcal{A}(n,k)$. In fact, it is not the best idea to bound the left hand side of each of the inequalities from above and below. It is sill possible to do so for the $2$-log-concavity problem, but what about the $100$-log-concavity of $p_\mathcal{A}(n,k)$? Nota bene, if we take advantage of the aforementioned approach to resolve the $2$-log-concave case, then we immediately get the following. 

\begin{pr}\label{4.1}
    Let $\mathcal{A}=\left(a_i\right)_{i=1}^\infty$ and $k\geqslant7$ be fixed. If $\gcd A=1$ for every $(k-6)$-multisubset $A\subset\{a_1,a_2,\ldots,a_k\}$, then $\left(p_\mathcal{A}(n,k)\right)_{n=0}^\infty$ is asymptotically $2$-log-concave.
\end{pr}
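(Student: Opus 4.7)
The plan is to adapt the strategy of Section 3 to the degree-four expression $\widehat{\mathcal{L}}^2 p_\mathcal{A}(n,k)$. Specifically, I would split $p_\mathcal{A}(n,k) = P(n) + R(n)$ where $P$ is a genuine polynomial with residue-independent coefficients and $R$ is a controlled remainder; compute $\widehat{\mathcal{L}}^2 P$ exactly to identify the leading behavior; and verify that every cross term involving $R$ has strictly smaller order. For this to succeed the top coefficients of $p_\mathcal{A}(n,k)$ must be residue-independent down to a sufficient depth, and the hypothesis $\gcd A = 1$ for every $(k-6)$-multisubset is calibrated exactly to provide this.

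Assuming first $k \geq 8$, I would apply Proposition \ref{Proposition4} with $j = k-6$ to obtain the decomposition
\begin{equation*}
p_\mathcal{A}(n,k) = P(n) + R(n), \qquad P(n) = \sum_{i=0}^{6} c_{k-1-i}\, n^{k-1-i}, \qquad |R(n)| < F n^{k-8},
\end{equation*}
in which the seven coefficients $c_{k-1}, \ldots, c_{k-7}$ are supplied by Theorem \ref{2.4} and are independent of the residue class of $n \pmod*{\lcm(a_1,\ldots,a_k)}$; in particular $c_{k-1} = 1/((k-1)!\prod_{i=1}^{k} a_i) > 0$. A direct computation shows that for any polynomial $Q$ of degree $d$ with positive leading coefficient $a$, the sequence $\widehat{\mathcal{L}} Q$ is a polynomial of degree $2d - 2$ with positive leading coefficient $d a^2$; iterating once, $\widehat{\mathcal{L}}^2 P$ is a polynomial of degree $4(k-1) - 6 = 4k - 10$ with positive leading coefficient $2(k-2)(k-1)^2 c_{k-1}^4$.

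To pass from $\widehat{\mathcal{L}}^2 P$ to $\widehat{\mathcal{L}}^2 p_\mathcal{A}$, I would expand $\widehat{\mathcal{L}}^2 (P + R)$ as a degree-four polynomial in the ten values $P(n+j), R(n+j)$ for $j = 0, 1, 2, 3, 4$. Every monomial containing at least one factor $R(n+j)$ is bounded by $O(n^{3(k-1)}) \cdot O(n^{k-8}) = O(n^{4k-11})$, so
\begin{equation*}
\widehat{\mathcal{L}}^2 p_\mathcal{A}(n,k) = 2(k-2)(k-1)^2 c_{k-1}^4\, n^{4k-10} + O(n^{4k-11}),
\end{equation*}
which is positive for all sufficiently large $n$. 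Combined with the $1$-log-concavity $\widehat{\mathcal{L}} p_\mathcal{A}(n,k) > 0$ delivered by Theorem \ref{log3} (whose $(k-2)$-multisubset hypothesis is strictly weaker than the present one), this yields asymptotic $2$-log-concavity. The residual case $k = 7$ is degenerate: the hypothesis on $1$-multisubsets forces $a_1 = \cdots = a_7 = 1$, so $p_\mathcal{A}(n,7) = \binom{n+6}{6}$ is already a polynomial and the argument runs with $R \equiv 0$.

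The main obstacle is the very narrow degree margin in the last step: $\widehat{\mathcal{L}}^2 P$ has degree $4k - 10$ while the $R$-contribution has degree at most $4k - 11$, a gap of exactly one. This is precisely why the hypothesis demands $(k-6)$-multisubsets, equivalently seven residue-independent leading coefficients; any weaker assumption (say, $(k-5)$-multisubsets) would push the error up to $O(n^{4k-10})$ and leave the positivity of the leading term unresolved.
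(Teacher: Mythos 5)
Your proposal is correct and follows essentially the same route as the paper: both invoke Proposition \ref{Proposition4} with $j=k-6$ to secure seven residue-independent leading coefficients plus an $O(n^{k-8})$ error, and both reduce the problem to checking that the degree-$(4k-10)$ main term of the $\widehat{\mathcal{L}}^2$ expression dominates the $O(n^{4k-11})$ contribution of the error. The only differences are cosmetic --- you use an additive decomposition $p=P+R$ with a multilinear expansion where the paper sandwiches $p_\mathcal{A}(n,k)$ between two polynomials $f<p_\mathcal{A}<g$ and distributes $f$ or $g$ according to the sign of each term --- and your general leading coefficient $2(k-2)(k-1)^2c_{k-1}^4$ together with the separate treatment of $k=7$ (where Proposition \ref{Proposition4} formally requires $j\geqslant2$) is, if anything, slightly more careful than the printed proof.
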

\begin{proof}
    The assumptions from the statement together with Proposition \ref{Proposition4} assert that $p_\mathcal{A}(n,k)$ might be bounded from above and below by
    \begin{align*}
        f(n)<p_\mathcal{A}(n,k)<g(n),
    \end{align*}
    where $f(n)=c_{k-1}n^{k-1}+\cdots+c_{k-7}n^{k-7}-Fn^{k-8}$ and $g(n)=c_{k-1}n^{k-1}+\cdots+c_{k-7}n^{k-7}+Fn^{k-8}$, where all the coefficients $c_i$ are uniquely determined by Theorem \ref{2.4}, while $$F=\frac{\prod_{i=1}^k(1+iDk)}{k!\prod_{i=1}^ka_i}$$ with $D=\lcm(a_1,a_2,\ldots,a_k)$. Next, it is enough to examine the inequality of the form
    \begin{align*}
        f^4(n+2)&-2g(n+1)g^2(n+2)g(n+3)+f^2(n+1)f(n+2)f(n+4)\\&+f(n)f(n+2)f^2(n+3)-g(n)g^2(n+2)g(n+4)>0.
    \end{align*}
    After some tiresome calculations one can determine that the leading coefficient of the left hand side is equal to $588c_{k-1}$, where $$c_{k-1}=\frac{1}{(k-1)!\prod_{i=1}^ka_i}.$$
    Hence, the sequence $\left(p_\mathcal{A}(n,k)\right)_{n=0}^\infty$ must be $2$-log-concave for all but finitely many values of $n$, as required.
\end{proof}

Theorem \ref{log3} and Proposition \ref{4.1} can mislead us to believe that in order to receive the asymptotic $r$-log-concavity of $p_\mathcal{A}(n,k)$ (for some $r$), we need to require that $\gcd A=1$ for every $(k-(r+1)!)$-multisubset $A\subset\{a_1,a_2,\ldots,a_k\}$. But, it is not true as we see in the following.
\begin{ex}
   {\rm Let $\mathcal{A}_1=(1,2,3,4,5,6,\ldots)$ be a sequence of consecutive positive integers, as before. Numerical calculations made in Mathematica \cite{WM} illustrate how the sequence $\widehat{\mathcal{L}}_{\mathcal{A}_1,k}^2=\left(\widehat{\mathcal{L}}^2\left(p_{\mathcal{A}_1}(n,k)\right)_n\right)_{n=0}^\infty$ behaves for $7\leqslant k\leqslant10$ and $2\leqslant n\leqslant10^5$.
\begin{center}
\begin{figure}[!htb]
   \begin{minipage}{0.47\textwidth}
     \centering
     \includegraphics[width=.9\linewidth]{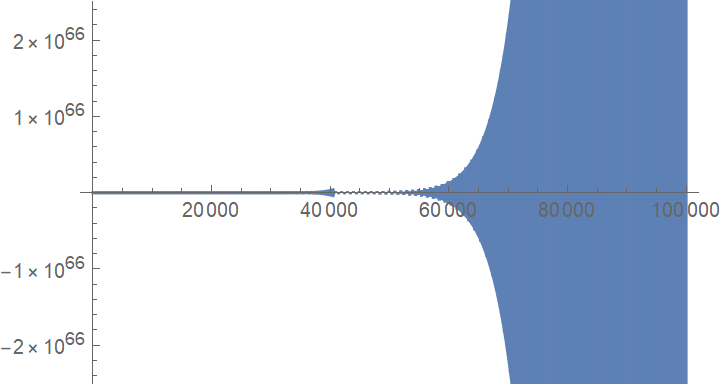}
     \caption{Values of $\left(\widehat{\mathcal{L}}_{\mathcal{A}_1,7}^2\right)_n$ for $2\leqslant n \leqslant10^5$}
   \end{minipage}\hfill
   \begin{minipage}{0.47\textwidth}
     \centering
     \includegraphics[width=.9\linewidth]{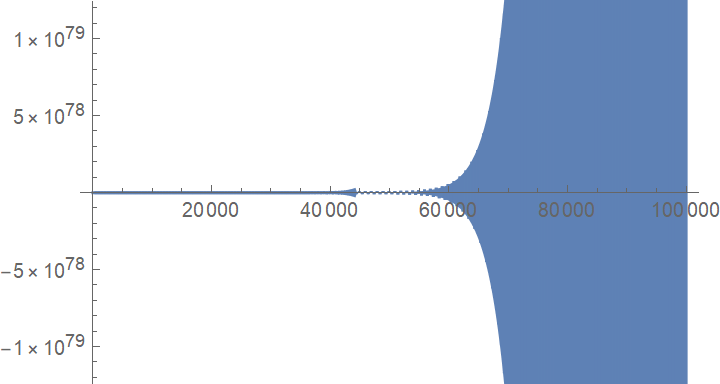}
     \caption{Values of $\left(\widehat{\mathcal{L}}_{\mathcal{A}_1,8}^2\right)_n$ for $2\leqslant n \leqslant10^5$}
   \end{minipage}
\end{figure}
\begin{figure}[!htb]
   \begin{minipage}{0.47\textwidth}
     \centering
     \includegraphics[width=.9\linewidth]{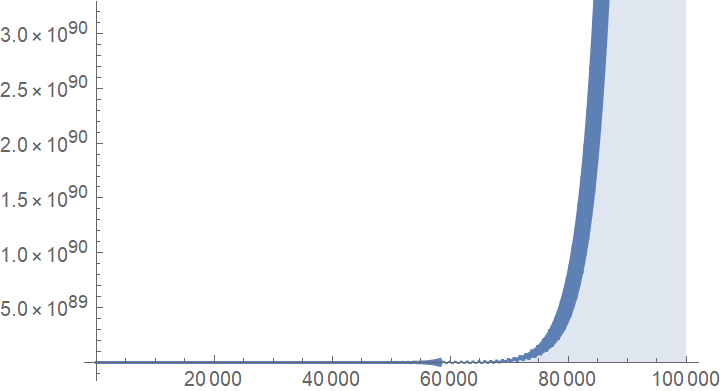}
     \caption{Values of $\left(\widehat{\mathcal{L}}_{\mathcal{A}_1,9}^2\right)_n$ for $2\leqslant n \leqslant10^5$}
   \end{minipage}\hfill
   \begin{minipage}{0.47\textwidth}
     \centering
     \includegraphics[width=.9\linewidth]{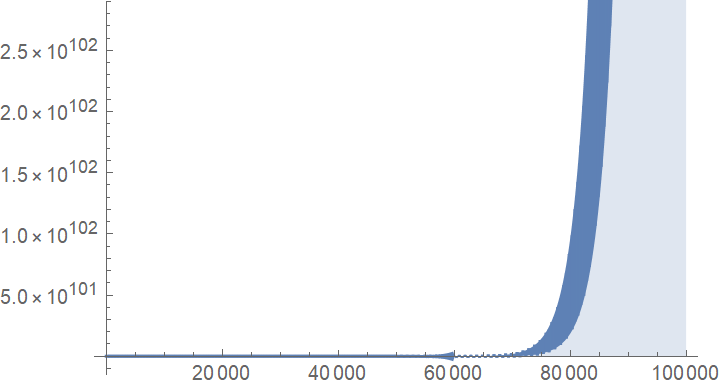}
     \caption{Values of $\left(\widehat{\mathcal{L}}_{\mathcal{A}_1,10}^2\right)_n$ for $2\leqslant n \leqslant10^5$}
   \end{minipage}
\end{figure}
\end{center}
By looking at both Figure 9 and Figure 10, we can not be completely certain that their corresponding restricted partitions functions are not $2$-log-concave for sufficiently large values of $n$. On the other hand, Figure 11 as well as Figure 12 suggest that the sequence $\widehat{\mathcal{L}}_{\mathcal{A}_1,k}^2(n)$ is (asymptotically) $2$-log-concave for all $k\geqslant9$, while Proposition \ref{4.1} only maintains that the property holds for all $k\geqslant13$ in this case.}
\end{ex}

Indeed, there is a bit more efficient criterion for the $r$-log concavity than Proposition \ref{4.1}. In order to find it, we apply a small modification of a very useful theorem obtained by Hou and Zhang \cite{Hou}. To state their result, we need a little preparation.

Let $\omega=\left(w_i\right)_{i=0}^\infty$ be a sequence of positive real numbers. We set
\begin{align*}
    \left(\mathcal{R}\omega\right)_i=\frac{w_{i+1}}{w_{i}}\hspace{0.3cm}\text{and}\hspace{0.2cm}\left(\mathcal{R}^2\omega\right)_i=\frac{\left(\mathcal{R}\omega\right)_{i+1}}{\left(\mathcal{R}\omega\right)_{i}}=\frac{w_iw_{i+2}}{w_{i+1}^2}.
\end{align*}
In particular, observe that the $1$-log-concavity of $\left(w_i\right)_{i=0}^\infty$ is equivalent to the statement $\left(\mathcal{R}^2w\right)_i<1$. Actually, it is also true for such a sequence of real numbers $\left(w_i\right)_{i=0}^\infty$ that both $\left(w_iw_{i+2}\right)_{i=0}^\infty$ and $\left(w_{i+1}^2\right)_{i=0}^\infty$ are positive.

Next, we assume that $\left(x_i\right)_{i=0}^\infty$ is a sequence of real numbers. Further, suppose that there are numbers $d_j$ and a $(m+1)$-tuple $\alpha=(\alpha_0,\alpha_1,\ldots,\alpha_m)$ such that
$$\alpha_0<\alpha_1<\cdots<\alpha_m$$
and
$$\lim_{n\to\infty}n^{\alpha_m}\left(x_n-\sum_{j=0}^m\frac{d_j}{n^{\alpha_j}}\right)=0.$$
The value
$$y_{n,\alpha}=\sum_{j=0}^m\frac{d_j}{n^{\alpha_j}}$$
is called the \textit{Puiseux-type} approximation of $x_n$ (of degree $\alpha_m$) and is denoted by $x_n\approx y_{n,\alpha}$. When it is clear from the context, we will write $y_n$ instead of $y_{n,\alpha}$. It turns out that only the first few summands of $y_{n}$ and $\alpha_m$ are important from our perspective. Therefore, we use the standard little-$o$ notation to write
$$x_n=y_{n}+o\left(\frac{1}{n^{\alpha_m}}\right)=\frac{d_0}{n^{\alpha_0}}+\frac{d_1}{n^{\alpha_1}}+\cdots+\frac{d_m}{n^{\alpha_m}}+o\left(\frac{1}{n^{\alpha_m}}\right).$$
Now, we are able to recall Hou and Zhang's criterion \cite{Hou}.

\begin{thm}{\label{HouZ}}
    Let $\omega=\left(w_i\right)_{i=0}^\infty$ be a sequence of positive real numbers such that $\left(\mathcal{R}^2\omega\right)_n$ has a Puiseux-type approximation $y_{n}$ of the form
    $$y_n=1+\frac{d_1}{n^{\alpha_1}}+\cdots+\frac{d_{m}}{n^{\alpha_m}},$$
    where $m\geqslant1$, $0<\alpha_1\leqslant\alpha_m$. If $d_1<0$ and $\alpha_1<2$, then $\left(w_i\right)_{i=0}^\infty$ is asymptotically $\floor{\alpha_m/\alpha_1}$-log-concave.
\end{thm}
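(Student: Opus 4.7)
The plan is to proceed by induction on $r=\floor{\alpha_m/\alpha_1}$, with the inductive engine being the following key lemma: if $\omega=(w_i)_{i=0}^\infty$ is a sequence of positive reals such that
$$(\mathcal{R}^2\omega)_n=1+\frac{d_1}{n^{\alpha_1}}+\cdots+\frac{d_m}{n^{\alpha_m}}+o\!\left(\frac{1}{n^{\alpha_m}}\right)$$
with $d_1<0$ and $\alpha_1<2$, then $(\widehat{\mathcal{L}}\omega)_n$ is positive for all large $n$, and $(\mathcal{R}^2\widehat{\mathcal{L}}\omega)_n$ admits a Puiseux-type approximation whose lowest-order correction is $2d_1/n^{\alpha_1}$ with maximum exponent $\alpha_m-\alpha_1$. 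Once this lemma is established, the induction is immediate: the base case $r=1$ (that is, $\alpha_1\leqslant\alpha_m<2\alpha_1$) follows since $(\mathcal{R}^2\omega)_n-1\sim d_1/n^{\alpha_1}<0$ forces $w_{n+1}^2>w_nw_{n+2}$ for large $n$. For the inductive step, the lemma tells us that $\widehat{\mathcal{L}}\omega$ satisfies the hypotheses of the theorem with $\alpha_m$ replaced by $\alpha_m-\alpha_1$, and since $\floor{(\alpha_m-\alpha_1)/\alpha_1}=r-1$, the induction hypothesis applied to $\widehat{\mathcal{L}}\omega$ yields positivity of $\widehat{\mathcal{L}}^2\omega,\ldots,\widehat{\mathcal{L}}^r\omega$.

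The core computation for the lemma begins with the identity $(\widehat{\mathcal{L}}\omega)_n=w_{n+1}^2(1-u_n)$, where $u_n:=(\mathcal{R}^2\omega)_n$. Positivity of $(\widehat{\mathcal{L}}\omega)_n$ for large $n$ is granted by $u_n<1$ (base-case reasoning). Then
$$(\mathcal{R}^2\widehat{\mathcal{L}}\omega)_n=\frac{(\widehat{\mathcal{L}}\omega)_n(\widehat{\mathcal{L}}\omega)_{n+2}}{(\widehat{\mathcal{L}}\omega)_{n+1}^2}=u_{n+1}^2\cdot\frac{(1-u_n)(1-u_{n+2})}{(1-u_{n+1})^2}.$$
Writing $1-u_n=cn^{-\alpha_1}(1+e_n)$ with $c=-d_1>0$ and $e_n$ a Puiseux-type expansion of precision $o(n^{-(\alpha_m-\alpha_1)})$, the ratio factors as
$$\left[\frac{(n+1)^2}{n(n+2)}\right]^{\alpha_1}\cdot\frac{(1+e_n)(1+e_{n+2})}{(1+e_{n+1})^2},$$
each piece expanding to $1+O(n^{-2})+O(n^{-\min(\alpha_2-\alpha_1+1,2)})$. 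Multiplying through by the expansion $u_{n+1}^2=1+2d_1/n^{\alpha_1}+O(n^{-\min(\alpha_1+1,\alpha_2)})$ produces the claimed form, with the $2d_1/n^{\alpha_1}$ correction remaining dominant precisely because $\alpha_1<2$.

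The main obstacle is the careful bookkeeping of the Puiseux-type expansions through the quotient operation: one must confirm (i) that the collection of exponents appearing in the new expansion still starts at $\alpha_1$ and does not exceed $\alpha_m-\alpha_1$ in the first non-negligible regime; (ii) that $\alpha_1<2$ is genuinely needed to prevent the $1/n^2$ contribution from overriding the $1/n^{\alpha_1}$ correction; and (iii) that integer shifts from binomial expansions of $(n+j)^{-\alpha_i}$ introduce new exponents of the form $\alpha_i+k$, $k\in\mathbb{N}$, so that one has to work within the enlarged exponent set $\{\alpha_i+k:1\leqslant i\leqslant m,\ k\in\mathbb{N}\}$ and verify that the Puiseux-type format is preserved. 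Once this bookkeeping is in place, the doubling of the leading coefficient ($d_1\mapsto 2d_1\mapsto 4d_1\mapsto\cdots$) at each induction step is automatic from the identity displayed above, and the condition $\alpha_m-(r-1)\alpha_1\geqslant\alpha_1$, equivalent to $r\leqslant\floor{\alpha_m/\alpha_1}$, is exactly what allows the iteration to be repeated $r$ times.
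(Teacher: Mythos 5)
This statement is quoted by the paper from Hou and Zhang \cite{Hou} and is not proved in the manuscript, so there is no in-paper proof to compare against; judged on its own terms, your proposal is a correct reconstruction of the standard argument. Your key identity $(\mathcal{R}^2\widehat{\mathcal{L}}\omega)_n=u_{n+1}^2\,(1-u_n)(1-u_{n+2})/(1-u_{n+1})^2$ is exactly Lemma~\ref{HouZlemma}, and your iteration is the same template the paper itself deploys in the proof of Theorem~\ref{r-log-quasi}: there $\alpha_1=2$, so the factor $\bigl[(n+1)^2/(n(n+2))\bigr]^{\alpha_1}$ contributes at the \emph{same} exponent and the leading coefficient transforms as $m\mapsto 2m+2$, whereas in your setting $\alpha_1<2$ pushes that contribution to higher order and the coefficient simply doubles, $d_1\mapsto 2d_1$ — you correctly identify this as the precise role of the hypothesis $\alpha_1<2$. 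Two small points of bookkeeping: the factor $(1+e_n)(1+e_{n+2})/(1+e_{n+1})^2$ is a second difference, so its explicit terms sit at exponents $\geqslant\alpha_2-\alpha_1+2$ (not $\min(\alpha_2-\alpha_1+1,2)$ as you wrote), and one should check $\alpha_2-\alpha_1+2>\alpha_1$, which again follows from $\alpha_1<2$; and the error term $o(n^{-(\alpha_m-\alpha_1)})$ in $e_n$ gains nothing from the second difference, which is precisely why the degree of the new Puiseux-type approximation drops to $\alpha_m-\alpha_1$ and why $\lfloor(\alpha_m-\alpha_1)/\alpha_1\rfloor=r-1$ is the right induction parameter. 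Neither issue affects the validity of your argument.
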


For the sake of clarity, let us further write $p_{\mathcal{A},k}(n)=p_\mathcal{A}(n,k)$ whenever we apply an operator ($\mathcal{R}$ or $\widehat{\mathcal{L}}$) to $p_\mathcal{A}(n,k)$ in order to emphasize that we consider it as\linebreak a sequence in $n$. Unfortunately, we can not explicitly employ Theorem \ref{HouZ} to $p_\mathcal{A}(n,k)$, because for the restricted partition function we usually have that $\alpha_1=2$. More precisely, if $k\geqslant4$ and $\gcd A=1$ for all $(k-2)$-multisubsets $A$ of $\{a_1,a_2,\ldots,a_k\}$, then Theorem \ref{2.4} implies that 
$$p_\mathcal{A}(n,k)=an^{k-1}+bn^{k-2}+cn^{k-3}+c_{k-4}(n)n^{k-4}+\cdots+c_0(n),$$
where $c_{0}(n),\ldots,c_{k-4}(n)$ depend on the residue class of $n\pmod*{\lcm(a_1,a_2,\ldots,a_k)}$ and 
\begin{align*}
    a&=\frac{1}{(k-1)!\prod_{i=1}^ka_i},\\
    b&=\frac{\sigma}{2(k-2)!\prod_{i=1}^ka_i},\\
    c&=\frac{3\sigma^2-s_2}{24(k-3)!\prod_{i=1}^ka_i}
\end{align*}
with $\sigma=a_1+a_2+\cdots+a_k$ and $s_2=a_1^2+a_2^2+\cdots+a_k^2$. Now, it is straightforward to compute that
\begin{align*}
    p_\mathcal{A}(n,k)p_\mathcal{A}(n+2,k)=\beta_{2k-2}n^{2k-2}+\beta_{2k-3}n^{2k-3}+\beta_{2k-4}n^{2k-4}+\beta(n)
\end{align*}
and
\begin{align*}
    p_\mathcal{A}^2(n+1,k)=\gamma_{2k-2}n^{2k-2}+\gamma_{2k-3}n^{2k-3}+\gamma_{2k-4}n^{2k-4}+\gamma(n),
\end{align*}
where 
\begin{align*}
    \beta_{2k-2}&=\gamma_{2k-2}=a^2,\\
    \beta_{2k-3}&=\gamma_{2k-3}=2a(a(k-1)+b),\\
    \beta_{2k-4}&=\gamma_{2k-4}-a^2(k-1)=2a^2(k-1)(k-2)+2ab(2k-3)+2ac+b^2,
\end{align*}
and $\beta(n)$, together with $\gamma(n)$, is some quasi-polynomial of degree $2k-5$. Hence, the Puiseux-type approximation of 
$\left(\mathcal{R}^2p_{\mathcal{A},k}\right)(n)$ of degree $2$ might be calculated as follows:
\begin{align*}
    \left(\mathcal{R}^2p_{\mathcal{A},k}\right)(n)&=\frac{p_\mathcal{A}(n+2,k)p_\mathcal{A}(n,k)}{p_\mathcal{A}^2(n+1,k)}\\
    &=\frac{\beta_{2k-2}n^{2k-2}+\beta_{2k-3}n^{2k-3}+\beta_{2k-4}n^{2k-4}+\beta(n)}{\gamma_{2k-2}n^{2k-2}+\gamma_{2k-3}n^{2k-3}+\gamma_{2k-4}n^{2k-4}+\gamma(n)}\\
&=1+\frac{-a^2(k-1)n^{2k-4}+\beta(n)-\gamma(n)}{\gamma_{2k-2}n^{2k-2}+\gamma_{2k-3}n^{2k-3}+\gamma_{2k-4}n^{2k-4}+\gamma(n)}\\
&=1+\frac{1-k}{n^2}+o\left(\frac{1}{n^2}\right),
\end{align*}
where the last equality is a consequence of the following equality
\begin{align*}
    &\frac{-a^2(k-1)n^{2k-4}+\beta(n)-\gamma(n)}{\gamma_{2k-2}n^{2k-2}+\gamma_{2k-3}n^{2k-3}+\gamma_{2k-4}n^{2k-4}+\gamma(n)}-\frac{1-k}{n^2}\\=&\frac{n^2(\beta(n)-\gamma(n))+(k-1)(\gamma_{2k-3}n^{2k-3}+\gamma_{2k-4}n^{2k-4}+\gamma(n))}{n^2(\gamma_{2k-2}n^{2k-2}+\gamma_{2k-3}n^{2k-3}+\gamma_{2k-4}n^{2k-4}+\gamma(n))}.
\end{align*}
It should be pointed out that without any additional assumptions on the multiset $\{a_1,a_2,\ldots,a_k\}$, we can not simply present a bit more precise Puiseux-type approximation of $\left(\mathcal{R}^2p_{\mathcal{A},k}\right)(n)$ --- mainly because if we try to do so, then we obtain that the nominator over $n^3$ depends on the residue class of $n\pmod*{\lcm(a_1,a_2,\ldots,a_k)}$.

Let us now slightly modify Theorem \ref{HouZ} to get a relevant criterion in the case of the restricted partition function $p_\mathcal{A}(n,k)$. To achieve this goal we use two auxiliary lemmas. The first of them is due to Hou and Zhang \cite{Hou}.
\begin{lm}\label{HouZlemma}
Let $\omega=\left(w_i\right)_{i=0}^\infty$ be a sequence of real numbers. We have that 
\begin{align*}
\mathcal{R}^2\widehat{\mathcal{L}}\omega=\left(t_{n+1}^2\frac{(t_n-1)(t_{n+2}-1)}{(t_{n+1}-1)^2}\right)_{n=0}^\infty,
\end{align*}
where $\left(t_i\right)_{i=0}^\infty=\mathcal{R}^2\omega$.
\end{lm}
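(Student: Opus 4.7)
The plan is a direct computation. The key observation is that the definition $t_i = w_i w_{i+2}/w_{i+1}^2$ lets one rewrite the $i$-th entry of $\widehat{\mathcal{L}}\omega$ as
\[(\widehat{\mathcal{L}}\omega)_i = w_{i+1}^2 - w_i w_{i+2} = w_{i+1}^2(1 - t_i),\]
converting the awkward alternating combination into a square times a linear expression in $t_i$. First I would perform exactly this substitution at the indices $n$, $n+1$ and $n+2$.

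Next I would apply the definition of $\mathcal{R}^2$ to the sequence $\widehat{\mathcal{L}}\omega$, namely
\[\bigl(\mathcal{R}^2\widehat{\mathcal{L}}\omega\bigr)_n = \frac{(\widehat{\mathcal{L}}\omega)_n\,(\widehat{\mathcal{L}}\omega)_{n+2}}{(\widehat{\mathcal{L}}\omega)_{n+1}^2},\]
so that the factorization from the previous step produces
\[\bigl(\mathcal{R}^2\widehat{\mathcal{L}}\omega\bigr)_n = \frac{w_{n+1}^2\, w_{n+3}^2}{w_{n+2}^4}\cdot\frac{(1-t_n)(1-t_{n+2})}{(1-t_{n+1})^2}.\]
The $w$-part is exactly $\bigl(w_{n+1}w_{n+3}/w_{n+2}^2\bigr)^2 = t_{n+1}^2$, while the remaining factor coincides with $(t_n-1)(t_{n+2}-1)/(t_{n+1}-1)^2$ because the two sign flips in the numerator cancel and $(1-t_{n+1})^2 = (t_{n+1}-1)^2$. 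Multiplying these two pieces gives the claimed identity.

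No genuine obstacle is expected, since the argument is a one-paragraph manipulation. The only point requiring attention is to guarantee that the ratios appearing in $\mathcal{R}^2\widehat{\mathcal{L}}\omega$ are well defined, i.e.\ $w_i \neq 0$ and $(\widehat{\mathcal{L}}\omega)_{n+1} \neq 0$. Both conditions are automatic in the intended application to $\omega = \left(p_\mathcal{A}(n,k)\right)_{n=0}^\infty$, where the entries are eventually positive and $\widehat{\mathcal{L}}\omega$ is eventually positive by the log-concavity already established in Theorem \ref{log3}.
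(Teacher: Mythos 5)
Your computation is correct and is the natural (indeed essentially the only) way to verify this identity: writing $(\widehat{\mathcal{L}}\omega)_i=w_{i+1}^2(1-t_i)$ and then forming the ratio $(\widehat{\mathcal{L}}\omega)_n(\widehat{\mathcal{L}}\omega)_{n+2}/(\widehat{\mathcal{L}}\omega)_{n+1}^2$ immediately yields $t_{n+1}^2\,(t_n-1)(t_{n+2}-1)/(t_{n+1}-1)^2$. Note that the paper does not prove this lemma at all --- it is quoted from Hou and Zhang \cite{Hou} --- so there is no in-paper argument to compare against; your remark that the hypotheses should really guarantee the nonvanishing of $w_{i+1}$ and of $(\widehat{\mathcal{L}}\omega)_{n+1}$ (automatic in the intended application to $p_\mathcal{A}(n,k)$) is a fair observation about the looseness of the statement as given.
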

On the other hand, the second one gives us information about the Puiseux-type approximation of those functions which grow as fast as polynomials.

\begin{lm}\label{lemma2}
    Let $s\in\mathbb{N}_{\geqslant2}$ and $a_{l},a_{l-1},\ldots,a_{l-s}\in\mathbb{R}$ be arbitrary. Suppose further that there is a function $f(n)=a_ln^l+a_{l-1}n^{l-1}+\cdots+a_{l-s}n^{l-s}+o\left(n^{l-s}\right)$. Moreover, let us put $f(n)f(n+2)-f^2(n+1)=d_{2l-2,1}n^{2l-2}+\cdots+d_{2l-s,1}n^{2l-s}+o\left(n^{2l-s}\right)$ (in fact, $d_{2l,1}=d_{2l-1,1}=0$) and $f^2(n+1)=b_{2l}n^{2l}+\cdots+b_{2l-s}n^{2l-s}+o\left(n^{2l-s}\right)$. Then the equality
    \begin{align*}
        \left(\mathcal{R}^2f\right)(n)=1+\frac{d_2}{n^2}&+\cdots+\frac{d_j}{n^j}+\frac{\sum_{i=1}^{s-j}d_{2l-j-i,j}n^{2l-j-i}+o\left(n^{2l-s}\right)}{b_{2l}n^{2l}+\cdots+b_{2l-s}n^{2l-s}+o\left(n^{2l-s}\right)}
        \end{align*}
    holds for every $2\leqslant j\leqslant s$ with $d_{i}=d_{2l-i,i-1}/b_{2l}$, where 
    \begin{align*}
        d_{u,v}=d_{u,v-1}-d_{2l-v,v-1}b_{u+v}/b_{2l}
    \end{align*}
    for $2\leqslant v\leqslant s-1$ and $2l-s\leqslant u\leqslant2l-v-1$.
    In particular, the Puiseux-type approximation $y_{n}$ of $f(n)f(n+2)/f^2(n+1)$ takes the form
    \begin{align*}
        y_n=1+\frac{-l}{n^2}+\frac{d_{3}}{n^3}+\cdots+\frac{d_{s}}{n^{s}}+o\left(\frac{1}{n^{s}}\right).
    \end{align*}
\end{lm}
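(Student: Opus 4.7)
The plan is to prove the lemma by induction on $j\in\{2,3,\ldots,s\}$, peeling off one $1/n^j$ term at a time via a formal long-division of the displayed fractions. Introduce the shorthand
$$N_j(n):=\sum_{i=1}^{s-j}d_{2l-j-i,\,j}\,n^{2l-j-i}+o\!\left(n^{2l-s}\right),\qquad D(n):=b_{2l}n^{2l}+\cdots+b_{2l-s}n^{2l-s}+o\!\left(n^{2l-s}\right),$$
so that $N_1(n)=f(n)f(n+2)-f^2(n+1)$ (after using the vanishing $d_{2l,1}=d_{2l-1,1}=0$) and the lemma's claim becomes the identity $(\mathcal R^2 f)(n)=1+\sum_{i=2}^{j}d_i/n^i+N_j(n)/D(n)$.

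For the base case $j=2$, start from $(\mathcal R^2 f)(n)-1=N_1(n)/D(n)$ and subtract $d_2/n^2$ with $d_2=d_{2l-2,1}/b_{2l}$. Combining over $n^2 D(n)$ gives a numerator $n^2N_1(n)-d_2\,D(n)$ whose coefficients at $n^{2l}$ and $n^{2l-1}$ vanish (the former by the definition of $d_2$, the latter because $d_{2l-1,1}=0$, noting $-d_2 b_{2l-1}$ appears alone), and whose coefficient at $n^{v}$ for $2l-s+2\leqslant v\leqslant 2l-2$ equals $d_{v-2,1}-d_2\,b_v=d_{v-2,2}$ by the stated recursion with $v_{\text{rec}}=2$. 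Dividing numerator and denominator by $n^2$ recasts the remainder as $N_2(n)/D(n)$, which is exactly the asserted form.

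The inductive step is identical in structure. Assuming the formula at level $j$, subtract $d_{j+1}/n^{j+1}$ with $d_{j+1}:=d_{2l-j-1,j}/b_{2l}$ from $N_j(n)/D(n)$, combine over $n^{j+1}D(n)$, and read off the coefficients of $n^{j+1}N_j(n)-d_{j+1}D(n)$: the leading coefficient vanishes by the very choice of $d_{j+1}$, and the remaining coefficients agree term-by-term with the recursion $d_{u,j+1}=d_{u,j}-d_{2l-j-1,j}b_{u+j+1}/b_{2l}$. After cancelling one factor of $n$, one obtains exactly $N_{j+1}(n)/D(n)$, closing the induction.

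For the ``in particular'' clause that $d_2=-l$, the only genuine computation in the whole proof is to isolate the coefficient of $n^{2l-2}$ in $f(n)f(n+2)-f^2(n+1)$. Expanding $f(n)$, $f(n+1)$, $f(n+2)$ through the $n^{l-2}$ term, one finds that all contributions involving $a_{l-1}$ or $a_{l-2}$ cancel pairwise, leaving $-l\,a_l^2$; since $b_{2l}=a_l^2$, this yields $d_2=-l$, as required. The main obstacle is not conceptual but bookkeeping: maintaining clean index conventions for $d_{u,v}$ so that the telescoping across the induction actually lines up with the stated recursion. With the abbreviation $N_j(n)$ above, the induction becomes essentially mechanical.
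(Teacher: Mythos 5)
Your proof is correct in substance and follows essentially the same route as the paper's: both write $\left(\mathcal{R}^2f\right)(n)$ as $1$ plus a ratio of generalized polynomials with equal top coefficients, then induct on $j$, peeling off $d_j/n^j$ by a long-division step whose surviving coefficients reproduce the recursion $d_{u,v}=d_{u,v-1}-d_{2l-v,v-1}b_{u+v}/b_{2l}$, and both obtain $d_2=-l$ from the coefficient of $n^{2l-2}$ in $f(n)f(n+2)-f^2(n+1)$ being $-la_l^2$. One bookkeeping slip in your base case: the coefficient of $n^{2l-1}$ in $n^2N_1(n)-d_2D(n)$ is $d_{2l-3,1}-d_2b_{2l-1}=d_{2l-3,2}$, which does \emph{not} vanish in general (your parenthetical justification is incoherent, and if it did vanish the remainder would lack the $d_3/n^3$ term the lemma asserts); the correct range of surviving coefficients is $2l-s+2\leqslant v\leqslant 2l-1$, and likewise in the inductive step one cancels $n^{j+1}$, not a single factor of $n$. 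Neither slip affects the validity of the argument.
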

\begin{proof}
    Let $s\geqslant2$ and $f(n)$ be fixed. Employing the same method as in the discussion before Lemma \ref{HouZlemma}, we get that
    \begin{align}\label{eq1}
        \left(\mathcal{R}^2f\right)(n)=\frac{c_{2l}n^{2l}+c_{2l-1}n^{2l-1}+\cdots+c_{2l-s}n^{2l-s}+o\left(n^{2l-s}\right)}{b_{2l}n^{2l}+b_{2l-1}n^{2l-1}+\cdots+b_{2l-s}n^{2l-s}+o\left(n^{2l-s}\right)},
    \end{align}
    where 
    \begin{align*}
        c_{2l}=b_{2l}&=a_l^2\\
        c_{2l-1}=b_{2l-1}&=2a_l(la_l+a_{l-1})\\
        c_{2l-2}=b_{2l-2}-la_l^2&=2l(l-1)a_l^2+2(2l-1)a_la_{l-1}+2a_la_{l-2}+a_{l-1}^2,
    \end{align*}
    and all the remaining terms $b_i$ and $c_j$ can be explicitly determined. Hence, it is clear that
    \begin{align*}
        \left(\mathcal{R}^2f\right)(n)&=1+\frac{d_{2l-2,1}n^{2l-2}+\cdots+d_{2l-s,1}n^{2l-s}+o\left(n^{2l-s}\right)}{b_{2l}n^{2l}+b_{2l-1}n^{2l-1}+\cdots+b_{2l-2r}n^{2l-2r}+o\left(n^{2l-s}\right)}\\
        &=1+\frac{d_2}{n^2}+\frac{d_{2l-3,2}n^{2l-3}+\cdots+d_{2l-s,2}n^{2l-s}+o\left(n^{2l-s}\right)}{b_{2l}n^{2l}+b_{2l-1}n^{2l-1}+\cdots+b_{2l-2r}n^{2l-2r}+o\left(n^{2l-s}\right)}\\
        &=1+\frac{-l}{n^2}+o\left(\frac{1}{n^2}\right),
    \end{align*}
    as required. In particular, we proved the lemma for $j=2$. Now, we assume that for a fixed $s$ the claim holds for every $2\leqslant m \leqslant j-1$, and it suffices to verify its correctness for $m=j$. The induction hypothesis maintains that
    \begin{align*}
         \left(\mathcal{R}^2f\right)(n)=1+\frac{d_2}{n^2}&+\cdots+\frac{d_{j-1}}{n^{j-1}}+\frac{\sum_{i=1}^{s-j+1}d_{2l-j+1-i,j-1}n^{2l-j+1-i}+o\left(n^{2l-s}\right)}{b_{2l}n^{2l}+\cdots+b_{2l-s}n^{2l-s}+o\left(n^{2l-s}\right)}.
    \end{align*}
    Since $j\leqslant s$, we may also write
    \begin{align*}
        \left(\mathcal{R}^2f\right)(n)=1&+\frac{d_2}{n^2}+\cdots+\frac{d_{j-1}}{n^{j-1}}+\frac{d_{2l-j,j-1}/b_{2l}}{n^j}\\
        &+\frac{\sum_{i=j+1}^{s}\left(d_{2l-i,j-1}-d_{2l-j,j-1}b_{2l-i+j}/b_{2l}\right)n^{2l-i}+o\left(n^{2l-s}\right)}{b_{2l}n^{2l}+\cdots+b_{2l-s}n^{2l-s}+o\left(n^{2l-s}\right)}\\
        =1&+\frac{d_2}{n^2}+\cdots+\frac{d_{j-1}}{n^{j-1}}+\frac{d_{j}}{n^j}+\frac{\sum_{i=1}^{s-j}d_{2l-j-i,j}n^{2l-j-i}+o\left(n^{2l-s}\right)}{b_{2l}n^{2l}+\cdots+b_{2l-s}n^{2l-s}+o\left(n^{2l-s}\right)}.
    \end{align*}
    This completes the proof by the law of induction.
\end{proof}
The upcoming result is a key to resolve the $r$-log-concavity problem of both some functions `similar' to polynomials in general, and the restricted partition function in particular.

\begin{thm}\label{r-log-quasi}
     Let $l$ and $r$ be arbitrary positive integers such that $l\geqslant2r$. Suppose further that  $f(n)=a_ln^l+a_{l-1}n^{l-1}+\cdots+a_{l-2r}n^{l-2r}+o\left(n^{l-2r}\right)$. Then
     \begin{align*}
    \left(\mathcal{R}^{2}\widehat{\mathcal{L}}^{j-1}f\right)(n)=1+\frac{2^{j}-2^{j-1}l-2}{n^2}+q_j(n)+o\left(\frac{1}{n^{2(r-j+1)}}\right)
\end{align*}
for any $1\leqslant j\leqslant r$, where $q_j(n)$ is an expression of the form
$$q_j(n)=\frac{z_{3,j}}{n^3}+\cdots+\frac{z_{2(r-j+1),j}}{n^{2(r-j+1)}}$$
for some real numbers $z_{3,j},z_{4,j},\ldots,z_{2(r-j+1),j}$.
\end{thm}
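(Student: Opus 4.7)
The plan is to derive the result directly from Lemma \ref{lemma2} applied to the sequence $\widehat{\mathcal{L}}^{j-1} f$. The main preliminary task is to verify that $\widehat{\mathcal{L}}^{j-1} f$ is of the polynomial-like form required by Lemma \ref{lemma2}, with a specific degree and order of precision that track the iteration.

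Specifically, I claim by induction on $j \in \{1, \ldots, r\}$ that
\[
\widehat{\mathcal{L}}^{j-1} f(n) = b_{L_{j-1}} n^{L_{j-1}} + b_{L_{j-1} - 1} n^{L_{j-1} - 1} + \cdots + b_{L_{j-1} - S_{j-1}} n^{L_{j-1} - S_{j-1}} + o\!\left(n^{L_{j-1} - S_{j-1}}\right),
\]
where $L_{j-1} = 2^{j-1}(l - 2) + 2$ and $S_{j-1} = 2(r - j + 1)$, with nonzero leading coefficient $b_{L_{j-1}}$. The base case $j = 1$ is exactly the hypothesis on $f$, with $L_0 = l$ and $S_0 = 2r$. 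For the inductive step, a direct expansion of $\widehat{\mathcal{L}} g(n) = g(n+1)^2 - g(n) g(n+2)$ for $g(n) = a n^m + (\text{lower}) + o(n^{m-s})$ yields a function of the form $a^2 m \cdot n^{2m-2} + (\text{lower}) + o(n^{2m-s})$: the coefficients of $n^{2m}$ and $n^{2m-1}$ cancel by symmetry, the coefficient of $n^{2m-2}$ simplifies to $a^2 m$, and the error gets amplified by the leading $a n^m$ from $o(n^{m-s})$ to $o(n^{2m-s})$. Hence $L_j = 2 L_{j-1} - 2$ and $S_j = S_{j-1} - 2$, giving the closed forms above; the assumption $l \geqslant 2r$ is precisely what is needed to guarantee $S_{j-1} \geqslant 2$ for each $j \in \{1, \ldots, r\}$, as Lemma \ref{lemma2} demands.

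With the claim in hand, Lemma \ref{lemma2} applied to $\widehat{\mathcal{L}}^{j-1} f$ (with $l' = L_{j-1}$ and $s' = S_{j-1}$) immediately yields
\[
\left(\mathcal{R}^2 \widehat{\mathcal{L}}^{j-1} f\right)(n) = 1 + \frac{-L_{j-1}}{n^2} + \frac{z_{3, j}}{n^3} + \cdots + \frac{z_{S_{j-1}, j}}{n^{S_{j-1}}} + o\!\left(\frac{1}{n^{S_{j-1}}}\right).
\]
Rewriting $-L_{j-1} = -2^{j-1}(l - 2) - 2 = 2^j - 2^{j-1} l - 2$ gives the desired coefficient of $1/n^2$, and setting $q_j(n) = z_{3, j}/n^3 + \cdots + z_{2(r-j+1), j}/n^{2(r-j+1)}$ matches the rest of the statement.

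The main obstacle is the inductive claim about $\widehat{\mathcal{L}}^{j-1} f$: showing that a single application of $\widehat{\mathcal{L}}$ sends $(\textrm{degree}, \textrm{precision}) = (m, o(n^{m-s}))$ to $(2m - 2, o(n^{2m - s}))$ with nonzero leading coefficient $a^2 m$. The key identity is the cancellation in $g(n+1)^2 - g(n) g(n+2)$: the top two coefficients vanish, while $\binom{2m}{2} - 4\binom{m}{2} = m$ supplies the remaining nonzero contribution at order $n^{2m-2}$, and the analogous (but longer) computations for the lower-order coefficients justify the full expansion up to precision $o(n^{2m-s})$. An alternative route would be to induct directly on $j$ using Lemma \ref{HouZlemma}: writing $t_n = (\mathcal{R}^2 \widehat{\mathcal{L}}^{j-1} f)(n)$, the identity $\mathcal{R}^2 \widehat{\mathcal{L}}^j f(n) = t_{n+1}^2 (t_n - 1)(t_{n+2} - 1)/(t_{n+1} - 1)^2$ together with a short Taylor expansion in powers of $1/n$ leads to the recurrence $c_{j+1} = 2 c_j + 2$ with base value $c_1 = -l$, whose unique solution is the claimed $c_j = 2^j - 2^{j-1} l - 2$.
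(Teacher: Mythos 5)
Your argument is correct, but it proves the theorem by a genuinely different route than the paper does. The paper's proof inducts on $j$ at the level of the ratios: it takes the Puiseux-type approximation of $t_n=\left(\mathcal{R}^2\widehat{\mathcal{L}}^{j-2}f\right)(n)$ as the induction hypothesis, feeds it into Lemma \ref{HouZlemma} via the identity $\mathcal{R}^2\widehat{\mathcal{L}}\omega=t_{n+1}^2(t_n-1)(t_{n+2}-1)/(t_{n+1}-1)^2$, and extracts the new $1/n^2$ coefficient $2m+2$ through a logarithmic/Taylor expansion of $w_nw_{n+2}/w_{n+1}^2$ with $w_n=\frac{n^2}{m}(t_n-1)$. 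You instead induct at the level of the sequences themselves, showing that $\widehat{\mathcal{L}}^{j-1}f$ remains quasi-polynomial-like of degree $L_{j-1}=2^{j-1}(l-2)+2$ with precision window $S_{j-1}=2(r-j+1)$, and then invoke Lemma \ref{lemma2} once per level; this is essentially the bookkeeping of the paper's \emph{second} proof of Theorem \ref{r-log-2} (where the same $m_{s+1}=2^sl+2-2^{s+1}$ and leading coefficient $a_l^{2^s}\prod_jm_j^{2^{s-j}}$ appear), upgraded to yield the full Puiseux expansion rather than just the sign of the leading term. Your route is more elementary and avoids the logarithm expansion entirely, but it depends on $\widehat{\mathcal{L}}^{j-1}f$ itself staying polynomial-like; the paper's route only needs control of $\mathcal{R}^2\omega$, which is why it also applies to sequences such as $p(n)$ and $\overline{p}(n)$ that are not of this form. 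Two small remarks: your parenthetical that $l\geqslant2r$ ``is precisely what is needed to guarantee $S_{j-1}\geqslant2$'' is off --- $S_{j-1}=2(r-j+1)\geqslant2$ follows from $j\leqslant r$ alone; what $l\geqslant2r$ actually buys here is $l\geqslant2$, hence $L_i=2^i(l-2)+2>0$ for all $i$, so the leading coefficients $a_l^{2^{j-1}}\prod_iL_i^{(\cdot)}$ never vanish (its real role is downstream, in making $2^j-2^{j-1}l-2$ negative in Theorem \ref{r-log-2}). Also, the inductive step asserting that $\widehat{\mathcal{L}}$ sends precision $o(n^{m-s})$ to $o(n^{2m-s})$ deserves the one-line justification that the cross terms are $O(n^m)\cdot o(n^{m-s})=o(n^{2m-s})$ and that only coefficients $a_i,a_j$ with $i,j\geqslant m-s$ can influence the coefficients of degree $\geqslant 2m-s$; with that said, the argument is complete.
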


\begin{proof}
    Let us fix parameters $l$, $r$ and a function $f(n)$. Applying Lemma \ref{lemma2} with $s=2r$ maintains that
\begin{align*}
        \left(\mathcal{R}^2f\right)(n)=\frac{f(n)f(n+2)}{f^2(n+1)}=1+\frac{-l}{n^2}+\frac{d_{3}}{n^3}+\cdots+\frac{d_{2r}}{n^{2r}}+o\left(\frac{1}{n^{2r}}\right)
    \end{align*}
with some real numbers $d_3,d_4,\ldots,d_{2r}$. This completes the proof in the case of $j=1$. Now, let us assume that the statement is valid for all $1\leqslant i \leqslant j-1$, and let us verify its correctness for $i=j$. It follows from the induction hypothesis that
\begin{align*}
    \left(\mathcal{R}^{2}\widehat{\mathcal{L}}^{j-2}f\right)(n)=1+\frac{2^{j-1}-2^{j-2}l-2}{n^2}+q_{j-1}(n)+o\left(\frac{1}{n^{2(r-j+2)}}\right),
\end{align*}
where $q_{j-1}(n)$ is an expression of the form
\begin{align*}
    q_j(n)=\frac{z_{3,j-1}}{n^3}+\cdots+\frac{z_{2(r-j+2),j-1}}{n^{2(r-j+2)}}
\end{align*}
for some real numbers $z_{3,j-1},z_{4,j-1},\ldots,z_{2(r-j+2),j-1}$. To simplify the notation put 
\begin{align*}
    t_n=1+\frac{m}{n^2}+q_{j-1}(n)+o\left(\frac{1}{n^{2(r-j+2)}}\right)
\end{align*}
with $m=2^{j-1}-2^{j-2}l-2$. Then, we have
\begin{align*}
    \left(\mathcal{R}^{2}\widehat{\mathcal{L}}^{j-2}f\right)(n)=t_n.
\end{align*}
 Our goal is to determine Puiseux-type approximation of $\left(\mathcal{R}^2\widehat{\mathcal{L}}^{j-1}f\right)(n)$ (of degree $2(r-j+1)$). From the generalized binomial theorem, we get that 
    \begin{align*}
        \frac{1}{(n+u)^v}&=\sum_{i=0}^\infty\binom{-v}{i}n^{-v-i}u^i=\sum_{i=0}^\infty(-1)^i\binom{v+i-1}{i}\frac{u^i}{n^{v+i}}\\
        &=\sum_{i=0}^{2(r-j+2)-v}(-1)^i\binom{v+i-1}{i}\frac{u^i}{n^{v+i}}+o\left(\frac{1}{n^{2(r-j+2)}}\right)\numberthis \label{eqn1}
    \end{align*}
    holds for all positive integers $u$ and $v$ such that $v\leqslant2(r-j+2)$. Therefore, it is straightforward to see that
    \begin{align*}
        t_{n+1}^2&=\left(1+\frac{m}{(n+1)^2}+q_{j-1}(n+1)+o\left(\frac{1}{(n+1)^{2(r-j+2)}}\right)\right)^2\\
        &=1+\frac{2m}{n^2}+\widetilde{q}_{j-1}(n)+o\left(\frac{1}{n^{2(r-j+2)}}\right)\numberthis \label{eqn2},
    \end{align*}
    where 
    \begin{align*}
        \widetilde{q}_{j-1}(n)=\frac{\widetilde{z}_{3,j-1}}{n^3}+\cdots+\frac{\widetilde{z}_{2(r-j+2),j-1}}{n^{2(r-j+2)}}
\end{align*}
for some real numbers $\widetilde{z}_{3,j-1},\widetilde{z}_{4,j-1},\ldots,\widetilde{z}_{2(r-j+2),j-1}$. For the sake of brevity, we put
    \begin{align}\label{eqn3}
        w_n=\frac{n^2}{m}(t_n-1).
    \end{align}
    Thus, it is clear that 
    \begin{align*}
        w_n&=1+\frac{n^2}{m}\cdot q_{j-1}(n)+o\left(\frac{1}{n^{2(r-j+1)}}\right)\\
        &=1+\frac{z_{3,j-1}/m}{n}+\cdots+\frac{z_{2(r-j+2),j-1}/m}{n^{2(r-j+1)}}+o\left(\frac{1}{n^{2(r-j+1)}}\right)\\
        &=1+\frac{\widehat{z}_{1}}{n}+\cdots+\frac{\widehat{z}_{2(r-j+1)}}{n^{2(r-j+1)}}+o\left(\frac{1}{n^{2(r-j+1)}}\right),
    \end{align*}
    where $\widehat{z}_i=z_{i+2,j-1}/m$ for $1\leqslant i \leqslant 2(r-j+1)$. We apply Taylor expansion of the logarithm function in order to estimate $w_nw_{n+2}/w_{n+1}^2$. It follows that
    \begin{align*}
        \log{w_n}&=\sum_{i=1}^\infty\frac{(-1)^{i+1}}{i}(w_n-1)^i\\
        &=\frac{e_1}{n}+\frac{e_2}{n^2}+\cdots+\frac{e_{2(r-j+1)}}{n^{2(r-j+1)}}+o\left(\frac{1}{n^{2(r-j+1)}}\right),
    \end{align*}
    with $e_1=\widehat{z}_1$, $e_2=\widehat{z}_2-\widehat{z}_1^2/2$ and all the remaining values $e_3,e_4,\ldots,e_{2(r-j+1)}$ are uniquely determined in terms of $\widehat{z}_1,\widehat{z}_2,\ldots,\widehat{z}_{2(r-j+1)}$. After some tiresome but elementary calculations, we obtain that
    \begin{align*}
        \log{\frac{w_nw_{n+2}}{w_{n+1}^2}}&=\log{w_n}+\log{w_{n+2}}-2\log{w_{n+1}}\\
&=\frac{e_1}{n}+\cdots+\frac{e_{2(r-j+1)}}{n^{2(r-j+1)}}+\frac{e_1}{n+2}+\cdots+\frac{e_{2(r-j+1)}}{(n+2)^{2(r-j+1)}}\\
&\phantom{=}-2\left(\frac{e_1}{n+1}+\cdots+\frac{e_{2(r-j+1)}}{(n+1)^{2(r-j+1)}}\right)+o\left(\frac{1}{n^{2(r-j+1)}}\right)\\
&=\frac{\widehat{e}_3}{n^3}+\cdots+\frac{\widehat{e}_{2(r-j+1)}}{n^{2(r-j+1)}}+o\left(\frac{1}{n^{2(r-j+1)}}\right),
    \end{align*}
    where $\widehat{e}_3=2e_1$, $\widehat{e}_4=6(e_2-e_1)$, and each of the numbers $\widehat{e}_5,\widehat{e}_6,\ldots,\widehat{e}_{2(r-j+1)}$ is uniquely determined in terms of $e_1,e_2,\ldots,e_{2(r-j+1)}$. In particular, it means that 
\begin{align}\label{eqn4}
        \frac{w_nw_{n+2}}{w_{n+1}^2}=1+\frac{\check{e}_3}{n^3}+\frac{\check{e}_4}{n^4}+\cdots+\frac{\check{e}_{2(r-j+1)}}{n^{2(r-j+1)}}+o\left(\frac{1}{n^{2(r-j+1)}}\right),
    \end{align}
where all the numbers $\check{e}_3,\check{e}_4,\ldots,\check{e}_{2(r-j+1)}$ can be explicitly presented in terms of $\widehat{e}_3,\widehat{e}_4,\ldots,\widehat{e}_{2(r-j+1)}$, for example, we have $\check{e}_3=\widehat{e}_3$ and $\check{e}_4=\widehat{e}_4$. Hence, by (\ref{eqn1}), (\ref{eqn3}) and (\ref{eqn4}) we have that
    \begin{align*}
        \frac{(t_n-1)(t_{n+2}-1)}{(t_{n+1}-1)^2}&=\frac{(n+1)^4}{n^2(n+2)^2}\cdot\frac{w_nw_{n+2}}{w_{n+1}^2}\\
        &=\frac{(n+1)^4}{n^2}\cdot\left(\sum_{i=0}^{2(r-j+1)}(-1)^i\binom{i+1}{i}\frac{2^i}{n^{i+2}}+o\left(\frac{1}{n^{2(r-j+2)}}\right)\right)\\
        &\phantom{=\frac{(n+1)^4}{n^2}}\times\left(1+\frac{\check{e}_3}{n^3}+\cdots+\frac{\check{e}_{2(r-j+1)}}{n^{2(r-j+1)}}+o\left(\frac{1}{n^{2(r-j+1)}}\right)\right)\\
        &=1+\frac{2}{n^2}+\frac{\widetilde{e}_3}{n^3}+\cdots+\frac{\widetilde{e}_{2(r-j+1)}}{n^{2(r-j+1)}}+o\left(\frac{1}{n^{2(r-j+1)}}\right),
    \end{align*}
    where all the values $\widetilde{e}_3,\widetilde{e}_4,\ldots,\widetilde{e}_{2(r-j+1)}$ can be systematically derived, e.g. $\widetilde{e}_3=\check{e}_3-4$. Next, Lemma \ref{HouZlemma} and the equality (\ref{eqn2}) assert that
    \begin{align*}
        \left(\mathcal{R}^2\left(\widehat{\mathcal{L}}^{j-1}p_{\mathcal{A},k}\right)\right)(n)&=\left(1+\frac{2m}{n^2}+\widetilde{q}_{j-1}(n)+o\left(\frac{1}{n^{2(r-j+2)}}\right)\right)\\
        &\times\left(1+\frac{2}{n^2}+\frac{\widetilde{e}_3}{n^3}+\cdots+\frac{\widetilde{e}_{2(r-j+1)}}{n^{2(r-j+1)}}+o\left(\frac{1}{n^{2(r-j+1)}}\right)\right)\\
        &=1+\frac{2m+2}{n^2}+q_{j}(n)+o\left(\frac{1}{n^{2(r-j+1)}}\right)\\
        &=1+\frac{2^j-2^{j-1}l-2}{n^2}+q_{j}(n)+o\left(\frac{1}{n^{2(r-j+1)}}\right),
    \end{align*}
    where 
    $$q_j(n)=\frac{z_{3,j}}{n^3}+\cdots+\frac{z_{2(r-j+1),j}}{n^{2(r-j+1)}}$$
for some real numbers $z_{3,j},z_{4,j},\ldots,z_{2(r-j+1),j}$. This completes the inductive step and thereby ends the proof.
\end{proof}

We are ready to present the main result of the paper. Surprisingly, we prove that in two different ways.

\begin{thm}\label{r-log-2}
    Let $l$ and $r$ be arbitrary positive integers such that $l\geqslant2r$. If $f(n)=a_ln^l+a_{l-1}n^{l-1}+\cdots+a_{l-2r}n^{l-2r}+o\left(n^{l-2r}\right)$, then the sequence $\left(f(n)\right)_{n=0}^\infty$ is asymptotically $r$-log-concave.
\end{thm}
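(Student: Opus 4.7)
The plan is to deduce Theorem \ref{r-log-2} directly from Theorem \ref{r-log-quasi} by checking that the leading order correction in the Puiseux-type expansion of $(\mathcal{R}^2 \widehat{\mathcal{L}}^{j-1} f)(n)$ has the right sign for every $j \in \{1, 2, \ldots, r\}$. The key equivalence to keep in mind is that, whenever $\widehat{\mathcal{L}}^{j-1} f$ is eventually of constant (nonzero) sign, the positivity of $(\widehat{\mathcal{L}}^{j} f)(n)$ is the same as the inequality
\[
(\mathcal{R}^2 \widehat{\mathcal{L}}^{j-1} f)(n) = \frac{(\widehat{\mathcal{L}}^{j-1} f)(n)(\widehat{\mathcal{L}}^{j-1} f)(n+2)}{(\widehat{\mathcal{L}}^{j-1} f)(n+1)^2} < 1.
\]
Thus showing asymptotic $r$-log-concavity reduces to verifying this one inequality for each $j$ in the stated range, for all sufficiently large $n$.

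By Theorem \ref{r-log-quasi} applied to our $f$, for each such $j$,
\[
(\mathcal{R}^2 \widehat{\mathcal{L}}^{j-1} f)(n) = 1 + \frac{2^j - 2^{j-1} l - 2}{n^2} + q_j(n) + o\!\left(\frac{1}{n^{2(r-j+1)}}\right),
\]
with $q_j(n)$ a finite sum of terms in $n^{-3}, n^{-4}, \ldots, n^{-2(r-j+1)}$. Rewriting the numerator of the $1/n^2$ coefficient as $2^{j-1}(2-l) - 2$ and invoking the hypothesis $l \geq 2r \geq 2$, this coefficient is at most $-2$, and in particular strictly negative, for every $j \geq 1$. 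Consequently the $1/n^2$ term dominates $q_j(n)$ and the error term asymptotically, so there is an $N_j$ beyond which $(\mathcal{R}^2 \widehat{\mathcal{L}}^{j-1} f)(n) < 1$.

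Taking $N = \max_{1 \leq j \leq r} N_j$ yields $(\widehat{\mathcal{L}}^{j} f)(n) > 0$ for all $n \geq N$ and all $j \in \{1,\ldots,r\}$, which is precisely the definition of asymptotic $r$-log-concavity. The induction on $j$ is essentially built into Theorem \ref{r-log-quasi}, and it also guarantees, starting from the fact that $f(n) = a_l n^l(1 + O(1/n))$ is eventually of constant sign, that each iterate $\widehat{\mathcal{L}}^{j-1} f$ is eventually positive, so that the ratio $\mathcal{R}^2 \widehat{\mathcal{L}}^{j-1} f$ is well defined. There is no real obstacle here, since the sign analysis of the leading correction is straightforward once Theorem \ref{r-log-quasi} is in hand; the genuinely delicate step has been absorbed into that earlier result.

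For the second proof alluded to by the phrase \emph{we prove that in two different ways}, I would instead isolate a variant of the Hou--Zhang criterion (Theorem \ref{HouZ}) tailored to the critical exponent $\alpha_1 = 2$: one formulates a lemma asserting that if $(\mathcal{R}^2 \omega)_n$ admits a Puiseux-type approximation $1 + d_1/n^2 + d_2/n^3 + \cdots + d_{2r-1}/n^{2r}$ with $d_1 \leq -2$, then $\omega$ is asymptotically $r$-log-concave. The calculation inside the proof of Theorem \ref{r-log-quasi} (built around Lemma \ref{HouZlemma} and Lemma \ref{lemma2}) is exactly what verifies the hypotheses of such a lemma for $f$, so applying the criterion would give the conclusion without a second pass through the expansion.
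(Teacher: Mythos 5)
Your argument is correct and coincides with the paper's own first proof of Theorem \ref{r-log-2}: one applies Theorem \ref{r-log-quasi} and checks that the coefficient $2^{j}-2^{j-1}l-2$ of $1/n^{2}$ is negative for all $1\leqslant j\leqslant r$ when $l\geqslant 2r$ (the paper bounds it by $2^{j}-2^{j}r-2<0$, you by $2^{j-1}(2-l)-2\leqslant -2$; both are fine), and your care about the positivity of the iterates $\widehat{\mathcal{L}}^{j-1}f$ only makes explicit what the paper leaves implicit. Your sketched alternative route differs from the paper's actual second proof, which instead computes the leading term of $\widehat{\mathcal{L}}^{s}f$ directly by induction, but this does not affect the correctness of your main argument.
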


\begin{proof}
    For fixed parameters $l$ and $r$ and a function $f(n)$, Theorem \ref{r-log-quasi} points out that
\begin{align*}
    \left(\mathcal{R}^{2}\widehat{\mathcal{L}}^{j-1}f\right)(n)=1+\frac{2^{j}-2^{j-1}l-2}{n^2}+q_j(n)+o\left(\frac{1}{n^{2(r-j+1)}}\right)
\end{align*}
for any $1\leqslant j\leqslant r$, where $q_j(n)$ is an expression of the form
$$q_j(n)=\frac{z_{3,j}}{n^3}+\cdots+\frac{z_{2(r-j+1),j}}{n^{2(r-j+1)}}$$
for some real numbers $z_{3,j},z_{4,j},\ldots,z_{2(r-j+1),j}$.
Since we have that $l\geqslant2r$, one can easily deduce that the inequalities

\begin{align*}
    2^{j}-2^{j-1}l-2\leqslant2^{j}-2^{j}r-2<0
\end{align*}
    are true for every $1\leqslant j\leqslant r$. But in particular it means that the sequence $\left(f(n)\right)_{n=0}^\infty$ is asymptotically $j$-log-concave for any such a $j$, as required.
\end{proof}

Let us also present an alternative, more direct, approach to the problem.

\begin{proof*}{Second proof of Theorem~\ref{r-log-2}}
    Under the assumptions from the statement, we prove by induction that
\begin{align}\label{eqn5}\left(\widehat{\mathcal{L}}^sf\right)(n)&=a_l^{2^s}\left(\prod_{j=1}^sm_j^{2^{s-j}}\right)n^{m_{s+1}}+q_s(n)+o\left(n^{m_{s+1}-2(r-s)}\right),
\end{align}
holds for each $1\leqslant s\leqslant r$, where $m_i=2^{i-1}l+2-2^i$ and
\begin{align*}
q_s(n)=\delta_{m_{s+1}-1}n^{m_{s+1}-1}+\cdots+\delta_{m_{s+1}-2(r-s)}n^{m_{s+1}-2(r-s)}
\end{align*}
with some real numbers $\delta_i$. For $s=1$, we have that
\begin{align*} \left(\widehat{\mathcal{L}}f\right)(n)&=f^2(n+1)-f(n)f(n+2)\\
&=a_l^2ln^{2l-2}+d_{2l-3}n^{2l-3}+\cdots+d_{2l-2r}n^{2l-2r}+o\left(n^{2l-2r}\right),
\end{align*}
where $d_{2l-2},d_{2l-3},\ldots,d_{2l-2r}$ are some real numbers, as required. Therefore, let us assume that the claim is true for every $1\leqslant i \leqslant s-1<r$, and verify its correctness for $i=s$. It follows from the induction hypothesis that

\begin{align*}
\left(\widehat{\mathcal{L}}^{s-1}f\right)(n)&=a_l^{2^{(s-1)}}\left(\prod_{j=1}^{s-1}m_j^{2^{s-1-j}}\right)n^{m_{s}}+q_{s-1}(n)+o\left(n^{m_{s}-2(r-s+1)}\right),
\end{align*}
where $q_{s-1}$ is of the required form. Now, we can just write
\begin{align*}
\left(\widehat{\mathcal{L}}^{s}f\right)(n)&=\left(\widehat{\mathcal{L}}^{s-1}f\right)^2(n+1)-\left(\widehat{\mathcal{L}}^{s-1}f\right)(n)\left(\widehat{\mathcal{L}}^{s-1}f\right)(n+2)\\
&=\left(a_l^{2^{(s-1)}}\left(\prod_{j=1}^{s-1}m_j^{2^{s-1-j}}\right)\right)^2m_sn^{2m_{s}-2}+q_s(n)+o\left(n^{2m_{s}-2(r-s)-2}\right)\\
&=a^{2^s}\left(\prod_{j=1}^{s}m_j^{2^{s-j}}\right)n^{m_{s+1}}+q_s(n)+o\left(n^{m_{s+1}-2(r-s)}\right),
\end{align*}
where
\begin{align*}
q_s(n)=\delta_{m_{s+1}-1}n^{m_{s+1}-1}+\cdots+\delta_{m_{s+1}-2(r-s)}n^{m_{s+1}-2(r-s)}
\end{align*}
with some real numbers $\delta_i$. This ends the proof of (\ref{eqn5}) by the law of induction. 

Now, it is enough to see that the leading coefficient of (\ref{eqn5}) is positive for any $1\leqslant s\leqslant r$. Thus the sequence $\left(f(n)\right)_{n=0}^\infty$ is asymptotically $r$-log-concave, as required.
\end{proof*}

It is worth noting that despite the fact that the former proof requires more sophisticated preparation than the latter one, the method which we use there might be also effectively applied for more complicated functions like the partition function \cite{Hou2} or the overpartition function \cite{Mu}. 

In particular, Theorem \ref{r-log-2} delivers us the following efficient criterion for the $r$-log-concavity of the restricted partition function.

\begin{thm}\label{rlog}
    Let $\mathcal{A}=\left(a_i\right)_{i=1}^\infty$, $r\in\mathbb{N}_+$ and $k>2r$ be fixed. Suppose further that $\gcd A=1$ for all $(k-2r)$-multisubsets $A$ of $\{a_1,a_2,\ldots,a_k\}$. Then the sequence $\left(p_\mathcal{A}(n,k)\right)_{n=0}^\infty$ is asymptotically $r$-log-concave.
\end{thm}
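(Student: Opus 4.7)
The plan is to invoke Theorem~\ref{r-log-2} with $f(n) = p_\mathcal{A}(n,k)$ and $l = k-1$. This reduces the problem to verifying that the restricted partition function admits an asymptotic expansion of the form
$$p_\mathcal{A}(n,k) = c_{k-1}n^{k-1} + c_{k-2}n^{k-2} + \cdots + c_{k-1-2r}n^{k-1-2r} + o\!\left(n^{k-1-2r}\right),$$
with \emph{constant} (i.e.\ $n$-independent) coefficients in the leading $2r+1$ positions.

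The verification is a direct application of Almkvist's Theorem~\ref{2.4} with the choice $j := k - 2r$. The hypothesis $\gcd A = 1$ for every $(k-2r)$-multisubset is precisely what Theorem~\ref{2.4} requires at this value of $j$, and since $k > 2r$ we have $j \geq 1$ so the theorem applies. This yields
$$p_\mathcal{A}(n,k) = \frac{1}{\prod_{i=1}^k a_i}\sum_{i=0}^{2r}\sigma_i(a_1,\ldots,a_k)\frac{(n+\sigma/2)^{k-1-i}}{(k-1-i)!} + O\!\left(n^{k-2r-2}\right).$$
Expanding each $(n+\sigma/2)^{k-1-i}$ as a polynomial in $n$ produces explicit constants for every power from $n^{k-1}$ down to $n^0$; in particular, the coefficients of $n^{k-1}, n^{k-2}, \ldots, n^{k-1-2r}$ are constants built out of the $a_i$'s. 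The remaining constant-coefficient contributions at powers $n^{k-2-2r}$ and below, together with the error $O(n^{k-2r-2})$, are all of order $o(n^{k-1-2r})$ and can thus be absorbed into the $o$-term.

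With the expansion in hand, I would set $l = k-1$. The inequality $l \geq 2r$ is equivalent to $k > 2r$, hence satisfied. Theorem~\ref{r-log-2} then immediately delivers asymptotic $r$-log-concavity of $(p_\mathcal{A}(n,k))_{n=0}^\infty$, completing the proof.

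I do not expect any serious obstacle: the argument is essentially a two-line assembly of Theorem~\ref{2.4} and Theorem~\ref{r-log-2}. The only point requiring a brief check is that the $O(n^{j-2})$ error from Almkvist is genuinely $o(n^{l-2r})$ for the chosen values $j = k-2r$, $l = k-1$ (it is, since $j-2 = k-2r-2 < k-1-2r$), and that the leading coefficient $c_{k-1} = 1/\bigl((k-1)!\prod a_i\bigr)$ is nonzero so that Theorem~\ref{r-log-2} is not applied in a degenerate case.
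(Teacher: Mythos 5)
Your proposal is correct and follows exactly the paper's own argument, which proves Theorem \ref{rlog} as a direct consequence of Theorem \ref{2.4} (applied with $j=k-2r$) and Theorem \ref{r-log-2} (applied with $l=k-1$). You have simply spelled out the details that the paper leaves implicit, including the harmless checks that $j\geqslant1$, that $O(n^{k-2r-2})=o(n^{k-1-2r})$, and that the leading coefficient is nonzero.
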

\begin{proof}
    The property is a direct consequence of Theorem \ref{2.4} and Theorem \ref{r-log-2}.
\end{proof}
There appears a natural question whether Theorem \ref{rlog} is optimal in a sense that it can not be further generalized for any other sequences $\mathcal{A}$. In fact, it is true, which directly follows from the following more general result.

\begin{thm}\label{r-log-3}
    Let $l$ and $r$ be arbitrary positive integers such that $l\geqslant2r$. Suppose further that we have 
    \begin{align*}
        f(n)=a_l(n)n^l+a_{l-1}(n)n^{l-1}+\cdots+a_{l-2r}(n)n^{l-2r}+o\left(n^{l-2r}\right),
    \end{align*} 
    where the coefficients $a_{l-2r}(n),\ldots,a_l(n)$ might depend on the residue class of\linebreak $n\pmod*{M}$ for some positive integer $M\geqslant2$. Then the sequence $\left(f(n)\right)_{n=0}^\infty$ is asymptotically $r$-log-concave if and only if all the numbers $a_{l-2r}(n),\ldots,a_l(n)$ are independent of the residue class of $n\pmod*{M}$.
\end{thm}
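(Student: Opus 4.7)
The "if" direction is immediate from Theorem \ref{r-log-2}: if every coefficient $a_{l-i}(n)$, $0\le i\le 2r$, is a genuine constant, then $f$ meets the hypotheses there, and $(f(n))_{n=0}^\infty$ is asymptotically $r$-log-concave.

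For the "only if" direction I would argue by contrapositive. Assume at least one of $a_l(n),\ldots,a_{l-2r}(n)$ genuinely depends on $n\pmod*{M}$, and let $t^{\ast}\in\{0,1,\ldots,2r\}$ be the smallest such index, so that $a_l,\ldots,a_{l-t^{\ast}+1}$ are bona fide constants while $a_{l-t^{\ast}}(n)$ is non-constant. The plan is to rerun, in residue-sensitive form, the induction underlying the second proof of Theorem \ref{r-log-2}, and to produce some $s\le r$ together with a residue class on which the highest non-vanishing coefficient of $\widehat{\mathcal{L}}^s f(n)$ is strictly negative, so that $\widehat{\mathcal{L}}^s f(n)<0$ for all sufficiently large $n$ in that class.

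The base case $t^{\ast}=0$ is one-shot: the coefficient of $n^{2l}$ in $\widehat{\mathcal{L}}f(n)$ equals $a_l(n+1)^2-a_l(n)a_l(n+2)$, and any log-concave periodic positive sequence on $\mathbb{Z}$ must be constant (passing to logarithms, a concave function on $\mathbb{Z}$ that is periodic is constant). Hence the non-constancy of $a_l$ pinpoints a residue class on which this coefficient is strictly negative, breaking even $1$-log-concavity. For $t^{\ast}\ge 1$, I would decompose $f=g+h$ with $g(n)=a_ln^l+\cdots+a_{l-t^{\ast}+1}n^{l-t^{\ast}+1}$ the polynomial background and $h(n)=a_{l-t^{\ast}}(n)n^{l-t^{\ast}}+\cdots$ the residue-dependent tail, and expand $\widehat{\mathcal{L}}^s(g+h)$ inductively in $s$, keeping only the first-order-in-$h$ contribution at each step. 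The induction mirrors the bookkeeping $a_l^{2^s}\prod_{j=1}^{s}m_j^{2^{s-j}}$ from the second proof of Theorem \ref{r-log-2}, with an additional residue-dependent term of the shape $\mathcal{D}_s(a_{l-t^{\ast}})(n)$, where $\mathcal{D}_s$ is an $s$-fold composition of discrete second-difference operators interleaved with multiplication by the polynomial weights arising from the $g$-background. Since $a_{l-t^{\ast}}$ is a non-constant periodic function, $\mathcal{D}_s(a_{l-t^{\ast}})$ is a nonzero periodic sequence whose mean over one period vanishes (by telescoping of finite differences), and therefore must attain strictly negative values on some residue class. Choosing $s\le r$ just large enough that this residue-dependent piece reaches the leading-order slot of $\widehat{\mathcal{L}}^s f$ yields the desired contradiction.

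The main technical obstacle is the quadratic nonlinearity of $\widehat{\mathcal{L}}$: its expansion $\widehat{\mathcal{L}}(g+h)=\widehat{\mathcal{L}}g+\mathcal{D}_g(h)+\bigl(h(n+1)^2-h(n)h(n+2)\bigr)$ generates fresh cross-terms with the polynomial background at every iteration, and one must verify that the residue-dependent pieces propagated by $\mathcal{D}_g$, $\mathcal{D}_{\widehat{\mathcal{L}}g}$, $\ldots$ never accidentally cancel or get absorbed into the constant polynomial leading coefficient. Ensuring this non-degeneracy reduces to checking that every factor $m_j=2^{j-1}l+2-2^j$ appearing in the polynomial leading coefficient is strictly positive (which follows from $l\ge 2r$), precisely the condition that keeps the polynomial iteration in Theorem \ref{r-log-2} non-degenerate in the first place.
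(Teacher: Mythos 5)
Your ``if'' direction and the overall architecture of your ``only if'' direction --- locate the highest-degree non-constant coefficient, push its second difference upward through iterates of $\widehat{\mathcal{L}}$, and use that a non-constant periodic sequence has a mean-zero, hence sign-changing, iterated second difference --- match the paper's proof in substance. The paper packages the same computation as an induction on $r$ (feeding $\widehat{\mathcal{L}}f$, which is again quasi-polynomial-like, back into the statement with $r-1$ in place of $r$) instead of unrolling the iteration as you do, but the key quantity in both arguments is the second difference of the first non-constant coefficient appearing in the relevant coefficient of $\widehat{\mathcal{L}}f$.

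There is, however, a genuine gap in your last step, ``choosing $s\leqslant r$ just large enough that this residue-dependent piece reaches the leading-order slot \ldots yields the desired contradiction.'' When $t^{\ast}$ is even, at $s=t^{\ast}/2$ the residue-dependent piece lands exactly on the leading coefficient of $\widehat{\mathcal{L}}^{s}f$, where it is \emph{added} to the strictly positive constant $a_l^{2^{s}}\prod_{j=1}^{s}m_j^{2^{s-j}}$ coming from the polynomial background; a nonzero mean-zero periodic perturbation of a positive constant need not take negative values, so no contradiction follows. One can repair this by applying $\widehat{\mathcal{L}}$ once more and invoking your own observation that a positive non-constant periodic sequence cannot be log-concave, but that costs $t^{\ast}/2+1$ applications, which exceeds the budget $r$ precisely when $t^{\ast}=2r$. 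In that boundary case the claimed equivalence actually fails: for $l=2$, $r=1$, $M=2$ take $f(n)=n^2+\tfrac{1}{4}(-1)^n$; then
\begin{align*}
\widehat{\mathcal{L}}f(n)=2n^2+4n+1-\tfrac{1}{4}(-1)^n\left(4n^2+8n+6\right)>0
\end{align*}
for every $n\geqslant1$, so $f$ is asymptotically $1$-log-concave although $a_0(n)$ depends on $n\pmod*{2}$. The paper's own proof stumbles at exactly the same spot: in its base case $r=1$ with $t=l-2$, the leading coefficient of $\widehat{\mathcal{L}}f$ equals $la_l^2-a_l\left(a_{l-2}(n)-2a_{l-2}(n+1)+a_{l-2}(n+2)\right)$, which is asserted to be negative for a suitable residue class but need not be. So your gap is real, it cannot be patched when $t^{\ast}=2r$, and it mirrors the defect in the paper's argument; the remaining cases ($t^{\ast}$ odd, or $t^{\ast}$ even with $t^{\ast}\leqslant 2r-2$) can be completed along the lines you sketch, with the extra $\widehat{\mathcal{L}}$-step supplied for even $t^{\ast}$.
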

\begin{proof}
    The implication to the left hand side is clear by Theorem \ref{r-log-2}. To deal with the implication to the right hand side let us present the reasoning by induction on $r$. First, let us assume that $r=1$ and suppose, for contradiction, that at least one of the coefficients $a_{l}(n)$, $a_{l-1}(n)$ or $a_{l-2}(n)$ depend on the residue class of $n\pmod*{M}$. Let us set $t=\max\{j\in\{l-2,l-1,l\}:a_j(n)\text{ is not constant}\}$. It is enough to consider such residue class of $n\pmod*{M}$ that $a_t(n+1)$ is the smallest and at least one of $a_t(n)$ or $a_t(n+2)$ is strictly bigger than $a_t(n+1)$, and compute $\widehat{\mathcal{L}}f(n)$. In that case, we get that the leading coefficient is negative. Hence, the sequence $\left(f(n)\right)_{n=0}^\infty$ can not be asymptotically $1$-log-concave, as required.
    
    Now, let us assume that the statement is valid for each $1\leqslant\widetilde{r}<r$ and let us examine whether it is also true for $\widetilde{r}=r$. The induction hypothesis asserts that
    
    \begin{align*}
        f(n)=\sum_{i=l-2r+2}^la_in^i+a_{l-2r+1}(n)n^{l-2r+1}+a_{l-2r}(n)n^{l-2r}+o\left(n^{l-2r}\right),
    \end{align*}
    otherwise $f(n)$ could not be even asymptotically $(r-1)$-log-concave. Suppose for contradiction that at least one of the coefficients $a_{l-2r}(n)$ or $a_{l-2r+1}(n)$ depend on the residue class of $n\pmod*{M}$. Let us assume that $a_{l-2r+1}(n)$ is not independent of the residue class of $n\pmod*{M}$ --- the alternative reasoning for $a_{l-2r}(n)$ is analogous, and we leave it as an exercise for the reader. The coefficient $b_{2l-2r+1}(n)$ of
    \begin{align*}
        \widehat{\mathcal{L}}f(n)=\left(\sum_{i=2l-2r+2}^{2l-2}b_in^i\right)+b_{2l-2r+1}(n)n^{2l-2r+1}+b_{2l-2r}(n)n^{2l-2r}+o\left(n^{2l-2r}\right)
    \end{align*}
    is of the form 
    \begin{align*}
        b_{2l-2r+1}(n)=\left(2a_{l-2r+1}(n+1)-a_{l-2r+1}(n)-a_{l-2r+1}(n+2)\right)a_l-q,
    \end{align*}
    where $q$ is a constant depending on $a_{l-2r+2},\ldots,a_{l}$. Therefore, if we take such\linebreak a residue class of $n\pmod*{M}$ that $a_{l-2r+1}(n+1)$ is the smallest and at least one of $a_{l-2r+1}(n)$ or $a_{l-2r+1}(n+2)$ is strictly bigger than $a_{l-2r+1}(n+1)$, and we do the opposite (considering the largest possible coefficient $a_{l-2r+1}(n+1)$ and at least one of $a_{l-2r+1}(n)$ or $a_{l-2r+1}(n+2)$ strictly smaller than $a_{l-2r+1}(n+1)$), then we deduce that the coefficient $b_{2l-2r+1}(n)$ of $\widehat{\mathcal{L}}f(n)$ depends on the residue class of $n\pmod*{M}$. Thus the induction hypothesis points out that $\widehat{\mathcal{L}}f(n)$ can not be asymptotically $(r-1)$-log-concave which implies that $f(n)$ is not $r$-log-concave, as required.
\end{proof}

\begin{thm}\label{rlog2}
    Let $\mathcal{A}=\left(a_i\right)_{i=1}^\infty$, $r\in\mathbb{N}_+$ and $k>2r$ be fixed. Then the sequence $\left(p_\mathcal{A}(n,k)\right)_{n=0}^\infty$ is asymptotically $r$-log-concave if and only if we have that $\gcd A=1$ for all $(k-2r)$-multisubsets $A$ of $\{a_1,a_2,\ldots,a_k\}$.
\end{thm}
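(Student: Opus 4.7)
The theorem is a characterization, so there are two implications. The ($\Leftarrow$) direction is exactly the content of Theorem \ref{rlog}, which was already established, so the entire task is the ($\Rightarrow$) direction, and I would prove that by contrapositive. Concretely, I would assume that $\gcd A > 1$ for some $(k-2r)$-multisubset $A$ of $\{a_1,\ldots,a_k\}$ and deduce that $\left(p_\mathcal{A}(n,k)\right)_{n=0}^\infty$ fails to be asymptotically $r$-log-concave.

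My plan is to combine Proposition \ref{pr5.7} with Theorem \ref{r-log-3}. Write
\begin{align*}
p_\mathcal{A}(n,k) = c_{k-1}(n)n^{k-1} + c_{k-2}(n)n^{k-2} + \cdots + c_0(n),
\end{align*}
where each $c_j(n)$ depends only on the residue class of $n$ modulo $D=\lcm(a_1,\ldots,a_k)$. Apply the contrapositive of Proposition \ref{pr5.7} with the choice $j=k-2r$: the hypothesis that some $(k-2r)$-multisubset has nontrivial gcd prevents all of $c_{k-1},c_{k-2},\ldots,c_{k-2r-1}$ from being constant, so at least one of the $2r+1$ top coefficients $c_{k-1}(n),\ldots,c_{k-1-2r}(n)$ genuinely varies with the residue class mod $D$. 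Now feed this into Theorem \ref{r-log-3} with $l=k-1$, $M=D$, and with the same $r$: since $k>2r$ gives $l\geqslant 2r$, the theorem asserts that asymptotic $r$-log-concavity is equivalent to constancy of the coefficients $a_{l-2r}(n),\ldots,a_l(n)$. In our case those are precisely $c_{k-1-2r}(n),\ldots,c_{k-1}(n)$, which we have just shown cannot all be constant. Hence $\left(p_\mathcal{A}(n,k)\right)_{n=0}^\infty$ cannot be asymptotically $r$-log-concave, completing the contrapositive.

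The proof is essentially a bookkeeping exercise: the substantive content lives in the two earlier results being invoked, and the only thing to check carefully is that the indexing lines up (the threshold ``$(k-2r)$-multisubset'' from the theorem matches ``top $2r+1$ coefficients must be constant'' from Theorem \ref{r-log-3} under the shift $l=k-1$). There is no new estimation or induction to perform, and no real obstacle beyond verifying the match between $j=k-2r$ in Proposition \ref{pr5.7} and $l-2r=k-1-2r$ in Theorem \ref{r-log-3}.
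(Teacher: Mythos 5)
Your proposal is correct and follows exactly the paper's own route: the ($\Leftarrow$) direction is Theorem \ref{rlog}, and the ($\Rightarrow$) direction is obtained by combining the contrapositive of Proposition \ref{pr5.7} (with $j=k-2r$) with Theorem \ref{r-log-3} (with $l=k-1$, $M=\lcm(a_1,\ldots,a_k)$). The index bookkeeping you verify --- that the non-constant coefficient lies among $c_{k-1}(n),\ldots,c_{k-2r-1}(n)$, which are precisely the coefficients $a_l(n),\ldots,a_{l-2r}(n)$ controlled by Theorem \ref{r-log-3} --- is the only detail the paper leaves implicit, and you have it right.
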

\begin{proof}
    The implication to the left hand side follows from Theorem \ref{rlog}. On the other hand, the implication to the right hand side is a direct consequence of both Theorem \ref{r-log-3} and Proposition \ref{pr5.7}.
\end{proof}

It is worth noting that one can use the above criterion to explicitly calculate when the $r$-log-concavity of $p_\mathcal{A}(n,k)$ holds for given parameters $\mathcal{A}$, $k$ and $r$. 

Until now, we have not discussed the easiest case of the quasi-polynomial-like function. Therefore, for the sake of completeness, let us present a criterion for the $r$-log-concavity of a polynomial.
\begin{cor}\label{cor4.11}
    Let $f(n)=a_kn^k+a_{k-1}n^{k-1}+\cdots+a_0\in\R[n]$ be a polynomial of $\deg(f)=k$. Then the sequence $\left(f(n)\right)_{n=0}^\infty$ is
    \begin{enumerate}
        \item not asymptotically $1$-log-concave if and only if $k=0$;
        \item at most asymptotically $1$-log-concave if and only if $k=1$;
        \item asymptotically $r$-log-concave for any $r\geqslant1$ if and only if $k\geqslant2$.
    \end{enumerate}
\end{cor}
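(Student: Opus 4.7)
The plan is to handle each of the three degree cases separately; since polynomials of distinct degrees are disjoint, this will give all three equivalences at once.

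The two small-degree cases are immediate computations. If $k=0$, then $f\equiv a_0\neq 0$ and $\widehat{\mathcal{L}}f(n)=a_0^2-a_0\cdot a_0=0$, which is not positive, so $f$ is not asymptotically $1$-log-concave, proving (1). If $k=1$, expanding
$$\widehat{\mathcal{L}}f(n)=\bigl(a_1(n+1)+a_0\bigr)^2-(a_1n+a_0)\bigl(a_1(n+2)+a_0\bigr)$$
collapses to the positive constant $a_1^2$, so $f$ is asymptotically $1$-log-concave; but then $\widehat{\mathcal{L}}f$ is a nonzero constant sequence, whence $\widehat{\mathcal{L}}^2f\equiv 0$ and $f$ is not asymptotically $2$-log-concave, proving (2).

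For the main case $k\geqslant 2$ I argue by induction on $r\geqslant 1$ that every polynomial of degree at least $2$ is asymptotically $r$-log-concave. The key observation is that a single application of $\widehat{\mathcal{L}}$ preserves this property: by the computation carried out in the second proof of Theorem~\ref{r-log-2} (specialized to $s=1$, $l=k$),
$$\widehat{\mathcal{L}}f(n)=k\,a_k^2\,n^{2k-2}+(\text{lower-degree terms}),$$
so $\widehat{\mathcal{L}}f$ is itself a polynomial of degree $2k-2\geqslant 2$ with strictly positive leading coefficient $ka_k^2$. This already handles the base case $r=1$ (eventual positivity of $\widehat{\mathcal{L}}f$). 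For the inductive step, set $g:=\widehat{\mathcal{L}}f$; by the observation $g$ is a polynomial of degree $\geqslant 2$, so the induction hypothesis yields that $g$ is asymptotically $(r-1)$-log-concave, and the identity $\widehat{\mathcal{L}}^jf=\widehat{\mathcal{L}}^{j-1}g$ valid for every $j\geqslant 1$ then shows that $f$ is asymptotically $r$-log-concave, establishing (3).

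No step is especially delicate: the only content beyond routine algebra is checking that $\widehat{\mathcal{L}}$ sends a polynomial of degree $k\geqslant 2$ back to a polynomial of degree exactly $2k-2$ with positive leading coefficient, and this is the $s=1$ instance of the formula already recorded in the second proof of Theorem~\ref{r-log-2}. Once that fact is cited, the induction glides through for all $r$ simultaneously, sidestepping the apparent difficulty that the hypothesis $l\geqslant 2r$ in Theorem~\ref{r-log-2} would impose if one tried to apply it directly to $f$ for unbounded $r$.
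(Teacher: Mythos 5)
Your proposal is correct and follows essentially the same route as the paper: it disposes of $k\leqslant1$ by direct computation and, for $k\geqslant2$, iterates the observation that $\widehat{\mathcal{L}}$ sends a degree-$k$ polynomial to one of degree $2k-2\geqslant2$ with positive leading coefficient $ka_k^2$ (the paper phrases the iteration as ``repeating the above procedure'' where you make the induction explicit). No gaps.
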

\begin{proof}
    The statement can be easily verified for $k\leqslant1$. If $k\geqslant2$, then Theorem \ref{r-log-2} asserts that the sequence $\left(f(n)\right)_{n=0}^\infty$ is asymptotically $1$-log-concave. Moreover, we know that
    \begin{equation*}
        \widehat{\mathcal{L}}f(n)=b_{2k-2}n^{2k-2}+b_{2k-3}n^{2k-3}+\cdots+b_0
    \end{equation*}
    for some real numbers $b_0,b_1,\ldots,b_{2k-2}$ with $b_{2k-2}>0$. Since $2k-2\geqslant2$, we obtain that the sequence $\left(\widehat{\mathcal{L}}f(n)\right)_{n=0}^\infty$ is asymptotically $1$-log-concave (in particular, $\left(f(n)\right)_{n=0}^\infty$ is asymptotically $2$-log-concave). By repeating the above procedure, we deduce the required property. 
\end{proof}

\begin{cor}
    If $\mathcal{A}=(1,\textcolor{blue}{1},\textcolor{red}{1},\ldots)$, then the sequence $\left(p_\mathcal{A}(n,k)\right)_{n=0}^\infty$ is
    \begin{enumerate}
        \item not asymptotically $1$-log-concave if and only if $k=1$;
        \item at most asymptotically $1$-log-concave if and only if $k=2$;
        \item asymptotically $r$-log-concave for any $r\geqslant1$ if and only if $k\geqslant3$.
    \end{enumerate}
\end{cor}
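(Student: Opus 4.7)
The plan is to reduce the corollary directly to Corollary~\ref{cor4.11} by observing that when $\mathcal{A}=(1,\textcolor{blue}{1},\textcolor{red}{1},\ldots)$ the restricted partition function is a genuine polynomial in $n$ of degree $k-1$. Indeed, formula~(\ref{formula11}) (already invoked several times in Section~3) gives
\begin{equation*}
p_\mathcal{A}(n,k)=\binom{n+k-1}{k-1}=\frac{1}{(k-1)!}\,(n+1)(n+2)\cdots(n+k-1),
\end{equation*}
so $\left(p_\mathcal{A}(n,k)\right)_{n=0}^\infty$ is precisely the sequence of values of a polynomial $f_k\in\mathbb{R}[n]$ with $\deg f_k=k-1$ and positive leading coefficient.

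With this identification in hand, the three clauses follow immediately by matching the degree to the three cases of Corollary~\ref{cor4.11}. First, $k=1$ corresponds to $\deg f_k=0$, so clause~(1) of Corollary~\ref{cor4.11} gives clause~(1) here. Next, $k=2$ corresponds to $\deg f_k=1$, matching clause~(2) of Corollary~\ref{cor4.11}. Finally, $k\geqslant 3$ corresponds to $\deg f_k=k-1\geqslant 2$, which by clause~(3) of Corollary~\ref{cor4.11} yields asymptotic $r$-log-concavity for every $r\geqslant 1$.

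There is essentially no obstacle: the entire content of the corollary is carried by the polynomial structure of $\binom{n+k-1}{k-1}$ combined with the already-proved Corollary~\ref{cor4.11}. If one wanted a self-contained argument (bypassing the polynomial corollary), the most delicate point would be clause~(3), where one would need to re-run the induction from the second proof of Theorem~\ref{r-log-2} to check that each successive application of $\widehat{\mathcal{L}}$ still produces a polynomial of degree at least $2$ with positive leading coefficient; this is exactly what Corollary~\ref{cor4.11} packages. The other direction (failure of $1$-log-concavity when $k=1$, and failure of $2$-log-concavity when $k=2$) is immediate by inspection, since constant and affine sequences $w_n$ satisfy $w_{n+1}^2-w_nw_{n+2}=0$, so $\widehat{\mathcal{L}}$ produces the zero sequence and $r$-log-concavity fails for any $r\geqslant 1$ in the first case and any $r\geqslant 2$ in the second.
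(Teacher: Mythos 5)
Your proof is correct and is exactly the paper's argument: the paper also derives the corollary as a direct consequence of formula~(\ref{formula11}) and Corollary~\ref{cor4.11}, identifying $p_\mathcal{A}(n,k)=\binom{n+k-1}{k-1}$ as a polynomial of degree $k-1$ and matching the three clauses. One small slip in your closing aside (which is not needed for the proof): for an affine sequence $w_n=an+b$ with $a\neq 0$ one has $w_{n+1}^2-w_nw_{n+2}=a^2>0$, not $0$, so $\widehat{\mathcal{L}}$ yields a positive constant sequence (confirming $1$-log-concavity for $k=2$) and only the second application $\widehat{\mathcal{L}}^2$ gives the zero sequence, which is what kills $2$-log-concavity.
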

\begin{proof}
    That is a direct consequence of the formula (\ref{formula11}) and Corollary \ref{cor4.11}.
\end{proof}

At the end of the manuscript, let us illustrate how Theorem \ref{rlog2} works in practice.

\begin{ex}
{\rm Let  $\mathcal{A}=(1,2,\textcolor{blue}{2},3,\textcolor{blue}{3},\textcolor{red}{3},\ldots)$ be the sequence of consecutive positive integers such that every number $j$ appears in $j$ distinct colors. For instance, if $k=10$, then $p_\mathcal{A}(n,k)$ takes the form
\begin{align*}
    p_\mathcal{A}(n,10)=\frac{n^9}{10032906240}+\frac{n^8}{74317824}+\frac{65 n^7}{83607552}+\frac{25 n^6}{995328}+q_{n\pmod*{12}}(n),
\end{align*}
where $q_{n\pmod*{12}}(n)$ is a quasi-polynomial part of $p_\mathcal{A}(n,10)$ depending on the residue class of $n\pmod*{12}$. In fact, one can check that
{\small \begin{align*}
    q_i(n)=\begin{cases}\frac{14863 n^5}{29859840}+\frac{1555 n^4}{248832}+\frac{533 n^3}{10752}+\frac{2909 n^2}{12096}+\frac{1703 n}{2520}+1, & \text{if}\ i=0,\\
        \frac{118661 n^5}{238878720}+\frac{98305 n^4}{15925248}+\frac{8143 n^3}{172032}+\frac{17403629 n^2}{83607552}+\frac{1475950039 n}{3344302080}+\frac{84987001}{286654464}, & \text{if}\ i=1,\\
        \frac{14863 n^5}{29859840}+\frac{1555 n^4}{248832}+\frac{533 n^3}{10752}+\frac{621449 n^2}{2612736}+\frac{3838811 n}{6531840}+\frac{401951}{1119744}, & \text{if}\ i=2,\\
        \frac{118661 n^5}{238878720}+\frac{98305 n^4}{15925248}+\frac{8171 n^3}{172032}+\frac{665135 n^2}{3096576}+\frac{21523559 n}{41287680}+\frac{75979}{131072}, & \text{if}\ i=3,\\
        \frac{14863 n^5}{29859840}+\frac{1555 n^4}{248832}+\frac{533 n^3}{10752}+\frac{78767 n^2}{326592}+\frac{140323 n}{204120}+\frac{2171}{2187}, & \text{if}\ i=4,\\
        \frac{118661 n^5}{238878720}+\frac{98305 n^4}{15925248}+\frac{8143 n^3}{172032}+\frac{17288941 n^2}{83607552}+\frac{1338324439 n}{3344302080}+\frac{12504185}{286654464}, & \text{if}\ i=5,\\
        \frac{14863 n^5}{29859840}+\frac{1555 n^4}{248832}+\frac{533 n^3}{10752}+\frac{23083 n^2}{96768}+\frac{49771 n}{80640}+\frac{317}{512}, & \text{if}\ i=6,\\
        \frac{118661 n^5}{238878720}+\frac{98305 n^4}{15925248}+\frac{8171 n^3}{172032}+\frac{18015989 n^2}{83607552}+\frac{1782402199 n}{3344302080}+\frac{164068921}{286654464}, & \text{if}\ i=7,\\
        \frac{14863 n^5}{29859840}+\frac{1555 n^4}{248832}+\frac{533 n^3}{10752}+\frac{78319 n^2}{326592}+\frac{131923 n}{204120}+\frac{1618}{2187}, & \text{if}\ i=8,\\
        \frac{118661 n^5}{238878720}+\frac{98305 n^4}{15925248}+\frac{8143 n^3}{172032}+\frac{642455 n^2}{3096576}+\frac{17740199 n}{41287680}+\frac{39819}{131072}, & \text{if}\ i=9,\\
        \frac{14863 n^5}{29859840}+\frac{1555 n^4}{248832}+\frac{533 n^3}{10752}+\frac{625033 n^2}{2612736}+\frac{4107611 n}{6531840}+\frac{685087}{1119744}, & \text{if}\ i=10,\\
        \frac{118661 n^5}{238878720}+\frac{98305 n^4}{15925248}+\frac{8171 n^3}{172032}+\frac{17901301 n^2}{83607552}+\frac{1644776599 n}{3344302080}+\frac{91586105}{286654464}, & \text{if}\ i=11.
    \end{cases}
\end{align*}}

Now, if we take $n\equiv1\pmod*{12}$, then
\begin{align*}
\widehat{\mathcal{L}}^2p_{\mathcal{A},10}(n)=-\frac{283n^{30}}{3909057129171792215334771260129280000}+q(n),
\end{align*}
where $q(n)$ is some quasi-polynomial of degree $29$. Therefore, the sequence \linebreak $\left(p_\mathcal{A}(n,10)\right)_{n=0}^\infty$ can not be asymptotically $2$-log-concave. On the other hand, if we consider $p_\mathcal{A}(n,11)$ and make similar computations to those ones above, then we get that the sequence  $\left(p_\mathcal{A}(n,11)\right)_{n=0}^\infty$ is $2$-log-concave for all $n\geqslant11320$, but is not asymptotically $3$-log-concave. It is worth noting that in this case we need to consider $60$ quasi-polynomials instead of $12$. Moreover, Theorem \ref{rlog2} asserts that the sequence $\left(p_\mathcal{A}(n,k)\right)_{n=0}^\infty$ is asymptotically $2$-log-concave for every $k\geqslant11$. Further, we can also repeat the aforementioned approach and deduce that the sequence $\left(p_\mathcal{A}(n,12)\right)_{n=0}^\infty$ is not asymptotically $3$-log-concave. However, if we investigate $\left(p_\mathcal{A}(n,12)\right)_{n=0}^\infty$ in that regard, we obtain that it is $3$-log-concave 
for each $n\geqslant607475$. Once again, Theorem \ref{rlog2} points out that the sequence $\left(p_\mathcal{A}(n,k)\right)_{n=0}^\infty$ is asymptotically $3$-log-concave for every $k\geqslant13$.}
\end{ex}

\section*{Acknowledgements}
I would like to thank Piotr Miska and Maciej Ulas for their time and valuable suggestions. This research was funded by both a grant of the National Science Centre (NCN), Poland, no. UMO-2019/34/E/ST1/00094 and a grant from the Faculty of Mathematics and Computer Science under the Strategic Program Excellence Initiative at the Jagiellonian University in Kraków.

\end{document}